\documentclass[11pt,reqno]{amsart}
\usepackage[margin=1in]{geometry}
\usepackage{amssymb,mathtools,mathrsfs,enumitem}
\usepackage[hidelinks]{hyperref}
\newtheorem{theorem}{Theorem}[section]
\newtheorem{lemma}[theorem]{Lemma}
\newtheorem{proposition}[theorem]{Proposition}
\newtheorem{corollary}[theorem]{Corollary}
\newcommand{\F}{\mathbf F}
\DeclareMathOperator{\GL}{GL}
\DeclareMathOperator{\Aut}{Aut}
\DeclareMathOperator{\Inn}{Inn}
\DeclareMathOperator{\Out}{Out}
\DeclareMathOperator{\Hom}{Hom}
\DeclareMathOperator{\im}{im}
\DeclareMathOperator{\rank}{rank}
\DeclareMathOperator{\cl}{cl}
\title{Almost all finite groups are $2$-groups of class two}
\author{Ning Su}
\author{Binzhou Xia}
\date{}

\begin{document}

\begin{abstract}
We prove that almost all finite groups are $2$-groups of nilpotency class two, when groups of order at most $x$ are counted up to isomorphism and $x\to\infty$. More precisely, uniformly for $2^m\leq x<2^{m+1}$, all but an $O(2^{-cm^2})$ proportion have order $2^m$ and class two, for some absolute constant $c>0$. For each fixed prime $p$, we give a formula for the number of groups of order $p^m$, with relative error $O(p^{-m/3})$ as $m\to\infty$. This refines the classical logarithmic estimates for the number of $p$-groups. For every fixed prime $p$ and sufficiently large $m$, we also prove that all but a $p^{-m^2/300}$ proportion of groups of order $p^m$ have central elementary abelian Frattini subgroup. For fixed $p$, the distribution of the minimum number of generators among groups of order $p^m$ is asymptotically supported on one or two adjacent values, with explicit probabilities depending on $m$ modulo three.
\end{abstract}

\maketitle

\section{Introduction}\label{sec:introduction}

The enumeration of finite groups raises the fundamental question of what a typical finite group looks like.
Throughout, groups are counted up to isomorphism.
Let $f(n)$ be the number of isomorphism classes of groups of order $n$, and let
\[
F(x)=\sum_{1\leq n\leq x}f(n).
\]
A central problem in the enumeration of finite groups is to understand the asymptotic properties of the isomorphism classes counted by $F(x)$.

The rapid growth of $p$-groups provides important evidence for the expected behavior of finite-group enumeration.
The estimates of Higman~\cite{Higman1960} and Sims~\cite{Sims1965}, with the refinement of Newman and Seeley (see~\cite[Theorem~5.8]{BNV2007}), give, for each fixed prime $p$,
\begin{equation}\label{eq:HigmanSims}
\log_p f(p^m)=\frac{2m^3}{27}+O(m^{\frac{5}{2}}).
\end{equation}
Together with general upper bounds for the number of groups of a given order in \cite{Pyber1993} and \cite[Chapter~16]{BNV2007}, the estimates for $f(p^m)$ support the conjecture that almost all finite groups have prime-power order, and more specifically are $2$-groups.
A formally weaker question, posed in \cite[Question~1]{Mann1999} and \cite[Question~22.19]{BNV2007}, is whether almost all finite groups are nilpotent, namely whether
\[
\frac{\#\{G:\ |G|\leq x,\ G\text{ nilpotent}\}}{F(x)}
\longrightarrow1
\]
as $x\to\infty$.

We prove the stronger statement that almost all finite groups are $2$-groups of nilpotency class two.
More precisely, almost all finite groups have the largest power-of-two order allowed by the bound $x$.

\begin{theorem}\label{thm:main}
There is an absolute constant $c>0$ such that, uniformly for $2^m\leq x<2^{m+1}$ as $m\to\infty$,
\begin{equation}\label{eq:dyadic-asymptotic}
F(x)=f(2^m)\bigl(1+O(2^{-cm^2})\bigr).
\end{equation}
Moreover, all but an $O(2^{-cm^2})$ proportion of the isomorphism classes counted by $F(x)$ have order $2^m$ and nilpotency class two.
\end{theorem}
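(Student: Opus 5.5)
We will split the count $F(x)$ into three parts: groups of order exactly $2^m$ of class two, groups of order $2^m$ of class $\neq 2$, and groups of order $n\neq 2^m$ with $n\leq x<2^{m+1}$. The theorem follows once the last two parts are shown to be $O(2^{-cm^2})f(2^m)$. The basic ingredient is a sharp lower bound for $f(2^m)$, and in fact for the number of class-two groups of order $2^m$. We obtain it from the Higman construction. Take $V=\F_2^d$ and $W=\F_2^{m-d}$ with $d\approx 2m/3$. Every surjection $\Lambda^2V\to W$, more precisely every quotient of the free class-two exponent-four group on $d$ generators, gives a group of order $2^m$ with $\Phi(G)=Z(G)$ elementary abelian. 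The number of isomorphism types is at least the number of subspaces of codimension $m-d$ in the relevant $\F_2$-space divided by $|\GL_d(\F_2)\times\GL_{m-d}(\F_2)|\leq 2^{O(m^2)}$. This yields $\log_2 f(2^m)\geq \frac{2m^3}{27}-O(m^2)$, and the same bound for the class-two subfamily. We will need this refined form with an $O(m^2)$ rather than $O(m^{5/2})$ error; that is exactly the scale that a saving of $2^{-cm^2}$ must beat.

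For orders $n\neq 2^m$ we use the general upper bound $f(n)\leq n^{\frac{2}{27}\mu(n)^2+O(\mu(n)^{3/2})}$ of Pyber, where $\mu(n)$ is the largest exponent in the factorization of $n$ \cite{Pyber1993}. The precise form we cite is the one in \cite[Chapter~16]{BNV2007}, where $f(n)$ is bounded by a product over $p^{k}\| n$ of Sylow counts times $n^{O(\mu^{3/2})}$ or better. If $n<2^{m+1}$ is not a power of $2$, then either $2^k\| n$ with $k\leq m-1$, or $n$ contains an odd prime power $p^j$ with $j\log_2 p\leq m$. In both cases the dominant term $\frac{2}{27}(\log_2 n)\,\mu^2\cdot\log$ drops by order $m^2$ below $\frac{2m^3}{27}$. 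Concretely, for $n=2^k u$ with $u>1$ odd we get $\log_2 f(n)\leq \frac{2k^3}{27}+\frac{2}{27}j^2\log_2(p^j)\cdots$. Since $k\leq m-1$, the loss $\frac{2}{27}(m^3-(m-1)^3)\approx \frac{2m^2}{9}$ exceeds the $O(m^2)$ slack only with constants to spare if the error terms are genuinely $O(m^2)$. Summing over at most $2^{m+1}$ values of $n$ costs only a factor $2^{m+1}$.

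The main obstacle is that the known upper bounds have error $O(m^{5/2})$, which is larger than the $m^2$ gap. We therefore need an upper bound $\log_p f(p^k)\leq \frac{2k^3}{27}+O(k^2)$, uniformly in $p$, and a corresponding bound for general $n$. My plan is to reprove the Sims upper bound more carefully. Here $G$ is generated by $d$ elements, and we count presentations via a Frattini-series or $p$-central-series refinement: choose $d$, then count the possible $\Phi(G)$ data as subspaces with $|\GL|$-type overcounting. The aim is to show that the number of groups with $d$ generators is at most $p^{d^2(k-d)-d(k-d)^2\cdots+O(k^2)}$, maximized at $d=2k/3$. The other primes and the non-nilpotent extensions should contribute only $p^{O(k^2)}$-sized factors, which I would handle using the Pyber framework, since there the extension data are polynomial in $\log n$ squared.

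Finally, for class $\neq 2$ at order $2^m$ (class one, or class at least three), we use the same refined count. A group of class at least three, or one whose $\Phi(G)$ is not central, forces $\gamma_3(G)\neq 1$ or $[\Phi(G),G]\neq1$. This imposes at least roughly $d\geq$ const$\cdot m$ extra linear constraints on the defining subspace of relations, or alternatively shifts the parameters away from the optimum $(d,m-d)=(2m/3,m/3)$. Either effect costs $2^{-cm^2}$. Abelian groups are negligible, since there are only $2^{O(\sqrt m)}$ of them. Combining the three estimates and the lower bound gives \eqref{eq:dyadic-asymptotic} and the class-two statement.
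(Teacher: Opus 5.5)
\textbf{There is a genuine gap: the student's claim that an $O(m^2)$ error suffices is false, and neither of the two estimates that carry the proof is supplied.}

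Write $\lambda(m)=\frac{2m^3}{27}-\frac{4m^2}{9}$. In fact $\log_2 f(2^m)=\lambda(m)+O(m)$, and the margin separating order $2^m$ from order $2^{m-1}$ (or $3\cdot2^{m-1}$) is only $\lambda(m)-\lambda(m-1)=\frac29m^2+O(m)$. An upper bound $\log_2 f(2^k)\leq\frac{2k^3}{27}+Ck^2$ therefore helps only if $C<-\frac29$. So it must recover the negative second-order term $-\frac49k^2$ to within $\frac29k^2$, for all $2$-groups and not just the Higman family. An unspecified $O(k^2)$ cannot beat a gap that is itself $\Theta(m^2)$. As written, your exponent $d^2(k-d)-d(k-d)^2$ is not maximized at $d=2k/3$; the correct leading term is $\frac12d^2(k-d)$.

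``Reproving Sims more carefully'' is not a plan for this. The same gap sits under your class-two paragraph, which needs the number of groups with $\gamma_3^{(2)}(G)\neq1$ to be at most $2^{\lambda(m)-cm^2}$. Such a group is not a quotient of the free class-two exponent-four group cut out by extra linear constraints. It is an iterated central extension of its canonical root $G/\gamma_3^{(2)}(G)$, and a single root can have very many descendants: each layer branches into up to $2^{D_d-s+h}$ allowable lines. The paper controls this with three ingredients: the nucleus $\nu(Q)$ and the weighted root count (Proposition~\ref{prop:weighted-roots}) showing that roots with $\nu(Q)>0$ are rare; cup-product annihilator tails along the extension tree (Proposition~\ref{prop:descendant-count}); and a second-order-sharp root count via the tensor-stabilizer estimate (Proposition~\ref{prop:tensor-stabilizers}). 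Nothing in your sketch substitutes for these.

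The odd-cofactor step has a second, independent gap. For $n=2^{m-r}q$ with $q>1$ odd, \cite[Theorem~16.20]{BNV2007} pays $n^{\kappa\mu(n)+C_B}=2^{\kappa m^2+O(m)}$, with $\kappa=97/4$, for the non-Sylow data. The gain from the smaller $2$-part is about $\frac29rm^2$, and at least $\frac{10}{243}rm^2$ after paying for $f(q)$; the paper uses this route only for $r\geq1000$. For bounded $r$, e.g.\ $n=3\cdot2^{m-1}$, the available saving is exactly $\frac29m^2$. So the ``$p^{O(k^2)}$-sized factors'' you plan to absorb are fatal rather than harmless.

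The missing idea is to work over $N=O_2(G)$, with $|G/N|=2^cq$:
\begin{enumerate}
\item Bound the outer actions up to conjugacy by $2^{10d(N)}|\Aut(N)|^{c+\Omega(q)-1}$, using Mar\'oti's class-number bound \cite{Maroti2004} for the odd-order generator.
\item Bound the extensions per action by $2^{O(m)}$.
\item Prove the moment estimate $\sum_{|N|=2^a}|\Aut(N)|^k=2^{\lambda(a)+\frac{2k}{9}a^2+O(a)}$ (Proposition~\ref{prop:automorphism-moments}).
\end{enumerate}
That estimate rests on typical roots having $|\Aut(N)|=2^{ds+O(1)}$, i.e.\ essentially trivial stabilizer in $\GL(V)\times\GL(W)$ with controlled higher moments (Lemmas~\ref{lem:root-stabilizer-moments}--\ref{lem:stabilizer-weight-sum}). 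Only then does the loss $\frac{2c}{9}a^2$ from the smaller kernel cancel exactly against the weight $|\Aut(N)|^{c}$, leaving the saving $\frac29(r-\Omega(q)+1)m^2\geq\frac29m^2$. Without this, for instance, the groups of order $3\cdot2^{m-1}$ with $G/O_2(G)\cong S_3$ are out of reach. Bounding $|\Aut(N)|$ crudely by $2^{d(m-d)}|\GL(d,2)|$ already overshoots $f(2^m)$ by a factor $2^{\frac29m^2}$.
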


Thus the asymptotic enumeration of finite groups of order up to $x$ is determined by the groups of the largest power-of-two order below $x$.
Equivalently, after decreasing $c$ if necessary,
\begin{equation}\label{eq:cumulative-asymptotic}
F(x)=f(2^{\lfloor\log_2x\rfloor})\bigl(1+O(2^{-c(\log_2x)^2})\bigr).
\end{equation}
In particular, for sufficiently large $m$, there are more groups of order $2^m$ than of all smaller orders combined.
As a consequence of Theorem~\ref{thm:main}, Corollary~\ref{cor:cumulative-domination} gives the quantitative bound
\[
\frac{F(2^m-1)}{f(2^m)}=O(2^{-cm^2})
\]
as $m\to\infty$. For sufficiently large $m$, this proves $f(n)<f(2^m)$ for all $1\leq n<2^m$, which was a conjecture attributed to Erd\H{o}s and to Higman (see~\cite[Question~22.16]{BNV2007}).
It also provides an affirmative answer to the first part of~\cite[Question~22.18]{BNV2007} asking whether $F(2^m-1)<f(2^m)$ for sufficiently large $m$.
Moreover, the second part of~\cite[Question~22.18]{BNV2007} asks whether $F(2^m-1)\sim f(2^{m-1})$ as $m\to\infty$, and applying Theorem~\ref{thm:main} at $x=2^m-1$ gives the stronger estimate
\[
F(2^m-1)=f(2^{m-1})\bigl(1+O(2^{-c(m-1)^2})\bigr).
\]

Theorem~\ref{thm:main} reduces the global enumeration problem to determining the number of $2$-groups, or more generally, groups of prime-power order. The logarithmic estimate~\eqref{eq:HigmanSims} determines only the leading term of $\log_p f(p^m)$, whereas our Theorem~\ref{thm:sharp} below determines $f(p^m)$ itself up to a relative error tending to zero. For a fixed prime $p$, let
\[
P_p(m)=f(p^m)\quad\text{and}\quad g_p=\prod_{i\geq1}(1-p^{-i}).
\]
Unless stated otherwise, implied constants and sufficiently large thresholds may depend on parameters explicitly held fixed.

\begin{theorem}\label{thm:sharp}
Let $p$ be a fixed prime.
As $m\to\infty$,
\[
P_p(m)=M_p(m)\bigl(1+O(p^{-\frac{m}{3}})\bigr),
\]
where
\begin{equation}\label{eq:sharp-main-term}
M_p(m)=
\begin{cases}
p^{\frac{2}{27}m^3-\frac{4}{9}m^2+\frac{m}{3}-2}/g_p^2&\text{if }m\equiv0\pmod3\\
p^{\frac{2}{27}m^3-\frac{4}{9}m^2+\frac{m}{3}-\frac{26}{27}}/g_p^2&\text{if }m\equiv1\pmod3\\
(1+p^{-3})p^{\frac{2}{27}m^3-\frac{4}{9}m^2+\frac{2m}{9}-\frac{7}{27}}/g_p^2&\text{if }m\equiv2\pmod3.
\end{cases}
\end{equation}
\end{theorem}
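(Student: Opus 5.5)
The plan is to show that the count is dominated by groups $G$ of order $p^m$ with $\Phi(G)=G^pG'$ central, elementary abelian and of rank $r$ close to $m/3$. Such a $G$ has $d=m-r$ generators and $G/\Phi(G)\cong\F_p^d$. These groups are in bijection with $\GL_d(\F_p)\times\GL_r(\F_p)$-orbits of surjective data. For $p$ odd the data is a pair consisting of an alternating bilinear map $\Lambda^2\F_p^d\to\F_p^r$ (the commutator) and a linear map $\F_p^d\to\F_p^r$ (the $p$-power map), which together must be surjective. For $p=2$ it is a quadratic map $\F_2^d\to\F_2^r$ whose polar form is the commutator. In either case the data lives in a space of dimension $N(d)=\binom d2+d=\binom{d+1}2$ per coordinate, and the surjectivity is onto $\F_p^r$.

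\textbf{Step 1 (reduction).} I will use the structural result from the abstract: all but a $p^{-m^2/300}$ proportion of groups of order $p^m$ have central elementary abelian Frattini subgroup. That error is far smaller than $p^{-m/3}$, so it suffices to count these groups. The count is therefore $\sum_r \#\{\text{orbits on } \mathrm{Surj}(\F_p^{N(d)},\F_p^r)\text{-type data}\}$, where a surjective linear map $V\to\F_p^r$ from the $N(d)$-dimensional data space is taken modulo $\GL_r$. Equivalently, we count $r$-codimensional subspaces $W$ of the relation module $V=\F_p^{N(d)}$ (namely $\Lambda^2\oplus$ linear, or quadratic forms), taken modulo $\GL_d$.

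\textbf{Step 2 (Burnside / free orbits).} The number of $r$-codimensional subspaces is the Gaussian binomial $\binom{N(d)}{r}_p=p^{r(N(d)-r)}\prod\cdots$. This behaves like $p^{r(N-r)}/g_p$ times a factor $1+O(p^{-(N-r)})$ once $r$ and $N-r$ are both large. Dividing by $|\GL_d(\F_p)|=p^{d^2}g_p\,(1+O(p^{-d}))$, together with the scalar kernel, gives the expected orbit count. I will show that all but a negligible proportion of subspaces have trivial stabiliser in $\GL_d$ modulo the scalars acting trivially. For $\Lambda^2$, scalars $\lambda$ act by $\lambda^2$ and the linear part by $\lambda$, so only the identity acts trivially. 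To bound the fixed-point contribution of each $g\ne 1$, I will use Burnside's lemma: $\#\text{orbits}=|\GL_d|^{-1}\sum_g \mathrm{Fix}(g)$. I will bound $\mathrm{Fix}(g)$ through the dimension of the centraliser or eigenspace structure of $g$ on $V$. The worst case is a transvection, which fixes roughly $p^{r(N-r)-c d r}$ subspaces. Since $d,r\asymp m$, summing over conjugacy classes should give a relative error $p^{-cm^2}$, well below $p^{-m/3}$.

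\textbf{Step 3 (optimising over $r$).} The main term is $\sum_r p^{e(r)}/g_p^2$ with $e(r)=r(\binom{m-r+1}2-r)-(m-r)^2$. This is a cubic in $r$, maximised near $r=m/3$. Writing $r=m/3+t$ makes the exponent the leading value minus a quadratic in $t$. The sum over $t$ is dominated by the one or two integers closest to the real maximiser, and the others are down by $p^{-\Theta(m)}$ relative factors. The residue of $m$ mod $3$ decides which integers these are: for $m\equiv2$ two adjacent values tie up to $p^{-3}$, which explains the factor $(1+p^{-3})$. I will compute $e(r)$ exactly in the three cases to recover the exponents $\frac{2}{27}m^3-\frac49m^2+\dots$ and the constants. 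The neglected neighbouring terms are $p^{-cm}$ smaller, and I expect that when $m\equiv0,1$ the next competitor is down by exactly $p^{-m/3+O(1)}$, which is the source of the error $O(p^{-m/3})$. The secondary factors $1+O(p^{-d})$ and $1+O(p^{-(N-r)})$ are also at least this small.

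\textbf{Main obstacle.} The hard part is Step 2: proving that nontrivial elements of $\GL_d$ contribute only a relative $p^{-cm}$ (indeed $p^{-cm^2}$) in Burnside's sum. This needs uniform bounds on fixed subspaces of an element acting on $\Lambda^2\oplus$ standard, or on quadratic forms when $p=2$, across all conjugacy classes. I would first handle elements with a large eigenspace (near-scalar elements) by direct counting. For the remaining elements I would use the crude bound that the number of $g$-invariant subspaces is at most $p^{r(N-r)-\Omega(rd)}$, via the codimension of the centraliser of $g$ in $\GL(V)$. I also need to check that the $p=2$ quadratic-map model has the same dimension $\binom{d+1}2$, so that one formula covers all primes.
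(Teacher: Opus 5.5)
Your architecture is the paper's. You reduce to roots by Theorem~\ref{thm:structure}, count roots as orbits with generically trivial stabilizer to get $B_p(d,s)\approx g_p^{-2}p^{sD_d-d^2-s^2}$, and then locate the dominant layers of the cubic exponent. Your description of those layers, including the $p^3:1$ tie when $m\equiv2\pmod 3$, matches the paper's table. Your $\GL_d$-orbits on codimension-$r$ subspaces are the same objects as the paper's $\GL(d,p)\times\GL(r,p)$-orbits on admissible pairs, since $\genfrac{[}{]}{0pt}{}{D_d}{r}_p=\sigma_{p,d,r}p^{rD_d}/|\GL(r,p)|$.

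\textbf{The gap is Step~2.} This is the real content of the sharp count, and you leave it as a plan. The bound you state is too weak. A saving of $p^{-\Omega(rd)}$ for the non-near-scalar $g$ does not suffice: there are about $p^{d^2}$ such $g$, and $d^2\approx 2rd$, so the implied constant would have to exceed $d/r$. For the intermediate classes, the saving must also grow with the distance of $g$ from the scalars. Moreover, an invariant subspace $U=\ker\phi$ carries an induced action $h$ on the $r$-dimensional quotient, with $\phi g=h\phi$. You must control the count for every $h\in\GL_r$, not only the $h=I$ term (subspaces containing $\im(g-I)$) behind your transvection estimate. It is not clear how the centralizer of $g$ would do this.

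The paper settles exactly this point in Lemma~\ref{lem:tensor-rank}, by a case analysis of cyclic block decompositions. It shows $\rank((\Lambda^2P)\otimes Q-I)\geq Tn^2$ for any fixed $T$ outside a near-scalar set $B_K$ of pairs of size $p^{O(n)}$, and $\geq\kappa_0n^2$ on $B_K\setminus Z$. Lemma~\ref{lem:fixed-space-moments} then turns this into $\Delta_{p,d,s}\leq p^{-cd^2}$. Because $\GL(V)\times\GL(W)$ acts linearly on all of $\Hom(\Lambda^2V,W)$, every fixed set is a subspace, which avoids counting fixed points on a Grassmannian.

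You can also close the gap inside your own framework, as follows.
\begin{itemize}
\item Triangularize $h$ over $\bar{\F}_p$ and solve $\phi g=h\phi$ row by row. This gives $\dim\{\phi:\phi g=h\phi\}\leq r(N-c(g))$, uniformly in $h$, where $c(g)=\min_{\lambda\in\bar{\F}_p}\rank(g|_M-\lambda)$.
\item Hence $\mathrm{Fix}(g)\leq p^{r(N-c(g))}\leq p^{-r(c(g)-r)}\genfrac{[}{]}{0pt}{}{N}{r}_p$.
\item It remains to bound $\min_\lambda\rank(\Lambda^2g-\lambda)$ below in terms of $k=\min_\alpha\rank(g-\alpha)$. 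The bound should be of order $kd$ for small $k$ and of order $d^2$ once $k$ is a fixed fraction of $d$; the argument in the proof of Lemma~\ref{lem:common-form-stabilizers} is the right kind of tool.
\item For odd $p$, the linear summand gives $c(\alpha I)=d$ for scalars $\alpha\neq1$.
\item Compare these savings with the $p^{O(kd)}$ elements at distance $k$. For $r/d\in[1/3,2/3]$ this yields relative error $p^{-cd^2}$.
\end{itemize}

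Two smaller points.
\begin{itemize}
\item The Gaussian binomial is $p^{r(N-r)}g_p^{-1}\bigl(1+O(p^{-r})+O(p^{-(N-r)})\bigr)$, not just $1+O(p^{-(N-r)})$. The $O(p^{-r})$ term is of order $p^{-m/3}$ at the principal layers. When $m\not\equiv0\pmod 3$ it is larger than the neighbouring-layer error you identify as the source of $O(p^{-m/3})$.
\item Layers with $d/m$ far from $2/3$ need a separate crude bound, such as $B_p(d,s)\leq p^{sD_d}$ from Lemma~\ref{lem:root-generator-tail}, because your Step~2 estimate requires $r\asymp d$.
\end{itemize}
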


Taking $p=2$ and $m=\lfloor\log_2x\rfloor$, Theorems~\ref{thm:main} and~\ref{thm:sharp} approximate $F(x)$ by $M_2(m)$ with relative error $O(2^{-m/3})$ as $x\to\infty$.

The sharp asymptotic in Theorem~\ref{thm:sharp} is accompanied by a structural concentration phenomenon:
for each fixed prime $p$, almost all isomorphism classes of groups of order $p^m$ have central elementary abelian Frattini subgroup.
To be more precise, let $\Phi(G)$ denote the Frattini subgroup of a finite $p$-group $G$, and define
\[
\begin{aligned}
R_p(m)&=\#\{G:|G|=p^m,\ \Phi(G)\leq Z(G),\ \Phi(G)^p=1\},\\
Q_p(m)&=\#\{G:|G|=p^m,\ \cl(G)\leq2\}.
\end{aligned}
\]
Here $\cl(G)$ denotes the nilpotency class.
Mann~\cite[Question~3]{Mann1999} asked whether almost all $p$-groups have nilpotency class two, or at least whether such groups have positive density among the groups of order $p^m$. We prove a stronger structural concentration result as follows.

\begin{theorem}\label{thm:structure}
Let $p$ be a fixed prime.
For all sufficiently large integers $m$,
\[
R_p(m)\leq Q_p(m)\leq P_p(m)
\leq(1+p^{-\frac{m^2}{300}})R_p(m).
\]
In particular,
\[
\lim_{m\to\infty}\frac{R_p(m)}{P_p(m)}
=
\lim_{m\to\infty}\frac{Q_p(m)}{P_p(m)}
=1.
\]
\end{theorem}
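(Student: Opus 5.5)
The two left-hand inequalities are immediate. If $\Phi(G)\le Z(G)$, then $[G,G]\le\Phi(G)\le Z(G)$, so $\cl(G)\le2$; hence $R_p(m)\le Q_p(m)\le P_p(m)$. All the work is in the upper bound $P_p(m)\le(1+p^{-m^2/300})R_p(m)$. The limit statements then follow at once.

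For the upper bound I will compare two counts. The first is a lower bound for $R_p(m)$ of Higman type. Write $d$ for the minimal number of generators, and let $G$ be a group of order $p^m$ with $\Phi(G)$ central and elementary abelian. Then $G/\Phi(G)\cong\F_p^d$ and $\Phi(G)\cong\F_p^{m-d}$. The $p$-power map and the commutator map together give a linear surjection from $V\oplus\wedge^2V$ onto $\Phi(G)$, where $V=\F_p^d$. Conversely, every such surjection (for $p$ odd, and with the usual quadratic-map modification for $p=2$) is realised by a quotient of the relatively free group $F/F^p\gamma_2(F)^p\gamma_3(F)$ of the appropriate variety. Isomorphism classes therefore correspond to $\GL_d(\F_p)$-orbits on subspaces of codimension $m-d$ in a space of dimension $d+\binom d2$. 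Counting orbits by dividing by $|\GL_d|\le p^{d^2}$ gives
\[
\log_p R_p(m)\ \ge\ \max_d\Bigl[(m-d)\Bigl(d+\tbinom d2-(m-d)\Bigr)-d^2\Bigr]+O(1)\;=\;\tfrac{2}{27}m^3-\tfrac49m^2+O(m),
\]
with the maximum attained near $d=2m/3$.

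The second ingredient is an upper bound for the number of groups outside this class. I stratify the groups of order $p^m$ by three quantities: $d=d(G)$, $e=\log_p|\Phi(G):\Phi(G)^p[\Phi(G),G]|$, and the order of $N=\Phi(G)^p[\Phi(G),G]$. A group lies outside $R_p$ exactly when $N\ne1$. In that case $G/N$ lies in the class counted by $R_p$, with parameters $(d,e)$, and $G$ is an extension of $G/N$ by a nontrivial $N$ of order $p^k$. Following Sims's method, I describe $G$ through a presentation relative to $G/N$, and count by the choice of $G/N$, of the action and cocycle data, and of relations $p$-lifted through the lower exponent-$p$ central series. This should give a bound of the form
\[
\log_p\#\{G:\ N\ne1\}\ \le\ \max_{k\ge1,\,d}\Bigl[\,\log_p R_p(m-k;d)+O(kdm)-\delta\,\Bigr],
\]
where $\delta$ reflects the loss of about $k(m-d)$ degrees of freedom in the cubic exponent. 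The key computation is that the cubic $\frac{2}{27}m^3$ is strictly concave in the relevant region. Moving $k$ dimensions from the layer $\Phi/\Phi_2$ into deeper layers costs at least about $c\,k m^2$ in the exponent. Since $k\ge1$, this is at least $m^2/300$ after absorbing the lower-order terms for large $m$.

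The main obstacle is this deeper-layer counting. Sims's bound has error $O(m^{5/2})$, which is far too crude; the paper's own form of the argument is needed. It must show that each nontrivial deeper layer costs order $m^2$ and not merely order $m^{5/2}$ worth of slack. I would try to prove it by induction on $k$, one central step at a time. At each step I would bound the number of central extensions of a given group $H$ by $\F_p$, up to isomorphism, by $|H^2(H,\F_p)|$-type quantities. For an $H$ with $d$ generators these are at most $p^{\binom{d}{2}+d+(m-d)d}$. I would compare this with the roughly $p^{\,d^2/2+\dots}$ gain obtained by instead adding that dimension to the elementary abelian layer, and use the constraint $d\approx 2m/3$ together with a separate crude bound when $d$ is far from $2m/3$. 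Summing the geometric series over $k$ and $d$ then yields the stated ratio.
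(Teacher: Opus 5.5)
Your left-hand inequalities, the lower bound $R_p(m)\ge p^{\lambda(m)}$ via $\GL(d,p)$-orbits on subspaces, and the stratification by $(d,\dim\Phi(G)/N,\log_p|N|)$ with $N=\gamma_3^{(p)}(G)$, built up one central step at a time, all match the paper (Lemmas~\ref{lem:root-characterization}, \ref{lem:root-lower-bound} and the descendant tree of Section~\ref{sec:descendants}). One correction there: the relatively free group must be $F/F^{p^2}\gamma_2(F)^p\gamma_3(F)$. With $F^p$ the power map is killed and $\Phi$ has dimension $\binom d2$, not $D_d$.

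The gap is the upper bound for nonroots, which is the whole content of the theorem. The comparison you propose for it goes the wrong way. Passing from roots of type $(d,s)$ to roots of type $(d,s+1)$ multiplies the count only by $B_p(d,s+1)/B_p(d,s)\approx p^{D_d-2s-1}$. Your per-step bound $p^{D_d+(m-d)d}$ exceeds this by about $p^{d^2/2}$ when $d\approx 2m/3$. Equivalently, in your displayed inequality the $O(kdm)$ term would need implied constant below $1/3$ to beat $R_p(m;d)$. Your $|H^2|$ bound gives about $2/3$, and $\delta\approx k(m-d)$ is of lower order.

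Even the exact number of allowable lines at a root $Q$, namely $p^{D_d-s}(p^{\nu(Q)}-1)/(p-1)$, exceeds the gain by at least $p^{s+1}$ as soon as $\nu(Q)\ge1$. Iterated over $t$ steps, a count of extension classes in which every root has descendants overshoots the number $\approx p^{\mathcal F(d,s+t)}$ of roots of the same order by a factor $p^{st+t^2}$. Concavity does not repair this. Concavity of $x^2(1-x)/2$ in the generator proportion controls $d$, not the depth $k$, and the leading Sims-type term $d^2(m-d)/2$ is the same however $\Phi(G)$ is layered. So counting cohomology classes, rather than $\Aut$-orbits, cannot work unless most roots have no descendants at all.

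That is the missing idea: a generic root in the central range is terminal. By Proposition~\ref{prop:nucleus-bound}, $\nu(Q)\le(s+1)R_\beta$, and $\nu(Q)=0$ whenever the Jacobi map $J_\beta:\Lambda^3V\to V\otimes W$ is onto. The maps $\beta$ whose Jacobi dual contains a rank-$R$ element lie in a set of codimension about $\Delta_R=\binom d3-\binom{d-R}3\approx Rd^2/2$. This rarity (Proposition~\ref{prop:weighted-roots}, which needs $s\ge3$) is what pays for the factor $p^{t(D_d-s)+t\nu(Q)}$ and the $p^{st+t^2}$ excess.

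You would also need three further ingredients:
\begin{itemize}
\item Control of the growth of $h=\dim H^2(Y)-\dim L(Y)$ along the tree, since one step can raise $r$ by up to $d$. The paper does this with the capacity bound and the cup-product annihilator tails behind Proposition~\ref{prop:descendant-count}.
\item A separate argument for roots with $s\le2$, which the Jacobi-rank stratification does not cover and whose descendant trees are deep (Proposition~\ref{prop:bounded-frattini}).
\item A Sims--Newman--Seeley count that retains $d$, to get a cubic saving away from $d\approx2m/3$ (Proposition~\ref{prop:generator-localization}); the standard $O(m^{5/2})$ form does not give this.
\end{itemize}
None of these appears in your sketch, and the induction on $k$ as you describe it would not produce them.
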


Since the number of abelian groups of order $p^m$ is the partition number of $m$, which is at most $2^{m-1}$, abelian groups form a negligible proportion of the groups of order $p^m$.
Hence Theorem~\ref{thm:structure} implies that almost all groups of order $p^m$ have nilpotency class exactly two.
Theorem~\ref{thm:structure} also shows that the asymptotic in Theorem~\ref{thm:sharp} is already accounted for by groups of order $p^m$ with central elementary abelian Frattini subgroup.

For the family counted by $R_p(m)$, Bläser, Li, Qiao and Rogovskyy \cite[Theorem~1.3]{BLQR} proved
\[
\log_p R_p(m)=\frac{2m^3}{27}-\frac{4m^2}{9}+O(m).
\]
Theorems~\ref{thm:sharp} and~\ref{thm:structure} refine this logarithmic estimate for $R_p(m)$ by giving a formula for the number $P_p(m)$ of all groups of order $p^m$, with relative error $O(p^{-m/3})$.

Theorem~\ref{thm:structure} also allows us to study finer statistics of a random group of order $p^m$.
A fundamental invariant of a finite $p$-group is its minimum number of generators, determined by the Frattini quotient through the Burnside basis theorem. For a finite $p$-group $G$, let
\[
d(G)=\dim_{\F_p}G/\Phi(G)
\]
denote the rank of $G$. The distribution of $d(G)$ describes the typical rank of the $p$-groups contributing to the asymptotic count.
Let $\delta_i$ denote the point mass at $i$, and define, for $m\geq3$,
\begin{equation}\label{eq:generator-law}
\pi_{p,m}=
\begin{cases}
\delta_{2a-1}&\text{if }m=3a\\[1mm]
\delta_{2a}&\text{if }m=3a+1\\[1mm]
(p^3\delta_{2a+1}+\delta_{2a})/(p^3+1)&\text{if }m=3a+2.
\end{cases}
\end{equation}
For $X\in\{P_p,Q_p,R_p\}$, let $\mu_{X,p,m}$ denote the distribution of $d(G)$ on the uniformly chosen isomorphism classes counted by $X(m)$.
We use the total variation distance
\[
d_{\mathrm{TV}}(\mu,\nu)=\frac12\sum_{i\geq0}|\mu(i)-\nu(i)|.
\]

\begin{theorem}\label{thm:generator}
Let $p$ be a fixed prime and let $X\in\{P_p,Q_p,R_p\}$.
As $m\to\infty$,
\[
d_{\mathrm{TV}}(\mu_{X,p,m},\pi_{p,m})=O(p^{-\frac{m}{3}}).
\]
\end{theorem}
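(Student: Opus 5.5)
Since $R_p(m)\le Q_p(m)\le P_p(m)\le(1+p^{-m^2/300})R_p(m)$ by Theorem~\ref{thm:structure}, the three uniform measures $\mu_{P_p,p,m}$, $\mu_{Q_p,p,m}$, $\mu_{R_p,p,m}$ are within total variation distance $O(p^{-m^2/300})$ of one another. Indeed, for nested finite sets $A\subseteq B$ the uniform measures satisfy $d_{\mathrm{TV}}\le 1-|A|/|B|$, and every group counted by $R_p(m)$ is also counted by $Q_p(m)$ and $P_p(m)$. So it suffices to prove the estimate for $X=R_p$ and then apply the triangle inequality.

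For $R_p$, I would stratify by the rank $d=d(G)$. A group with $\Phi(G)\le Z(G)$ and $\Phi(G)^p=1$ is determined by $V=G/\Phi(G)\cong\F_p^d$, by $W=\Phi(G)\cong\F_p^{m-d}$, and by the data of commutator and $p$-power maps. For $p$ odd these are a surjective pair consisting of a bilinear alternating map $\Lambda^2V\to W$ and a linear map $V\to W$. For $p=2$ the data is a quadratic map. In both cases the image must span $W$, and isomorphism classes correspond to orbits of $\GL(V)\times\GL(W)$. Write $R_p(m,d)$ for the number of these orbits. The key input is that the paper's proof of Theorem~\ref{thm:sharp} (which I take to be available in stratified form) gives
\[
R_p(m,d)=\frac{p^{\,d(m-d)+\binom{d}{2}(m-d)-d^2-(m-d)^2}}{g_p^2}\bigl(1+O(p^{-m/3})\bigr)\quad\text{or similar,}
\]
at least for $d$ near $2m/3$. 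The heuristic is that the number of orbits is the number of surjective maps divided by $|\GL_d||\GL_{m-d}|$, since generic stabilizers are trivial. For $d$ far from the optimum, it gives a crude upper bound of the form $p^{e(d)+O(m)}$. Here
\[
e(d)=\tbinom{d+1}{2}(m-d)-d^2-(m-d)^2
\]
is a cubic in $d$ maximized near $d=2m/3$.

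Next I would compare exponents. Writing $d=2a-1+j$ when $m=3a$, and similarly in the other residue classes, one computes $e(d)-e(d')$ for adjacent $d$. When $m\equiv0,1\pmod 3$, the unique maximizing integer is $2a-1$ or $2a$ respectively, and the neighbours lose at least a factor $p^{m/3-O(1)}$. When $m=3a+2$, the two values $2a$ and $2a+1$ have exponents differing by exactly $3$, which together with the $g_p$ factors yields the weights $p^3:1$. This is consistent with the factor $(1+p^{-3})$ in $M_p(m)$. The quadratic decay of $e(d)$ away from the maximum then sums the tail to $O(p^{-m/3})$ relative mass. I would check that the constants match $M_p(m)$ by summing the stratified main terms, which reproduces Theorem~\ref{thm:sharp}. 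This is a good consistency check on the exponents.

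The main obstacle is obtaining the stratified asymptotic with relative error $O(p^{-m/3})$ uniformly for the two or three central values of $d$, together with a crude bound for all other $d$. For this I need to show that the maps with nontrivial stabilizer in $\GL_d\times\GL_{m-d}$, and the non-surjective maps, contribute only relative $O(p^{-cm^2})$. I would attempt this with a Burnside/orbit-counting lemma: bound the number of maps fixed by each non-identity pair $(g,h)$ using the codimension of fixed spaces, which is $\gg m$ times the rank of $g-1$ or $h-1$. Presumably this is exactly the machinery used for Theorem~\ref{thm:sharp}, and I would reuse it for each $d$ rather than re-derive it.
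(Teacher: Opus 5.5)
The architecture of your proposal is the paper's. Your reduction from $P_p,Q_p$ to roots via Theorem~\ref{thm:structure} and the nested-uniform-measure bound is exactly what the paper does. Your exponent $e(d)$ is the paper's $E_m(d)$. The peak layers, and the $p^3:1$ split coming from $E_m(2a+1)-E_m(2a)=3$, match the paper's table.

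One step fails as you state it: there is no crude bound $B_p(d,m-d)\le p^{e(d)+O(m)}$ valid for all $d$ away from $2m/3$.
\begin{itemize}
\item For $d=m$ one has $B_p(m,0)=1$, while $e(m)=-m^2$.
\item For $d=m-1$ one has $B_p(m-1,1)\ge1$, while $e(m-1)=-\binom{m-1}{2}-1$.
\end{itemize}
The reason is that the Burnside/stabilizer-average input (Proposition~\ref{prop:tensor-stabilizers}, Corollary~\ref{cor:parameter-errors}) is only available when $s/d$ lies in a fixed compact subinterval of $(0,\infty)$, and it genuinely fails for bounded $s$. There the joint stabilizers contain isometry groups of the commutator forms, of order $p^{\Theta(d^2)}$. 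So the orbit count exceeds $p^{e(d)}$ by a factor $p^{\Theta(m^2)}$, not $p^{O(m)}$.

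The repair is the paper's two-regime argument.
\begin{itemize}
\item \emph{Outside the window} $3m/5\le d\le 3m/4$: use only the trivial count $B_p(d,s)\le p^{sD_d}$, together with the identity $\frac{2}{27}-\frac{x^2(1-x)}{2}=\frac{(x-2/3)^2(x+1/3)}{2}$. This beats $M_p(m)\ge p^{\lambda(m)}$ by a factor $p^{-cm^3}$; see Lemma~\ref{lem:root-generator-tail} with $\rho=1/15$.
\item \emph{Inside the window}: an unspecified $O(m)$ loss in the exponent is itself too weak, because layers at bounded distance from the peak are only linearly lower. For $m=3a$, the layer $d=2a$ is smaller than the peak layer $d=2a-1$ by a factor of only $p^{a-2}$, and $d=2a+1$ by only $p^{5a+1}$. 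What you need is $O(1)$ precision. Lemma~\ref{lem:root-orbit-identity} combined with Corollary~\ref{cor:parameter-errors} gives $B_p(d,m-d)\le 2g_p^{-2}p^{E_m(d)}$ uniformly on the window. Since the differences $E_m(d+1)-E_m(d)$ decrease there, the non-peak layers sum geometrically to relative mass $O(p^{-m/3})$.
\end{itemize}

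With this, your layerwise asymptotics at the peak give
\[
\sum_d\bigl|B_p(d,m-d)-M_p(m)\pi_{p,m}(d)\bigr|=O\bigl(M_p(m)p^{-m/3}\bigr).
\]
This yields the total variation bound for roots, and your reduction then finishes the proof.
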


Thus the minimum number of generators is concentrated at one value when $m\equiv0,1\pmod3$.
When $m\equiv2\pmod3$, two adjacent values occur with limiting probabilities
\[
\frac{p^3}{p^3+1}\quad\text{and}\quad\frac{1}{p^3+1}.
\]

Another application of the structural theorem concerns group isomorphism testing.
Grochow, Ivanyos, Qiao and Sun \cite[Theorem~1.1]{GIQS2026} give an algorithm that tests isomorphism of two $2$-groups of order $N$ with central elementary abelian Frattini subgroup, given by their Cayley tables, in time
\[
N^{O((\log N)^{\frac{1}{2}})}.
\]
By Theorems~\ref{thm:main} and~\ref{thm:structure}, the isomorphism-testing algorithm applies to a family of density one among all isomorphism classes of finite groups, ordered by group order.

The rest of the paper is organized as follows. Sections~\ref{sec:roots}--\ref{sec:density} establish the structural concentration theorem.
Section~\ref{sec:sharp-enumeration} combines the structural concentration theorem with a sharp orbit count to prove the counting formula in Theorem~\ref{thm:sharp} and the generator distribution in Theorem~\ref{thm:generator}.
Section~\ref{sec:cumulative-enumeration} combines the $2$-group estimates with bounds for groups whose order has an odd divisor to prove the global enumeration theorem.

\section*{AI Declaration}

The work reported in this paper was carried out in collaboration with the AI system GPT-6, which we used extensively in developing the arguments and preparing the manuscript.
We initially studied the proportion of groups of nilpotency class two among finite $p$-groups of a given order.
During discussions with GPT-6, we focused on a family of such groups that appeared particularly amenable to counting.
Further work with GPT-6 led to a candidate proof that, for each fixed prime $p$, the proportion of isomorphism classes of groups of order $p^m$ having nilpotency class two tends to one as $m$ tends to infinity.

While examining this argument, we noticed that estimates involving the automorphism groups of the $2$-groups in this family might also provide an approach to the enumeration of general finite groups.
Further discussions with GPT-6 led to a candidate proof that almost all finite groups are $2$-groups.
Most of the technical details of the initial draft were written by GPT-6.
We are continuing to revise the manuscript to improve its readability.

\section{Roots and their parameters}\label{sec:roots}

Throughout, $p$ is a prime.
We use the commutator convention and lower central series defined by
\[
[x,y]=x^{-1}y^{-1}xy,\quad
\gamma_1(G)=G,\quad \gamma_{i+1}(G)=[\gamma_i(G),G],
\]
and the lower exponent-$p$ central series
\[
\gamma_1^{(p)}(G)=G,\quad \gamma_{i+1}^{(p)}(G)=\gamma_i^{(p)}(G)^p[\gamma_i^{(p)}(G),G].
\]
Here $H^p$ is the subgroup generated by the $p$th powers in $H$.
Write $G'=[G,G]$, and let $\Phi(G)$ be the intersection of the maximal proper subgroups, with $\Phi(1)=1$.
The Frattini formula is
\[
\Phi(G)=G^p[G,G]=\gamma_2^{(p)}(G),\quad
d(G)=\dim_{\F_p}G/\Phi(G).
\]
Unless otherwise stated, all vector spaces and tensor products are over $\F_p$.
Let $D_d=d(d+1)/2$.
We call a finite $p$-group $G$ a \emph{root} if $\gamma_3^{(p)}(G)=1$.
The following observation identifies the roots with the groups counted by $R_p(m)$.

\begin{lemma}\label{lem:root-characterization}
For every finite $p$-group $G$,
\[
\gamma_3^{(p)}(G)=1\quad\Longleftrightarrow\quad
\Phi(G)\leq Z(G)\ \text{and}\ \Phi(G)^p=1.
\]
Thus $\Phi(G)$ is central elementary abelian, with the trivial subgroup allowed, and $1\leq R_p(m)\leq Q_p(m)\leq P_p(m)$ for $m\geq1$.
\end{lemma}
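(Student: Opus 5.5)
The plan is to unwind the definition $\gamma_3^{(p)}(G)=\gamma_2^{(p)}(G)^p[\gamma_2^{(p)}(G),G]$ and use the Frattini formula $\Phi(G)=\gamma_2^{(p)}(G)$ stated above. With this substitution,
\[
\gamma_3^{(p)}(G)=\Phi(G)^p\,[\Phi(G),G].
\]
Here $\Phi(G)^p$ is the subgroup generated by $p$th powers of elements of $\Phi(G)$, and $[\Phi(G),G]$ is the subgroup generated by commutators $[x,g]$ with $x\in\Phi(G)$ and $g\in G$.

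Both factors are subgroups, and their product is a subgroup because $\Phi(G)^p$ is characteristic, hence normal. So $\gamma_3^{(p)}(G)=1$ holds if and only if both factors are trivial.

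\begin{itemize}
\item We have $[\Phi(G),G]=1$ exactly when every commutator $[x,g]$ with $x\in\Phi(G)$, $g\in G$ is trivial, that is, when $\Phi(G)\leq Z(G)$.
\item We have $\Phi(G)^p=1$ exactly when every element of $\Phi(G)$ has order dividing $p$.
\end{itemize}

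This gives the equivalence. When both conditions hold, $\Phi(G)$ is central, so in particular abelian, and it has exponent dividing $p$. Hence $\Phi(G)$ is elementary abelian, with the trivial group allowed.

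For the chain of inequalities:

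\begin{itemize}
\item \emph{$R_p(m)\leq Q_p(m)$.} By definition $R_p(m)$ counts the groups of order $p^m$ with $\Phi(G)\leq Z(G)$ and $\Phi(G)^p=1$. For such $G$ we have $G'\leq\Phi(G)\leq Z(G)$, so $\gamma_3(G)=[G',G]=1$ and $\cl(G)\leq 2$. Thus every group counted by $R_p(m)$ is counted by $Q_p(m)$.
\item \emph{$Q_p(m)\leq P_p(m)$.} This is immediate, since $Q_p(m)$ counts a subset of all groups of order $p^m$.
\item \emph{$1\leq R_p(m)$.} The elementary abelian group $C_p^m$ has $\Phi=1$, so it is counted by $R_p(m)$.
\end{itemize}

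There is no real obstacle. The only point needing a word is that the subgroup generated by $\Phi(G)^p$ and $[\Phi(G),G]$ is trivial exactly when both generating sets are trivial. This is clear because each generating set is contained in the product.
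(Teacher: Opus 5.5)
Your proposal is correct and follows essentially the same route as the paper: substitute $\Phi(G)=\gamma_2^{(p)}(G)$ into the definition to get $\gamma_3^{(p)}(G)=\Phi(G)^p[\Phi(G),G]$, then use $G'\leq\Phi(G)$ for class at most two and the elementary abelian group for $R_p(m)\geq1$. Your extra remarks, that the product is a subgroup and that it is trivial exactly when both factors are, just make explicit what the paper leaves implicit.
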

\begin{proof}
The equivalence follows from $\gamma_3^{(p)}(G)=\Phi(G)^p[\Phi(G),G]$.
Since $G'\leq\Phi(G)$, every root has class at most two, and the elementary abelian group of order $p^m$ gives $R_p(m)\geq1$.
\end{proof}

For a nontrivial finite $p$-group define its canonical root by $Q(G)=G/\gamma_3^{(p)}(G)$.
Powers and commutators commute with epimorphic images, so $\gamma_3^{(p)}(Q(G))=1$.
If $|G|=p^m$, set
\[
d=d(G),\quad s=\dim\Phi(Q(G)),\quad t=\log_p|\gamma_3^{(p)}(G)|.
\]
Since $\gamma_3^{(p)}(G)\leq\Phi(G)$,
\[
\Phi(Q(G))=\Phi(G)/\gamma_3^{(p)}(G),\quad
Q(G)/\Phi(Q(G))\cong G/\Phi(G),\quad m=d+s+t.
\]
Let $\mathcal R_p(d,s)$ be a set of representatives of roots with $d(Q)=d$ and $\dim\Phi(Q)=s$, and write $B_p(d,s)=|\mathcal R_p(d,s)|$.
For $t\geq0$ let
\[
D_p(Q,t)=\#\{G:G/\gamma_3^{(p)}(G)\cong Q,\ |\gamma_3^{(p)}(G)|=p^t\}.
\]
Finally, set
\[
N_p(m)=\#\{G:|G|=p^m,\ \gamma_3^{(p)}(G)\ne1\}.
\]
Thus $N_p(m)=P_p(m)-R_p(m)$.
Since $\gamma_3(G)\leq[\gamma_2^{(p)}(G),G]\leq \gamma_3^{(p)}(G)$, we also have $P_p(m)-Q_p(m)\leq N_p(m)$.
A nonroot may still have nilpotency class two.

\subsection{The free root and its quotients}
\label{subsec:free-root}

\begin{lemma}\label{lem:free-root}
For $r\geq1$, let $F_r$ be the free group on generators $x_1,\ldots,x_r$ and let
\[
U_r=F_r/(F_r^{p^2}\gamma_2(F_r)^p\gamma_3(F_r)),\quad
W_r=\Phi(U_r).
\]
Then $W_r$ is central elementary abelian with basis $x_i^p,[x_j,x_i]$ ($i<j$), and $U_r/W_r\cong\F_p^r$.
Every root with parameters $(r,s)$ is $U_r/N$ for a codimension-$s$ subspace $N\leq W_r$.
The map $\Aut(U_r)\to\GL(r,p)$ is onto, and its kernel fixes $W_r$ pointwise; write $T_P$ for the resulting action of $P\in\GL(r,p)$ on $W_r$.
\end{lemma}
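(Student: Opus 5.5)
The plan is to verify the structure of $U_r$ directly from the defining relations, get linear independence from an explicit model of the right order, and then read off the statements about roots and automorphisms from relative freeness.

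\textbf{Structure of $W_r$.} Since $\gamma_3(U_r)=1$, the group $U_r$ has class at most two, so commutation is bilinear.
\begin{itemize}[leftmargin=*]
\item For all $x,y\in U_r$ we have $[x^p,y]=[x,y]^p\in\gamma_2(U_r)^p=1$. Hence $p$th powers are central, and commutators are central of order dividing $p$.
\item Each $x^p$ has order dividing $p$ because $x^{p^2}=1$.
\item By the Hall--Petrescu identity in class two, $(xy)^p=x^py^p[y,x]^{\binom p2}$, which also covers $p=2$. Writing elements as words in the $x_i$ and expanding, every $p$th power lies in the subgroup generated by the $x_i^p$ and the $[x_j,x_i]$.
\end{itemize}
So $W_r=U_r^pU_r'$ is central elementary abelian and spanned by these $r+\binom r2=D_r$ elements. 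Also $U_r/W_r$ is generated by $r$ elements of order $p$ and is elementary abelian, so it is a quotient of $\F_p^r$. Consequently $|U_r|\le p^{r+D_r}$.

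\textbf{Independence via an explicit model.} I expect this lower bound to be the only real obstacle. Let $\Lambda$ be an $\F_p$-space with basis $e_{ij}$ ($i<j$). On $(\mathbf Z/p^2)^r\times\Lambda$ define
\[
(a,w)(b,w')=\Bigl(a+b,\ w+w'+\sum_{i<j}\bar a_j\bar b_i\,e_{ij}\Bigr),
\]
where the bar denotes reduction mod $p$. The cocycle is bilinear, so this multiplication is associative, and the resulting group $G$ has order $p^{2r+\binom r2}=p^{r+D_r}$.
\begin{itemize}[leftmargin=*]
\item The elements $y_i=(\varepsilon_i,0)$ generate $G$.
\item The commutator $[y_j,y_i]$ equals $\pm e_{ij}$, and the power $y_i^p$ equals $(p\varepsilon_i,0)$.
\item $G$ satisfies the laws $x^{p^2}=1$, $[x,y]^p=1$ and $[x,y,z]=1$. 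For the exponent law, the $p$th power of any element has first coordinate in $p(\mathbf Z/p^2)^r$, which reduces to $0$ mod $p$, so all further cocycle terms vanish.
\end{itemize}
Hence $G$ is a quotient of $U_r$, so $|U_r|\ge p^{r+D_r}$. Equality holds, the listed elements form a basis of $W_r$, and $U_r/W_r\cong\F_p^r$.

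\textbf{Roots as quotients.} Let $Q$ be a root with $d(Q)=r$ and $\dim\Phi(Q)=s$. Then $Q$ satisfies the same three laws: $Q$ has class at most two, $\gamma_2(Q)^p\le\Phi(Q)^p=1$, and $x^p\in\Phi(Q)$ gives $x^{p^2}=1$. Choosing $r$ generators of $Q$ gives an epimorphism $U_r\to Q$ with kernel $N$.
\begin{itemize}[leftmargin=*]
\item The epimorphism induces $U_r/W_r\to Q/\Phi(Q)$, a surjection between spaces of dimension $r$. It is therefore an isomorphism, which forces $N\le W_r$.
\item $N$ has codimension $s$ in $W_r$ because $W_r/N\cong\Phi(Q)$.
\end{itemize}

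\textbf{Automorphisms.} The group $U_r$ is relatively free in the variety defined by these laws.
\begin{itemize}[leftmargin=*]
\item Given $P\in\GL(r,p)$, send each $x_i$ to any lift of the $i$th image vector. This extends to an endomorphism of $U_r$ that is surjective modulo $W_r=\Phi(U_r)$. By the Burnside basis theorem it is surjective, hence an automorphism, so $\Aut(U_r)\to\GL(r,p)$ is onto.
\item An element of the kernel has the form $x_i\mapsto x_iw_i$ with $w_i\in W_r$. Since $w_i$ is central of order $p$,
\[
(x_iw_i)^p=x_i^pw_i^p=x_i^p,\qquad [x_jw_j,x_iw_i]=[x_j,x_i].
\]
So the kernel fixes the basis of $W_r$, hence fixes $W_r$ pointwise. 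Thus the action of $\Aut(U_r)$ on $W_r$ factors through $\GL(r,p)$, and this defines $T_P$.
\end{itemize}
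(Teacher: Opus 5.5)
Your proof is correct, and its overall structure matches the paper's. You get the upper bound $|U_r|\le p^{r+D_r}$ from the spanning set, using the class-two power identity where the paper collects to normal forms. The statement about roots follows from the Burnside basis theorem, and the automorphism statements from relative freeness together with $(x_iw_i)^p=x_i^p$ and $[x_jw_j,x_iw_i]=[x_j,x_i]$.

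The one genuine difference is how you prove that the $x_i^p$ and $[x_j,x_i]$ are independent.
\begin{itemize}
\item The paper kills the coefficients of a putative relation one at a time. It uses test maps $U_r\to\mathbf Z/p^2\mathbf Z$, sending a single $x_i$ to a generator and the rest to $1$. It also uses test maps $U_r\to\mathrm{UT}_3(\F_p)$, sending $x_i,x_j$ to adjacent elementary unipotents.
\item You instead build one group of order exactly $p^{r+D_r}$ on $(\mathbf Z/p^2\mathbf Z)^r\times\Lambda$, using the bilinear cocycle $\sum_{i<j}\bar a_j\bar b_ie_{ij}$, and compare orders.
\end{itemize}

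The paper's route only requires checking the defining laws in two tiny, standard groups. Your route requires checking associativity and the laws for the cocycle group. This is easy by bilinearity, and your check of $x^{p^2}=1$ via first coordinates lying in $p(\mathbf Z/p^2\mathbf Z)^r$ is valid, including at $p=2$. In return, your route gives more: since your group is a quotient of $U_r$ of the same order, $U_r$ is isomorphic to your model. This gives an explicit multiplication rule and normal forms for $U_r$, not just independence.

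One small ordering point: you write $W_r=U_r^pU_r'$ before you have shown that $U_r$ is finite. This is harmless. The bound $|U_r|\le p^{r+D_r}$ only uses the subgroup $U_r^pU_r'$, and once finiteness is known, the Frattini formula applies to the finite $p$-group $U_r$.
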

\begin{proof}
Collection using $ab=ba[a,b]$ gives the normal forms
\[
x_1^{a_1}\cdots x_r^{a_r}\prod_{i<j}[x_j,x_i]^{b_{ij}},
\quad 0\leq a_i<p^2,\quad0\leq b_{ij}<p.
\]
Thus $U_r$ is finite of order at most $p^{r+D_r}$.
The subgroup $W_r$ generated by $x_i^p$ and $[x_j,x_i]$ is central elementary abelian: $[x_i^p,y]=[x_i,y]^p=1$.

These $D_r$ generators are independent.
In any relation among them, map one $x_i$ to a generator of the cyclic group of order $p^2$ and all other generators to one.
The resulting cyclic-group relation forces the coefficient of $x_i^p$ to vanish.
Next, for each $i<j$, map the selected two generators to the adjacent elementary unipotent matrices in $\operatorname{UT}_3(\F_p)$ and all others to one.
The group $\operatorname{UT}_3(\F_p)$ satisfies the defining identities of $U_r$: if $A^3=0$, then
\[
(I+A)^{p^2}=I+p^2A+\binom{p^2}{2}A^2=I,
\]
including $p=2$.
The selected commutator has order $p$, forcing its coefficient to vanish.
Hence $\dim W_r=D_r$.
Mapping $x_iW_r$ to the standard basis of $\F_p^r$ proves $U_r/W_r\cong\F_p^r$ and $|U_r|=p^{r+D_r}$.

The Frattini formula gives $\Phi(U_r)=W_r$.
We shall also use Burnside's basis theorem: a subset of a finite $p$-group generates the group if and only if its image generates the Frattini quotient.

For a codimension-$s$ subspace $N\leq W_r$, the quotient satisfies
\[
\Phi(U_r/N)=W_r/N,\quad |U_r/N|=p^{r+s},\quad d(U_r/N)=r.
\]
Conversely, a root $Q$ counted by $B_p(r,s)$ is generated by lifts of a basis of its Frattini quotient.
Centrality and exponent $p$ of $\Phi(Q)$ give $\gamma_3(Q)=1$, $[Q,Q]^p=1$ and $x^{p^2}=1$.
Thus those lifts define an epimorphism $U_r\to Q$.
Its kernel lies in $W_r$, since the map on Frattini quotients is an isomorphism, and has codimension $s$.
These quotients exhaust the root classes.

Lift the columns of $P\in\GL(r,p)$ to elements of $U_r$.
Freeness gives an endomorphism, and the Frattini generation property makes it surjective, hence an automorphism.
An element of the kernel has $x_i\mapsto x_ih_i$, $h_i\in W_r$, and fixes $W_r$ pointwise because
\[
(x_ih_i)^p=x_i^p,\quad [x_jh_j,x_ih_i]=[x_j,x_i].
\]
Thus the restriction $T_P$ of a lift of $P$ to $W_r$ is independent of the chosen lift.
\end{proof}

\begin{lemma}\label{lem:root-automorphisms}
For a root $G$, let $V=G/\Phi(G)$, $W=\Phi(G)$, $d=\dim V$, and $s=\dim W$.
The commutator and $p$th-power operations induce maps $\beta_G:V\times V\to W$ and $\pi_G:V\to W$, given by
\[
\beta_G(xW,yW)=[x,y],\quad \pi_G(xW)=x^p.
\]
Write $y_G=(\beta_G,\pi_G)$.
Every pair of vector-space isomorphisms between the corresponding spaces of two roots that preserves the commutator and $p$th-power maps lifts to a group isomorphism.
In particular,
\[
1\longrightarrow T(G)\longrightarrow\Aut(G)
\longrightarrow\operatorname{Stab}_{\GL(V)\times\GL(W)}(y_G)
\longrightarrow1,
\]
where the kernel is naturally the additive group $\Hom(V,W)$ via
\[
f\longmapsto\tau_f,\quad \tau_f(x)=x f(xW).
\]
Thus $T(G)$ is elementary abelian of order $p^{ds}$, and $\Aut(G)=T(G)$ exactly when the joint stabilizer is trivial.
For $s=0$, $G=V$, $T(G)=1$, and the stabilizer is $\GL(V)$.
\end{lemma}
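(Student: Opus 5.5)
We prove Lemma~\ref{lem:root-automorphisms} by establishing three facts in order: that $\beta_G$ and $\pi_G$ are well defined with the expected algebraic properties, that compatible pairs of linear isomorphisms lift to group isomorphisms, and that the kernel of the restriction map is $\Hom(V,W)$.

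\textbf{Well-definedness.} Since $W=\Phi(G)$ is central of exponent $p$, we have $[xw,yw']=[x,y]$ for $w,w'\in W$. Also $(xw)^p=x^pw^p=x^p$, because $w$ is central and $w^p=1$. Because $G$ has class at most two with $[G,G]^p=1$, the map $\beta_G$ is bilinear and alternating. The map $\pi_G$ satisfies
\[
\pi_G(u+v)=\pi_G(u)+\pi_G(v)+\binom{p}{2}\beta_G(v,u),
\]
which comes from $(xy)^p=x^py^p[y,x]^{\binom p2}$. So $\pi_G$ is linear for odd $p$ and quadratic with polar form $\beta_G$ for $p=2$.

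\textbf{Lifting.} The plan is to use Lemma~\ref{lem:free-root}. Choose lifts $x_1,\dots,x_d$ of a basis of $V$, so that $G=U_d/N$ with $N\le W_d$. Every element of $W_d$ is a product of the basis elements $x_i^p$ and $[x_j,x_i]$. Hence $N$ is exactly the kernel of the linear map $W_d\to W$ that sends $x_i^p\mapsto\pi_G(e_i)$ and $[x_j,x_i]\mapsto\beta_G(e_j,e_i)$. In other words, $N$ is determined by $y_G$ together with the chosen basis.

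Now let $H$ be a second root, and let $(A,B)$ be isomorphisms $V_G\to V_H$ and $W_G\to W_H$ intertwining the commutator and $p$th-power maps. Choose lifts in $H$ of the basis $Ae_i$. The data of $H$ in this basis is then $B$ applied to the data of $G$, so the kernel subspace $N$ is the same for $H$ as for $G$. This gives an isomorphism $U_d/N\to H$, hence an isomorphism $G\cong H$. That isomorphism induces $A$ on Frattini quotients and, on the generators of $W$, agrees with $B$.

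This step is the main obstacle. For $p=2$ one has to check that the identification of $N$ uses only the values of $\pi_G$ on basis vectors together with $\beta_G$, which the linear map above does record. One also has to check that agreement on generators of $W$ gives agreement with $B$ on all of $W$. This holds because the images $x_i^p$ and $[x_j,x_i]$ span $W=\Phi(G)$.

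\textbf{The exact sequence.} Applying the lifting statement with $H=G$ shows that $\Aut(G)\to\operatorname{Stab}(y_G)$ is surjective. An automorphism in the kernel induces the identity on $V$ and on $W$. It therefore has the form $x\mapsto x f(x)$ with $f(x)\in W$, and $f$ factors through $V$ because the automorphism fixes $W$ pointwise. Conversely, for any $f\in\Hom(V,W)$ the map $\tau_f$ is a homomorphism, since $W$ is central:
\[
\tau_f(xy)=xy\,f(x)f(y)=\tau_f(x)\tau_f(y).
\]
It is bijective because $\tau_{-f}$ is its inverse. It also fixes $W$, since $f$ vanishes on $W$. Finally, $\tau_f\tau_g=\tau_{f+g}$, so the kernel $T(G)$ is isomorphic to $\Hom(V,W)$, which is elementary abelian of order $p^{ds}$.

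The remaining claims follow directly. $\Aut(G)=T(G)$ exactly when the stabilizer is trivial. When $s=0$, the group $G=V$ is elementary abelian, $T(G)=1$, and the stabilizer is all of $\GL(V)$.
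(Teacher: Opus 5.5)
Your proposal is correct and follows the paper's argument. Both present the roots as quotients $U_d/\ker\theta$ via Lemma~\ref{lem:free-root}, note that the kernel is determined by the power and commutator data on the basis of $W_d$, and identify the kernel of $\Aut(G)\to\operatorname{Stab}(y_G)$ with $\Hom(V,W)$ through $\tau_f$. Your choice of lifts of $Ae_i$ in $H$ just replaces the paper's lifted automorphism $T_P$ of $U_d$ (the relation $\theta_2T_P=Q\theta_1$). The one step you leave implicit is that $f(x)=x^{-1}\alpha(x)$ is multiplicative, which follows at once from centrality of $W$: $f(xy)=y^{-1}f(x)\alpha(y)=f(x)f(y)$.
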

\begin{proof}
Choose Frattini bases and use Lemma~\ref{lem:free-root} to write the two roots as $U_d/\ker\theta_i$, with surjections $\theta_i:W_d\to W_i$.
The power and commutator maps of the two roots determine $\theta_i$ on the displayed basis of $W_d$.
A compatible pair $(P,Q)$ therefore gives $\theta_2T_P=Q\theta_1$, and hence $T_P(\ker\theta_1)=\ker\theta_2$.
A lift of $P$ in $\Aut(U_d)$ descends to the required isomorphism.
The lifting construction also proves surjectivity in the exact sequence.
The case $s=0$ is the usual action on $V$; if $d=0$, the Frattini generation property makes the group trivial.

Since $W$ is central elementary abelian, linearity gives $\tau_f(xy)=\tau_f(x)\tau_f(y)$ and $\tau_f\tau_h=\tau_{f+h}$, so the maps $\tau_f$ are automorphisms fixing $W$ and inducing the identity on $V$, with inverse $\tau_{-f}$.
Conversely, an automorphism $\alpha$ in this kernel has $h(x)=x^{-1}\alpha(x)\in W$ satisfying $h(xw)=h(x)$ and $h(xy)=h(x)h(y)$.
Hence $h$ factors uniquely through an $\F_p$-linear map $V\to W$.
This kernel identification uses no basis and commutes with isomorphisms, proving naturality.
\end{proof}

Taking orders in the exact sequence of Lemma~\ref{lem:root-automorphisms} gives
\[
|\Aut(G)|=p^{ds}
|\operatorname{Stab}_{\GL(V)\times\GL(W)}(y_G)|.
\]

\subsection{Commutator and power maps}
\label{subsec:parameters}

Let $d,s\geq1$ and $s\leq D_d$, and let
\[
V=\F_p^d,\quad W=\F_p^s,\quad
\Gamma_{p,d,s}=\GL(V)\times\GL(W),\quad
\mathscr B_{p,d,s}=\Hom(\Lambda^2V,W).
\]
Let $\mathscr P_{p,d,s}$ consist of pairs $(\beta,\pi)$ with $\beta\in\mathscr B_{p,d,s}$, where $\pi:V\to W$ is linear for odd $p$, and for $p=2$ satisfies
\[
\pi(0)=0,\quad \pi(v+w)=\pi(v)+\pi(w)+\beta(v,w).
\]
Write $\mathscr P_{p,d,s}^*$ for the pairs satisfying
\[
\operatorname{span}(\beta(V,V)\cup\pi(V))=W.
\]
The change-of-bases action is
\[
(P,Q)\cdot\beta=Q\beta\Lambda^2(P^{-1}),\quad
(P,Q)\cdot\pi=Q\pi P^{-1}.
\]

\begin{proposition}\label{prop:root-parameters}
For $d,s\geq1$ with $s\leq D_d$, the orbits in $\mathscr P_{p,d,s}^*$ correspond exactly to the group classes counted by $B_p(d,s)$, equivalently to the $\GL(d,p)$-orbits on codimension-$s$ subspaces of $W_d$.
The group associated with $(\beta,\pi)$ has induced commutator and $p$th-power maps $\beta$ and $\pi$.
\end{proposition}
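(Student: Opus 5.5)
Three things need proving: every $(\beta,\pi)\in\mathscr P^*_{p,d,s}$ comes from a root with these invariants, every root with parameters $(d,s)$ gives a pair in $\mathscr P^*_{p,d,s}$, and two pairs lie in the same $\Gamma_{p,d,s}$-orbit exactly when the roots are isomorphic. The comparison with subspaces then comes from Lemma~\ref{lem:free-root}.

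\emph{From roots to pairs.} Let $G$ be a root with $d(G)=d$ and $\dim\Phi(G)=s$. Choose bases identifying $G/\Phi(G)$ with $V$ and $\Phi(G)$ with $W$. The maps $\beta_G,\pi_G$ of Lemma~\ref{lem:root-automorphisms} are well defined because $\Phi(G)$ is central of exponent $p$.
\begin{itemize}
\item The map $\beta_G$ is bilinear and alternating, since $G$ has class two; so $\beta_G\in\Hom(\Lambda^2V,W)$.
\item For $\pi_G$, use the class-two identity $(xy)^p=x^py^p[y,x]^{\binom p2}$. For odd $p$ the commutator term is killed because $p\mid\binom p2$ and $[G,G]^p=1$, so $\pi_G$ is additive, hence linear. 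For $p=2$ the identity gives $\pi(v+w)=\pi(v)+\pi(w)+\beta(v,w)$, using that $\beta$ is alternating so the sign is irrelevant.
\item The spanning condition holds because $\Phi(G)=G^p[G,G]$ is generated by the $p$th powers and commutators, and the generators can be reduced to $p$th powers of Frattini-basis lifts together with commutators of them.
\end{itemize}
Changing bases acts by exactly the stated $\Gamma_{p,d,s}$-action. By Lemma~\ref{lem:root-automorphisms}, two roots are isomorphic if and only if their pairs lie in one orbit. So the map from root classes to orbits is well defined and injective.

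\emph{From pairs to roots (the main step).} Given $(\beta,\pi)\in\mathscr P^*_{p,d,s}$, define a linear map $\theta:W_d\to W$ on the basis of Lemma~\ref{lem:free-root} by
\[
x_i^p\mapsto\pi(e_i),\qquad [x_j,x_i]\mapsto\beta(e_j,e_i).
\]
The spanning condition makes $\theta$ surjective. The spanning set $\pi(V)\cup\beta(V,V)$ lies in the span of the images of basis vectors, by linearity of $\pi$ or by the $p=2$ polarization formula. Set $N=\ker\theta$, which has codimension $s$, and $G=U_d/N$. Then $G$ is a root with parameters $(d,s)$ and $\Phi(G)=W_d/N\cong W$.

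It remains to check that $G$ has commutator and power maps exactly $\beta$ and $\pi$, not merely agreeing on basis vectors. Both $\beta_G$ and $\beta$ are bilinear and agree on basis pairs, so they are equal. For $\pi_G$ and $\pi$: when $p$ is odd both are linear and agree on the basis. When $p=2$ both satisfy the same polarization rule with the same $\beta$, so they agree by induction on support size. This shows surjectivity onto the orbits, and the hardest part is this $p=2$ quadratic bookkeeping.

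\emph{Subspace description.} By Lemma~\ref{lem:free-root}, roots with parameters $(d,s)$ are the groups $U_d/N$ with $N\le W_d$ of codimension $s$. Lifting elements of $\GL(d,p)$ to $\Aut(U_d)$ shows that $U_d/N_1\cong U_d/N_2$ if and only if $T_P(N_1)=N_2$ for some $P$, exactly as in the proof of Lemma~\ref{lem:root-automorphisms}. Hence root classes correspond to $\GL(d,p)$-orbits of such subspaces, which completes the equivalence.
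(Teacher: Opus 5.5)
Your proof is correct and follows essentially the paper's route: you realize an admissible pair by the surjection $\theta:W_d\to W$ defined on the basis $x_i^p,[x_j,x_i]$ of Lemma~\ref{lem:free-root}, obtain the orbit--isomorphism correspondence from Lemma~\ref{lem:root-automorphisms}, and obtain the subspace description from the $T_P$-action. The difference is only organizational. The paper packages the argument as a bijection $\Hom(W_d,W)\to\mathscr P_{p,d,s}$ that is equivariant for $\theta\mapsto Q\theta T_P^{-1}$, reaches the subspace orbits by quotienting by $\GL(W)$ first, and reuses that bijection for the counts in Lemma~\ref{lem:parameter-counts}; you instead check injectivity and surjectivity of the class-to-orbit map directly.
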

\begin{proof}
Fix an ordered basis $v_1,\ldots,v_d$ of $V$.
For odd $p$, the $d$ values $\pi(v_i)$ determine an arbitrary linear map.
For $p=2$, arbitrary values $z_i\in W$ determine exactly one quadratic refinement:
\[
\pi\Bigl(\sum_i t_iv_i\Bigr)
=\sum_i t_iz_i+\sum_{i<j}t_it_j\beta(v_i,v_j).
\]
Expanding at $v+w$ gives the cross terms $\sum_{i<j}(t_iu_j+u_it_j)\beta(v_i,v_j)=\beta(v,w)$.
Conversely, successively adjoining the basis vectors with coefficient one in the polarization identity gives this formula.
Thus each $\beta$ has exactly $p^{ds}$ power maps.
In characteristic two the difference of two refinements is linear, so the fiber over $\beta$ is an affine space with translation space $\Hom(V,W)$.
Substitution in the polarization identity verifies the displayed action, and the span of all transformed values is the image under $Q$ of the original span.
Thus admissibility is preserved.

To realize the parameters, use $U_d$ and $W_d=\Phi(U_d)$ from Lemma~\ref{lem:free-root}.
Identify $U_d/W_d$ with $V$.
A linear map $\theta:W_d\to W$ defines
\[
\beta_\theta(v,w)=\theta([x,y]),\quad
\pi_\theta(v)=\theta(x^p),
\]
where $x,y$ lift $v,w$.
Centrality and exponent $p$ of $W_d$ make the definitions of $\beta_\theta$ and $\pi_\theta$ independent of lifts.
Class-two identities make $\beta_\theta$ bilinear and alternating.
The power identity
\[
(xy)^p=x^py^p[y,x]^{\binom p2}
\]
makes $\pi_\theta$ linear for odd $p$, including scalar compatibility from $(x^a)^p=(x^p)^a$, and quadratic with polarization $\beta_\theta$ for $p=2$.
The coordinate relations
\[
\theta(x_i^p)=\pi_\theta(v_i),\quad
\theta([x_j,x_i])=-\beta_\theta(v_i,v_j)
\]
give a bijection $\Hom(W_d,W)\to\mathscr P_{p,d,s}$; surjectivity of $\theta$ is exactly admissibility.
For surjective $\theta$, let $G_\theta=U_d/\ker\theta$.
Lemma~\ref{lem:free-root} gives the required order, Frattini subgroup and generator number of $G_\theta$.
Conversely, lifts of a Frattini basis in any root give an epimorphism $U_d\to G$ with kernel in $W_d$ of codimension $s$; choosing $W_d/\ker\theta\cong W$ recovers the required $\theta$.

The map $\theta'=Q\theta T_P^{-1}$ has parameters
\[
\beta_{\theta'}(v,w)=Q\beta_\theta(P^{-1}v,P^{-1}w),\quad
\pi_{\theta'}(v)=Q\pi_\theta(P^{-1}v).
\]
Group isomorphisms preserve the commutator and power maps, and compatible pairs lift by Lemma~\ref{lem:root-automorphisms}; hence the $\Gamma_{p,d,s}$-orbits on $\mathscr P_{p,d,s}^*$ correspond exactly to the group isomorphism classes.
Surjections with the same kernel differ by a unique change of basis in $W$, so quotienting first by $\GL(W)$ gives exactly the asserted subspace orbits.

\end{proof}

Define
\[
\begin{aligned}
\Delta_{p,d,s}&=\frac1{|\mathscr B_{p,d,s}|}
 \sum_{\beta\in\mathscr B_{p,d,s}}|\operatorname{Stab}_{\Gamma_{p,d,s}}(\beta)|-(p-1),\\
\eta_{p,d,s}&=\Delta_{p,d,s}+(p-2)p^{-ds},\quad
\sigma_{p,d,s}=\prod_{i=0}^{s-1}(1-p^{i-D_d}).
\end{aligned}
\]

\begin{lemma}\label{lem:parameter-counts}
For $d,s\geq1$ with $s\leq D_d$, $\Delta_{p,d,s},\eta_{p,d,s}\geq0$, $\sigma_{p,d,s}>0$, and
\[
|\mathscr P_{p,d,s}|=p^{sD_d},\quad
|\mathscr P_{p,d,s}^*|=\sigma_{p,d,s}p^{sD_d},\quad
1\leq\mathbb E_{x\in\mathscr P_{p,d,s}}
 |\operatorname{Stab}_{\Gamma_{p,d,s}}(x)|\leq1+\eta_{p,d,s}.
\]
\end{lemma}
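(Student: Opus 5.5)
The count of $|\mathscr P_{p,d,s}|$ and $|\mathscr P^*_{p,d,s}|$ comes from the bijection in Proposition~\ref{prop:root-parameters}. The expectation bound is a Burnside-type double count over $\Gamma_{p,d,s}$, in which the scalar elements are handled separately.

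\emph{Counting.} The proof of Proposition~\ref{prop:root-parameters} gives a bijection $\theta\mapsto(\beta_\theta,\pi_\theta)$ from $\Hom(W_d,W)$ onto $\mathscr P_{p,d,s}$. Since $\dim W_d=D_d$ by Lemma~\ref{lem:free-root}, this gives $|\mathscr P_{p,d,s}|=p^{sD_d}$. Under the same bijection, admissibility is exactly surjectivity of $\theta$. Counting surjections $\F_p^{D_d}\to\F_p^s$ row by row (equivalently, counting $s$ linearly independent functionals) gives
\[
\prod_{i=0}^{s-1}(p^{D_d}-p^i)=\sigma_{p,d,s}\,p^{sD_d}.
\]
Each factor $1-p^{i-D_d}$ is positive because $i\le s-1<D_d$, so $\sigma_{p,d,s}>0$.

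\emph{Nonnegativity.} For $\lambda\in\F_p^\times$, put $g_\lambda=(\lambda I_V,\lambda^2 I_W)\in\Gamma_{p,d,s}$. Then $g_\lambda\cdot\beta=\lambda^2\beta(\lambda^{-1}\cdot,\lambda^{-1}\cdot)=\beta$ for every $\beta\in\mathscr B_{p,d,s}$. These $p-1$ elements are distinct, since their first components differ. Hence $|\operatorname{Stab}(\beta)|\ge p-1$ for every $\beta$, which gives $\Delta_{p,d,s}\ge0$. Since $p\ge2$, we also get $\eta_{p,d,s}\ge\Delta_{p,d,s}\ge0$.

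\emph{Expectation bound.} The lower bound $\ge1$ holds because the identity lies in every stabilizer. For the upper bound, the proof of Proposition~\ref{prop:root-parameters} shows that each $\beta$ has exactly $p^{ds}$ compatible maps $\pi$. So
\[
\mathbb E_{x}|\operatorname{Stab}(x)|=\frac{1}{|\mathscr B_{p,d,s}|}\sum_\beta \frac{1}{p^{ds}}\sum_\pi|\operatorname{Stab}(\beta,\pi)|.
\]
For fixed $\beta$, double counting gives
\[
\sum_\pi|\operatorname{Stab}(\beta,\pi)|=\sum_{g\in\operatorname{Stab}(\beta)}\#\{\pi:g\cdot\pi=\pi\}.
\]
Each term is at most $p^{ds}$. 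For odd $p$ and $\lambda\ne0,1$, the element $g_\lambda$ acts by $\pi\mapsto\lambda^2\pi\lambda^{-1}=\lambda\pi$, which fixes only $\pi=0$. There are $p-2$ such elements, so
\[
\sum_\pi|\operatorname{Stab}(\beta,\pi)|\le p^{ds}|\operatorname{Stab}(\beta)|-(p-2)(p^{ds}-1).
\]
When $p=2$ the correction term is zero and the bound is trivial. Dividing by $p^{ds}$ and averaging over $\beta$ gives
\[
\mathbb E\le \Delta_{p,d,s}+(p-1)-(p-2)+(p-2)p^{-ds}=1+\eta_{p,d,s}.
\]

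The only step needing care is this last one: identifying the scalar stabilizer elements $g_\lambda$ and checking that, for odd $p$, they fix only $\pi=0$. Everything else is bookkeeping from Proposition~\ref{prop:root-parameters}.
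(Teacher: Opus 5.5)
Your proposal is correct and follows essentially the same route as the paper. Both count parameters via the bijection with linear maps $\F_p^{D_d}\to W$, with admissible parameters corresponding to surjective maps. Both then bound the stabilizer average by a Burnside double count in which the scalar pairs $(\lambda I_V,\lambda^2I_W)$ are separated, and for odd $p$ each $\lambda\ne1$ fixes only $\pi=0$. The one difference is bookkeeping: the paper writes $\Delta_{p,d,s}=\sum_{g\notin\mathcal Z}\Pr_\beta(g\beta=\beta)$ and computes the scalar contribution exactly, whereas you subtract the scalar deficit fiber by fiber, which gives the same bound.
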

\begin{proof}
Choose an ordered basis $v_1,\ldots,v_d$ of $V$.
For each $\beta$, the proof of Proposition~\ref{prop:root-parameters} gives exactly $p^{ds}$ power maps.
The $\binom d2$ values $\beta(v_i,v_j)$ and the $d$ values $\pi(v_i)$ are arbitrary and determine the pair.
Their span equals the span of all commutator and power values, by the preceding formulas.
Therefore admissible pairs correspond to surjective linear maps $\F_p^{D_d}\to W$.
Counting independent rows gives
\[
|\mathscr P^*|=\prod_{i=0}^{s-1}(p^{D_d}-p^i)
=\sigma_{p,d,s}p^{sD_d}.
\]
Every factor of $\sigma$ is positive because $i<s\leq D_d$.

The subgroup
\[
\mathcal Z=\{(\alpha I_V,\alpha^2I_W):\alpha\in\F_p^\times\}
\]
fixes every alternating map.
Expanding stabilizer orders and exchanging finite sums proves
\[
\Delta_{p,d,s}
=\sum_{g\notin\mathcal Z}\Pr_\beta(g\beta=\beta)\geq0.
\]
The alternating marginal of uniform joint parameters is uniform, because the power fibers all have size $p^{ds}$.
Hence the contribution of $g\notin\mathcal Z$ to the joint average is at most $\Delta_{p,d,s}$.
For odd $p$, $(\alpha I,\alpha^2I)$ acts on a linear power map as $\pi\mapsto\alpha\pi$.
For $\alpha\ne1$, the scalar action fixes exactly $\pi=0$, of probability $p^{-ds}$.
The scalar contribution is consequently $1+(p-2)p^{-ds}$.
For $p=2$ the scalar subgroup contains only the identity, giving the same formula.
The identity provides the lower bound one.

\end{proof}

Let
\[
\lambda(m)=\frac{2m^3}{27}-\frac{4m^2}{9}.
\]

\begin{lemma}\label{lem:root-lower-bound}
For every prime $p$ and integer $m\geq1$, $R_p(m)\geq p^{\lambda(m)}$.
More precisely, for $r\geq1$ and $0\leq s\leq D_r$,
\[
B_p(r,s)\geq\frac{\genfrac{[}{]}{0pt}{}{D_r}{s}_p}{|\GL(r,p)|}
\geq p^{s(D_r-s)-r^2},\quad
\genfrac{[}{]}{0pt}{}{D}{s}_p=\prod_{i=0}^{s-1}\frac{p^D-p^i}{p^s-p^i}.
\]
\end{lemma}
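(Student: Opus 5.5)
By Lemma~\ref{lem:free-root} and Proposition~\ref{prop:root-parameters}, the root classes counted by $B_p(r,s)$ correspond to the $\GL(r,p)$-orbits on codimension-$s$ subspaces of $W_r$, which has dimension $D_r$. The group $\GL(r,p)$ acts through $P\mapsto T_P$. The number of codimension-$s$ subspaces of $W_r$ is $\genfrac{[}{]}{0pt}{}{D_r}{D_r-s}_p=\genfrac{[}{]}{0pt}{}{D_r}{s}_p$. Each orbit has size at most $|\GL(r,p)|$, so the number of orbits is at least this count divided by $|\GL(r,p)|$. This gives the first inequality. The case $s=0$ is consistent: there is exactly one orbit, and the bound is at most $1$.

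For the second inequality I will bound each factor of the Gaussian binomial. For $0\le i<s$, the factor $(p^{D}-p^i)/(p^s-p^i)$ is at least $p^{D-s}$, because $p^D-p^i\ge p^{D-s}(p^s-p^i)$ is equivalent to $p^{D-s+i}\ge p^i$. Hence $\genfrac{[}{]}{0pt}{}{D_r}{s}_p\ge p^{s(D_r-s)}$. Together with $|\GL(r,p)|<p^{r^2}$, this yields $B_p(r,s)\ge p^{s(D_r-s)-r^2}$.

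For $R_p(m)\ge p^{\lambda(m)}$, the roots of order $p^m$ are counted by $\sum_{r+s=m}B_p(r,s)$ over $r\ge1$ and $0\le s\le D_r$. It therefore suffices to choose a single pair $(r,s)$ with $r+s=m$ and $s(D_r-s)-r^2\ge\lambda(m)$. Heuristically, take $r\approx 2m/3$ and $s\approx m/3$. With $r=2m/3$ and $s=m/3$ exactly, we get $D_r=\tfrac{2m^2}{9}+\tfrac m3$, and so
\[
s(D_r-s)-r^2=\tfrac m3\Bigl(\tfrac{2m^2}{9}+\tfrac m3-\tfrac m3\Bigr)-\tfrac{4m^2}{9}=\tfrac{2m^3}{27}-\tfrac{4m^2}{9}=\lambda(m).
\]
The main obstacle is the rounding when $3\nmid m$, together with the small values of $m$.

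For general $m$, I will write $s=\lfloor m/3\rfloor$ or $\lceil m/3\rceil$ and $r=m-s$, and expand $h(s)=s\bigl(\tfrac{(m-s)(m-s+1)}{2}-s\bigr)-(m-s)^2$ around $s=m/3$. Here $h$ is a cubic with $h(m/3)=\lambda(m)$. Its derivative at $m/3$ is
\[
h'(m/3)=\tfrac{r(r+1)}{2}-2s-s\bigl(r+\tfrac12\bigr)+2r\Big|_{r=2m/3,\,s=m/3}=\tfrac{m}{3}+\tfrac{m}{3}+\tfrac{m}{3}-\tfrac{2m}{3}+\tfrac{4m}{3}-\tfrac m6\cdot\ldots ,
\]
which is of order $m$, while the second derivative is of order $m$ as well. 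Since the offset satisfies $|s-m/3|\le 2/3$, choosing the rounding direction according to the sign of $h'(m/3)$ should give $h(s)\ge\lambda(m)$. If this fails, I will take $s$ one smaller or one larger, since the gain is linear in $m$ while the losses are $O(m)$ with controllable constants.

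For small $m$, where $\lambda(m)\le 0$ (that is, $m\le 6$), the trivial bound $R_p(m)\ge1$ from Lemma~\ref{lem:root-characterization} suffices. For the finitely many remaining moderate $m$, I will check the inequality directly. I expect this rounding bookkeeping to be the only delicate part of the argument.
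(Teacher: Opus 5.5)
Your argument follows the paper's proof. The first inequality comes from counting $\GL(r,p)$-orbits on codimension-$s$ subspaces of $W_r$. The second comes from the factor-wise bound $(p^{D_r}-p^i)/(p^s-p^i)\geq p^{D_r-s}$ together with $|\GL(r,p)|<p^{r^2}$. Then $m\leq6$ is trivial because $\lambda(m)=2m^2(m-6)/27\leq0$, and otherwise a single layer with $r\approx2m/3$ suffices. Those parts are fine.

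What is missing is exactly the computation the paper carries out. Your justification for it does not work as written. Your evaluation of $h'(m/3)$ trails off. The remark that ``the gain is linear in $m$ while the losses are $O(m)$'' decides nothing, because the first-order gain and the second-order loss are both of order $m$, so only their constants determine the sign. Your fallback of shifting $s$ by one in either direction is also not harmless. Taking $s=\lfloor m/3\rfloor$ gives $h(s)-\lambda(m)=-a-17/27$ for $m=3a+1$ and $-7a/3-76/27$ for $m=3a+2$, so the rounding direction genuinely matters.

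To finish, expand
\[
h(s)=\tfrac12 s^3-\bigl(m+\tfrac52\bigr)s^2+\tfrac12(m^2+5m)s-m^2 .
\]
This gives $h'(m/3)=5m/6$, $h''(m/3)=-(m+5)$ and $h'''=3$. Writing $\epsilon=s-m/3$, you get
\[
h(s)-\lambda(m)=\frac{5m}{6}\epsilon-\frac{m+5}{2}\epsilon^2+\frac{\epsilon^3}{2}.
\]
The positive sign of $h'(m/3)$ tells you to round $s$ up. Take $s=\lceil m/3\rceil$, that is, $r=\lfloor 2m/3\rfloor$ as in the paper, so that $\epsilon\in\{0,2/3,1/3\}$ for $m\equiv0,1,2\pmod 3$. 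This yields $h(s)-\lambda(m)=0$, $m/3-26/27$ and $2m/9-7/27$ respectively. These are the paper's values $0$, $a-17/27$ and $2a/3+5/27$, and all are nonnegative for $m\geq7$. Also $s\leq r\leq D_r$ holds for this choice, so the hypothesis of the second inequality is met. No separate check of ``moderate'' $m$ is needed.
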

\begin{proof}
The subspace-orbit classification in Proposition~\ref{prop:root-parameters} gives the first inequality: there are $\genfrac{[}{]}{0pt}{}{D_r}{s}_p$ subspaces and each orbit has at most $|\GL(r,p)|$ elements.
For $s=0$ the unique subspace gives the elementary abelian group.
Counting bases gives the Gaussian product, whose factors satisfy
\[
\frac{p^{D_r}-p^i}{p^s-p^i}
=p^{D_r-s}\frac{1-p^{i-D_r}}{1-p^{i-s}}\geq p^{D_r-s},
\quad |\GL(r,p)|<p^{r^2}.
\]
For $1\leq m\leq6$, $\lambda(m)=2m^2(m-6)/27\leq0$.
For $m>6$ choose $r=\lfloor2m/3\rfloor$ and $s=m-r$.
For $m=3a,3a+1,3a+2$ these pairs are respectively $(2a,a),(2a,a+1),(2a+1,a+1)$, with $a\geq2$ and $s\leq D_r$.
Writing $E=s(D_r-s)-r^2$, substitution gives
\[
E-\lambda(m)=
\begin{cases}
0&\text{if }m=3a\\
a-17/27&\text{if }m=3a+1\\
2a/3+5/27&\text{if }m=3a+2.
\end{cases}
\]
The three values of $E-\lambda(m)$ are nonnegative, proving $R_p(m)\geq B_p(r,s)\geq p^{\lambda(m)}$.
\end{proof}

\begin{lemma}\label{lem:root-orbit-identity}
Under the hypotheses of Lemma~\ref{lem:parameter-counts}, let $Y=\mathscr P_{p,d,s}$, $Y_{\mathrm{adm}}=\mathscr P_{p,d,s}^*$, $\Gamma=\Gamma_{p,d,s}$, and $r(y)=|\operatorname{Stab}_\Gamma(y)|$.
With expectation over all complete parameters, let
\[
\zeta=\mathbb E_Y[\mathbf1_{Y_{\mathrm{adm}}}(r-1)].
\]
Then
\begin{equation}\label{eq:sharp-orbit-identity}
|\Gamma|B_p(d,s)=(\sigma_{p,d,s}+\zeta)p^{sD_d},\quad
0\leq\zeta\leq\eta_{p,d,s}.
\end{equation}
Admissibility requires commutator and power values jointly to span $W$; it does not require the commutator map alone to be onto.
\end{lemma}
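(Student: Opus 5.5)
This is Burnside's orbit-counting lemma applied to the admissible set. By Proposition~\ref{prop:root-parameters}, $B_p(d,s)$ is the number of $\Gamma$-orbits on $Y_{\mathrm{adm}}$. Before counting, I check that $Y_{\mathrm{adm}}$ is $\Gamma$-stable; this was noted in the proof of the proposition, since the span of the transformed values is $Q$ applied to the original span.

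The orbit--stabilizer theorem gives $|\text{orbit}(y)|\cdot r(y)=|\Gamma|$. Summing over $y\in Y_{\mathrm{adm}}$ then gives
\[
|\Gamma|\,B_p(d,s)=\sum_{y\in Y_{\mathrm{adm}}}r(y).
\]
Next I write $r(y)=1+(r(y)-1)$ and divide by $|Y|=p^{sD_d}$, which is known from Lemma~\ref{lem:parameter-counts}. The constant part contributes $|Y_{\mathrm{adm}}|/|Y|=\sigma_{p,d,s}$, again by Lemma~\ref{lem:parameter-counts}. The remaining part is exactly $\mathbb E_Y[\mathbf 1_{Y_{\mathrm{adm}}}(r-1)]=\zeta$. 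Multiplying back by $p^{sD_d}$ yields \eqref{eq:sharp-orbit-identity}.

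For the bounds on $\zeta$, the lower bound is immediate: $r\geq1$ because every stabilizer contains the identity, so every summand is nonnegative. For the upper bound, $r-1\geq0$ everywhere, so dropping the indicator can only increase the expectation:
\[
\zeta\leq\mathbb E_Y[r-1]=\mathbb E_Y[r]-1\leq\eta_{p,d,s}.
\]
The last inequality is the bound $\mathbb E_Y r\leq1+\eta_{p,d,s}$ from Lemma~\ref{lem:parameter-counts}.

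None of these steps is a real obstacle. The only points to check are that the orbit correspondence in Proposition~\ref{prop:root-parameters} is with the full group $\Gamma=\GL(V)\times\GL(W)$ acting on $Y_{\mathrm{adm}}$, and that the expectation is taken over all of $Y$ rather than over $Y_{\mathrm{adm}}$; this is exactly what makes the constant term come out as $\sigma_{p,d,s}$. The closing remark of the statement needs no proof: it only recalls that admissibility is the joint spanning condition in the definition of $\mathscr P^*_{p,d,s}$, which is the condition used throughout.
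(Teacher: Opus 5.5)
Your proposal is correct and follows the paper's own proof. Both arguments sum $r(y)$ over each admissible orbit to get $|\Gamma|$, split $r=1+(r-1)$ using $|Y_{\mathrm{adm}}|=\sigma_{p,d,s}p^{sD_d}$ and $|Y|=p^{sD_d}$, and bound $\zeta$ by dropping the admissibility indicator on the nonnegative quantity $r-1$ and invoking $\mathbb E_Y r\leq 1+\eta_{p,d,s}$ from Lemma~\ref{lem:parameter-counts}.
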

\begin{proof}
The nonnegative quantity $r-1$ has average at most $\eta_{p,d,s}$ on $Y$, so the same holds after multiplication by the admissibility indicator.
Each orbit $O$ contributes $\sum_{y\in O}r(y)=|\Gamma|$.
Summing over admissible orbits gives $|\Gamma|B_p(d,s)=|Y_{\mathrm{adm}}|+\zeta|Y|$, as asserted.
More generally, for every invariant $U\subseteq Y_{\mathrm{adm}}$,
\[
\frac{\#(U/\Gamma)}{B_p(d,s)}
=\frac{\mathbb E_Y[\mathbf1_Ur]}{\sigma_{p,d,s}+\zeta}.\qedhere
\]
\end{proof}

\begin{lemma}\label{lem:root-generator-tail}
Extend $B_p(d,s)$ by its group-theoretic definition to all $d,s\geq0$.
Then
\[
B_p(d,s)\leq p^{sD_d},\quad B_p(d,0)=1,\quad
B_p(0,s)=0\ (s>0),\quad B_p(d,s)=0\ (s>D_d).
\]
For $m\geq1$ and $\rho>0$,
\[
\sum_{\substack{0\leq d\leq m\\|d/m-2/3|\geq\rho}}B_p(d,m-d)
\leq(m+1)p^{(\frac{2}{27}-\frac{\rho^2}{6})m^3+\frac{m^2}{8}}.
\]
For fixed $\rho$, the preceding bound is at most $p^{(2/27-\rho^2/12)m^3}$ for all sufficiently large $m$.
\end{lemma}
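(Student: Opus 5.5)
The first four assertions follow directly from the structure set up in Lemma~\ref{lem:free-root} and Proposition~\ref{prop:root-parameters}; the tail estimate is then a calculus bound on the number of subspace orbits.

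\textbf{Elementary assertions.} A root with parameters $(d,s)$ is determined up to isomorphism by an element of $\mathscr P_{p,d,s}$, so $B_p(d,s)\le|\mathscr P_{p,d,s}|=p^{sD_d}$ by Lemma~\ref{lem:parameter-counts}. The remaining cases are direct:
\begin{itemize}
\item For $s=0$ the root is elementary abelian, so $B_p(d,0)=1$. The same holds for $d=0$, where only the trivial group occurs.
\item For $d=0$ and $s>0$, Burnside's basis theorem forces the group to be trivial, so $B_p(0,s)=0$.
\item By Lemma~\ref{lem:free-root}, every root with $d(Q)=d$ is a quotient $U_d/N$ with $N\le W_d$ and $\dim W_d=D_d$. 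Hence $s\le D_d$, and $B_p(d,s)=0$ for $s>D_d$.
\end{itemize}

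\textbf{Tail estimate.} For the sum, I would not use the crude bound $p^{sD_d}$. Instead I would use the subspace-orbit description in Proposition~\ref{prop:root-parameters}: $B_p(d,s)$ is at most the number of codimension-$s$ subspaces of $W_d$, namely the Gaussian binomial $\genfrac{[}{]}{0pt}{}{D_d}{s}_p$. Writing each factor as in the proof of Lemma~\ref{lem:root-lower-bound},
\[
\genfrac{[}{]}{0pt}{}{D_d}{s}_p\le p^{s(D_d-s)}\prod_{j\ge1}(1-p^{-j})^{-1}=p^{s(D_d-s)}/g_p,\qquad 1/g_p\le 1/g_2<4.
\]
Now set $d=xm$ and $s=(1-x)m$ with $x\in[0,1]$. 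Then
\[
s(D_d-s)=m^3\,\tfrac{x^2(1-x)}{2}+m^2(1-x)\bigl(\tfrac{3x}{2}-1\bigr).
\]
\begin{itemize}
\item \emph{Cubic term.} Put $f(x)=(x^2-x^3)/2$. An exact Taylor expansion at $x=2/3$ gives
\[
f(2/3+h)=\tfrac{2}{27}-\tfrac{h^2}{2}(1+h).
\]
Since $h\ge-2/3$, we have $1+h\ge1/3$, so $f\le 2/27-h^2/6\le 2/27-\rho^2/6$ whenever $|h|\ge\rho$. This produces the coefficient $\tfrac{2}{27}-\tfrac{\rho^2}{6}$.
\item \emph{Quadratic term.} The quadratic $(1-x)(3x/2-1)$ is maximized at $x=5/6$, with value $1/24$.
\end{itemize}
Hence each summand is at most $4\,p^{(2/27-\rho^2/6)m^3+m^2/24}$. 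There are at most $m+1$ summands, which gives the stated bound provided $4\le p^{m^2/12}$.

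\textbf{Main obstacle: small $m$.} The step needing care is exactly absorbing the constant $1/g_p$, together with the cases where $s>D_d$ or $d=0$, for small $m$. The inequality $4\le p^{m^2/12}$ holds once $m\ge 5$. For $m\le4$ I would check the finitely many pairs $(d,m-d)$ directly, using three facts:
\begin{itemize}
\item $B_p(d,s)=0$ when $s>D_d$;
\item $B_p(d,0)=1$;
\item for the remaining pairs, the sharper orbit bound $B_p(d,s)\le\genfrac{[}{]}{0pt}{}{D_d}{s}_p$ with its factors estimated individually.
\end{itemize}
In these cases the slack between the quadratic term $m^2/24$ and the allowed $m^2/8$ should suffice. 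If it does not, I would fall back on a more careful estimate of the finite product.

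\textbf{Final clause.} For fixed $\rho$,
\[
(m+1)p^{m^2/8}\le p^{\rho^2m^3/12}
\]
for all sufficiently large $m$. This turns the exponent $2/27-\rho^2/6$ into $2/27-\rho^2/12$.
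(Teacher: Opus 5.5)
Your proposal is correct, but for the tail estimate you take a different route from the paper, and that choice is what creates your ``main obstacle.''

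\textbf{The paper's route.} The paper uses the crude bound $B_p(d,s)\le p^{sD_d}$, which you already establish in your first paragraph. With $d=xm$ one has $(m-d)D_d=\tfrac{m^3x^2(1-x)}{2}+\tfrac{m^2x(1-x)}{2}$. The cubic part is handled by the same identity $\tfrac{2}{27}-\tfrac{x^2(1-x)}{2}=\tfrac{(x-2/3)^2(x+1/3)}{2}$ that you use. The bound $x(1-x)\le 1/4$ gives exactly the $m^2/8$ in the statement. There is no multiplicative constant, so the estimate holds for every $m\ge1$ and every $p$ without casework.

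\textbf{Your route.} Your Gaussian-binomial bound $p^{s(D_d-s)}/g_p$ improves the quadratic term to $m^2/24$. Nothing needs this: the lemma's later use in the proof of Theorem~\ref{thm:sharp} (with $\rho=1/15$) only uses the cubic gap. In exchange you pick up the factor $1/g_p$ and a small-$m$ verification, which you leave as a hedge (``should suffice'').

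\textbf{The small-$m$ check.} It does go through, but not with the uniform constant. Take $m=4$, $(d,s)=(3,1)$, $p=2$, $\rho=1/12$. The target exponent is $(2/27-\rho^2/6)\cdot 64+2=20/3$. Your per-summand bound $4\cdot 2^5=128$ (even $2^5/g_2\approx 111$) exceeds $2^{20/3}\approx 101.6$. The exact value $\genfrac{[}{]}{0pt}{}{6}{1}_2=63$ fits, so the individual factor estimates are genuinely needed there.

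\textbf{Simplest fix.} Fall back on your own bound $p^{sD_d}$, which removes the small-$m$ issue entirely.
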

\begin{proof}
In the nonempty positive-dimensional range, the number of orbits is at most the number $p^{sD_d}$ of complete parameters.
If $s=0$, the group is elementary abelian, including the trivial group when $d=0$.
If $d=0<s$, a nontrivial finite group would equal its Frattini subgroup, contradicting the existence of a maximal proper subgroup.
If $s>D_d$, the $d$ powers and $\binom d2$ commutators cannot span $\Phi(G)$.
For $x=d/m\in[0,1]$ use
\[
\frac2{27}-\frac{x^2(1-x)}2
=\frac{(x-2/3)^2(x+1/3)}2,\quad
(m-d)D_d=\frac{m^3x^2(1-x)}2+\frac{m^2x(1-x)}2.
\]
The inequalities $x+1/3\geq1/3$ and $x(1-x)\leq1/4$ give the first tail bound after summing at most $m+1$ layers.
Finally, $m+1\leq2^m\leq p^m$ and $m+m^2/8\leq\rho^2m^3/12$ eventually.
\end{proof}

\section{Tensor stabilizers and generator localization}\label{sec:tensor-localization}
\subsection{Stabilizers of alternating tensors}
\label{subsec:tensor-stabilizers}

\begin{proposition}\label{prop:tensor-stabilizers}
Fix $0<a\leq b$.
There are $c>0$ and $n_0$, depending only on $a,b$, such that, for every prime power $q$, $n\geq n_0$, and $an\leq s\leq bn$, the natural action of $\Gamma=\GL(n,q)\times\GL(s,q)$ on $X=\Lambda^2\F_q^n\otimes\F_q^s$ satisfies
\[
q-1\leq\frac1{|X|}\sum_{x\in X}|\Gamma_x|
\leq q-1+q^{-cn^2}.
\]
\end{proposition}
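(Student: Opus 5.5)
The plan is to apply Burnside's lemma and then show that only the scalars contribute to the average stabilizer size. Double counting gives
\[
\frac1{|X|}\sum_{x\in X}|\Gamma_x|=\sum_{g\in\Gamma}q^{-\operatorname{codim}\operatorname{Fix}_X(g)}.
\]
For $g=(P,Q)$ the action on $X$ is the linear map $\Lambda^2P\otimes Q$, after replacing $P$ by $P^{-1}$ or a transpose, which does not affect the counting. The subgroup $\mathcal Z_q=\{(\alpha I,\alpha^2I)\}$, or $\{(\alpha I,\alpha^{-2}I)\}$ depending on conventions, acts trivially and has order $q-1$. This gives the lower bound, as in the proof of Lemma~\ref{lem:parameter-counts}. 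For the upper bound it suffices to prove
\[
\sum_{g\notin\mathcal Z_q}q^{-\operatorname{codim}\operatorname{Fix}(g)}\leq q^{-cn^2}.
\]
I would obtain this by stratifying $\Gamma$ according to how far $g$ is from $\mathcal Z_q$, and showing that in each stratum the codimension gain beats the number of elements.

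\emph{Linear algebra tool.} For linear maps $A,B$ over $\overline{\F}_q$, the Jordan decomposition gives
\[
\dim\ker(A\otimes B-1)=\sum_{\lambda\mu=1}\ \sum_{\text{blocks }J_a(\lambda)\subset A,\ J_b(\mu)\subset B}\min(a,b).
\]
Bounding $\min(a,b)$ by $a$ or by $b$ then yields
\[
\dim\operatorname{Fix}\leq\sum_\mu\min\bigl(k_B(\mu)\,m_A(\mu^{-1}),\ m_B(\mu)\,k_A(\mu^{-1})\bigr),
\]
where $k$ denotes the geometric multiplicity (number of blocks) and $m$ the algebraic multiplicity. I would apply this with $A=\Lambda^2P$ and $B=Q$. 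The eigenvalue multiplicities of $\Lambda^2P$ are controlled by those of $P$: the algebraic multiplicity at $\nu$ is $\sum_{\lambda\lambda'=\nu}$ of products of multiplicities of $P$, plus the diagonal terms $\binom{m_P(\lambda)}2$. Its geometric multiplicity is bounded similarly in terms of $k_P$ and $m_P$, using a $P$-invariant flag.

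\emph{Stratification.} Let $k(P)=\min_\lambda\rank_{\overline{\F}_q}(P-\lambda)$ and $\ell(Q)=\min_\mu\rank(Q-\mu)$. The number of $P$ with $k(P)=k$ is at most $q^{2nk+O(n)}$: choose $\lambda$, a kernel of dimension $n-k$, and then $P-\lambda$. Similarly there are at most $q^{2s\ell+O(s)}$ matrices $Q$ with $\ell(Q)=\ell$. The three cases are as follows.
\begin{enumerate}
\item $k\geq1$. Every eigenvalue of $P$ has geometric multiplicity at most $n-k$. I then aim to show that every eigenvalue of $\Lambda^2P$ has geometric multiplicity at most $\binom n2-c_1kn$ for $k\leq n/2$, and at most $(1-c_1)\binom n2$ for larger $k$. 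Together with $\sum_\mu m_Q(\mu)=s$ this gives $\operatorname{codim}\operatorname{Fix}\geq c_1skn\geq c_1a\,kn^2$.
\item $k=0$, so $P=\alpha I$ is scalar. Then $\operatorname{Fix}=\Lambda^2\otimes\ker(Q-\alpha^{\pm2})$, so the codimension is exactly $\binom n2\rank(Q-\alpha^{\pm2})$. Since $g\notin\mathcal Z_q$ this rank is at least $1$. Summing over $Q$ gives $\sum_{r\geq1}q^{2sr+O(s)-\binom n2r}\leq q^{-cn^2}$, because $s\leq bn$ and $n$ is large.
\item $k\geq1$ but small. Here the count of $Q$ costs up to $q^{s^2}$, while the gain $c_1akn^2$ may be smaller than $b^2n^2$. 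So I must use $Q$ jointly with $P$. When $k$ is small, $\Lambda^2P$ has a dominant eigenvalue $\alpha^2$ with eigenspace of dimension at least $\binom{n-k}2$, and all other eigenvalues have total algebraic multiplicity $O(kn)$. The tool then bounds the fixed space by $k_Q(\alpha^{\mp2})\binom n2+O(skn)$. The codimension is therefore at least $\ell(Q)\binom n2-O(bkn^2)$ together with $c_1akn^2$, so it is at least $\max$ of these two expressions. The count is $q^{2nk+2s\ell+O(n)}$. Splitting into $\ell\leq Ck$, where the $P$-gain $c_1akn^2$ dominates $2nk+2s\ell$, and $\ell>Ck$, where the $Q$-gain dominates, I expect both sums to be $\leq q^{-cn^2}$.
\end{enumerate}

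The main obstacle is case 1 together with the matching in case 3. There I must prove the uniform bound that every eigenspace of $\Lambda^2P$ has codimension at least $c_1kn$, over $\overline{\F}_q$ and including non-semisimple $P$. I also need to handle carefully the confusion between geometric and algebraic multiplicities, so that a single large Jordan block of $Q$ is not overcounted. I would first try to reduce to bounding $\dim\ker(\Lambda^2P-\nu)$ via a $P$-stable flag whose graded pieces are eigenspaces of $P$, since this gives subadditivity. Only if that fails would I fall back on an explicit Jordan-block formula for $\Lambda^2$ of a Jordan matrix.
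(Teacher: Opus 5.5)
Your strategy is sound, but it is genuinely different from the paper's. The paper never matches codimension against a refined count. Lemma~\ref{lem:tensor-rank} works over $\F_q$ with cyclic decompositions rather than Jordan forms, and proves a dichotomy. Either $(P,Q)$ lies in the set $B_K$ of pairs within rank $K$ of a compatible scalar pair, which has at most $(q-1)q^{2K(n+s)}$ elements and on which $\eta\geq\kappa_0n^2$ off the kernel, or else $\eta\geq Tn^2$ for an arbitrarily prescribed $T$. Taking $T>1+b^2$, the crude bound $|\Gamma|\leq q^{(1+b^2)n^2}$ then finishes via Lemma~\ref{lem:fixed-space-moments}, and the same lemma later gives higher moments just by enlarging $T$.

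You instead stratify by $(k(P),\ell(Q))$ and match a codimension that grows linearly in $k$ (and in $\ell$ when $P$ is near-scalar) against counts $q^{2nk+2s\ell+O(n)}$. The $\min(a,b)$ formula holds in every characteristic, and Case~2 is right. The Case~3 bookkeeping also closes. The bound $\dim\operatorname{Fix}\leq k_Q(\alpha^{-2})\binom n2+skn$ gives codimension at least $\rank(Q-\alpha^{-2})\binom n2-bkn^2$, and splitting at $\ell=Ck$ with $C>4b$ works uniformly in $q$. Your route gives more quantitative information than the paper's, at the cost of finer counting.

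What is not yet a proof is the eigenspace bound for $\Lambda^2P$ in Case~1, on which Case~3 also rests, and the flag method you prefer cannot supply it. A $P$-stable flag whose graded pieces are eigenspaces bounds $\dim\ker(\Lambda^2P-\nu)$ only in terms of algebraic multiplicities. For $P=\alpha(I+N)$ with $N$ nilpotent of rank $k$ (e.g.\ $P=J_n(1)$, $k=n-1$) the associated graded is scalar, so you get only the trivial bound $\binom n2$.

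Direct arguments do work. Write $\nu=\mu^2$ over $\overline{\F}_q$.
\begin{itemize}
\item For $u\in\ker(P-\mu)$ one has $(\Lambda^2P-\mu^2)(u\wedge v)=\mu\,u\wedge(P-\mu)v$. So the image contains $\ker(P-\mu)\wedge\im(P-\mu)$, which has dimension at least $r(n-r)-\binom{r+1}2\geq r(n/4-1/2)\geq k(n/4-1/2)$ whenever $r=\rank(P-\mu)\leq n/2$. This is the argument in the proof of Lemma~\ref{lem:common-form-stabilizers}.
\item Suppose $k\leq n/2$ but every square root $\mu$ of $\nu$ has $\rank(P-\mu)>n/2$. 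Then $\nu\neq\alpha^2$ for the dominant eigenvalue $\alpha$. Since $\Lambda^2\ker(P-\alpha)$ lies in the $\alpha^2$-eigenspace, it meets $\ker(\Lambda^2P-\nu)$ trivially, so the codimension is at least $\binom{\lceil n/2\rceil}2$.
\item For $k>n/2$, every geometric multiplicity of $P$ is below $n/2$. The Jordan-column count of invariant alternating forms in Lemma~\ref{lem:common-form-stabilizers} then bounds every eigenspace of $\Lambda^2P$ by $n^2/4+n/2$.
\end{itemize}
With these inserted, your argument is complete.
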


We shall use the following rank estimate for both the first and the higher stabilizer moments.
\begin{lemma}\label{lem:tensor-rank}
Fix $0<a\leq b$ and $T>0$.
Let $V=\F_q^n$, $W=\F_q^s$, $an\leq s\leq bn$, and let $\eta(P,Q)=\rank((\Lambda^2P)\otimes Q-I)$.
There are integers $K,n_0$, depending only on $a,b,T$, such that the following assertions hold simultaneously for $n\geq n_0$.
\begin{enumerate}[label=\textup{(\alph*)},leftmargin=*]
\item\label{item:far-scalar-rank} Let $B_K$ be the set of pairs $(P,Q)\in\GL(V)\times\GL(W)$ for which there exist $\alpha,\beta\in\F_q^\times$ with $\alpha^2\beta=1$ and
\[
\rank(P-\alpha I)\leq K,\quad \rank(Q-\beta I)\leq K.
\]
Every pair outside $B_K$ satisfies $\eta(P,Q)\geq Tn^2$.
\item\label{item:near-scalar-rank} Let
\[
Z=\{(\alpha I,\alpha^{-2}I):\alpha\in\F_q^\times\},
\quad \kappa_0=\min\{a/8,1/16\}.
\]
Then
\[
\eta(P,Q)\geq\kappa_0n^2\quad((P,Q)\in B_K\setminus Z),
\quad |B_K|\leq(q-1)q^{2K(n+s)}.
\]
\end{enumerate}
\end{lemma}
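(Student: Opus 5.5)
The plan is to work over the algebraic closure $\bar{\F}_q$, which does not change ranks, and to bound the fixed space $\ker\bigl((\Lambda^2P)\otimes Q-I\bigr)$. Write $A=\Lambda^2P$ and $B=Q$. The fixed space of $A\otimes B$ is the space of intertwiners between $A$ and $(B^{-1})^{\mathsf T}$. By the standard Jordan-block count, its dimension is
\[
\dim\ker(A\otimes B-I)\leq\sum_{\mu}\min\bigl(g_A(\mu^{-1})\,m_B(\mu),\ m_A(\mu^{-1})\,g_B(\mu)\bigr),
\]
where $g$ and $m$ denote geometric and algebraic multiplicities. Thus $\eta(P,Q)$ is $D_ns$ minus this sum. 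Everything then reduces to controlling the largest eigenspaces of $\Lambda^2P$ and of $Q$.

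\emph{Part (b).} I would handle this part first because it is elementary. The counting bound is immediate. Choose $\alpha\in\F_q^\times$; then $\beta=\alpha^{-2}$ is forced. The matrices $P$ with $\rank(P-\alpha I)\leq K$ number at most $q^{2Kn}$ (pick a $K$-dimensional image and a map into it), and likewise at most $q^{2Ks}$ choices of $Q$. For the rank bound, take $(P,Q)\in B_K\setminus Z$ and split into two cases.
\begin{enumerate}[label=(\roman*),leftmargin=*]
\item If $P=\alpha I$, then $Q\neq\beta I$, and $A\otimes B-I=I_{\Lambda^2V}\otimes(\alpha^2Q-I)$ has rank at least $D_n\geq n^2/2$.
\item If $P\neq\alpha I$, restrict to the subspace $\Lambda^2V\otimes\ker(Q-\beta I)$, which has dimension at least $D_n(s-K)$. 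There the operator is $(\beta\Lambda^2P-I)\otimes I$. Using $\rank(P-\alpha I)=r$ with $1\leq r\leq K$, I would show $\rank(\Lambda^2P-\alpha^2I)\geq n-r-1$: pick $v$ with $Pv\neq\alpha v$ and wedge it against a complement of the relevant small subspaces. This gives rank at least $(s-K)(n-K-1)\geq an^2/2$ for large $n$, which comfortably exceeds $\kappa_0n^2$.
\end{enumerate}

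\emph{Part (a).} Let $c_P=n-\max_\lambda g_P(\lambda)$ and $c_Q=s-\max_\mu g_Q(\mu)$ measure distance from scalar. First I would show that $\Lambda^2P$ has every geometric eigenspace of codimension at least roughly $c_P\,n/2$ when $c_P\leq n/2$, and at least a constant times $n^2$ when $c_P$ is large. This follows by analysing eigenvectors $v_i\wedge v_j$ according to the eigenspace decomposition of $P$, using generalized eigenspaces to cope with non-semisimple $P$. Next I would feed this into the intertwiner bound with three cases:
\begin{itemize}
\item If $c_Q>K$, bound each term by $m_A\,g_B$; this gives a fixed space of dimension at most $D_n(s-c_Q)+O(\cdot)$.
\item If $c_P>K$, use $g_A\,m_B$ instead.
\item If both are at most $K$ but the dominant eigenvalues $\lambda_0$ of $P$ and $\mu_0$ of $Q$ fail $\lambda_0^2\mu_0=1$, then $(P,Q)$ is near a scalar pair not of the $B_K$ type. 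The dominant block $\Lambda^2(\text{big eigenspace})\otimes(\text{big eigenspace})$, of dimension about $D_ns$, then carries the nonunit eigenvalue $\lambda_0^2\mu_0$, so the rank is about $D_ns\gg Tn^2$.
\end{itemize}
The constant $K$ is chosen as a function of $T,a,b$ so that in the first two cases the deficit, which is at least about $c\cdot K\cdot n\cdot\min(n,s)$, exceeds $Tn^2$.

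\emph{Main obstacle.} I expect the hardest step to be the eigenspace-codimension estimate for $\Lambda^2P$ when $c_P$ is moderately small but $P$ is not semisimple, together with the bookkeeping in the intertwiner sum when eigenvalues of $A$ and $B^{-1}$ pair up in several ways. My first attempt would be a cruder but robust bound that avoids Jordan forms. If $\rank(X)\geq k$, then $\rank(X\otimes I-I\otimes Y)$ is at least the rank of its restriction to suitably chosen tensor subspaces. I would apply this with a pigeonhole over the at most $n+s$ relevant eigenvalues, accepting a worse (but still linear in $K$) dependence of the deficit on $K$. That suffices, since $K$ may depend on $T$.
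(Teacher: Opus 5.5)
Your reduction is correct, and it takes a genuinely different route from the paper's. The intertwiner bound $\dim\ker(A\otimes B-I)\le\sum_\mu\min\bigl(g_A(\mu^{-1})m_B(\mu),m_A(\mu^{-1})g_B(\mu)\bigr)$ settles the case $c_Q>K$ in one line ($\eta\ge c_Q\binom n2$). The incompatible-dominant-eigenvalue case follows from the invariant block $\Lambda^2E_{\lambda_0}\otimes E'_{\mu_0}$, and part (b) is fine. Three small repairs are needed:
\begin{itemize}
\item $\dim\Lambda^2V=\binom n2$, not $D_n=n(n+1)/2$. In (b)(i) use $\binom n2\ge n^2/4\ge\kappa_0n^2$, since $\binom n2\ge n^2/2$ is false.
\item $\eta$ is \emph{at least} $\binom n2 s$ minus your sum, not equal to it.
\item To land in $B_K$ you must say why the dominant eigenvalues lie in $\F_q^\times$. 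Once $n-K>n/2$ and $s-K>s/2$ they are unique, hence Frobenius-stable, hence in $\F_q^\times$.
\end{itemize}

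\textbf{The gap is the case $c_P>K$.} Through the $g_Am_B$ bound you need, for every $\nu\in\overline{\F}_q^\times$, that $\rank(\Lambda^2P-\nu I)\ge\tau n$ with $\tau=T/a$; then $\eta\ge s\tau n\ge Tn^2$. This is the entire source-side content of part (a), and the proposal only asserts it. Analysing the eigenvectors $v_i\wedge v_j$ covers semisimple $P$ only. Your fallback, a pigeonhole over eigenvalues, gives nothing when $P$ has a single eigenvalue, for example $P=J_n(1)$, where $c_P=n-1$. You have also located the difficulty in the wrong place: small $c_P$ with non-semisimple $P$ is the easy regime. The delicate regime is when $P$ has no large eigenspace.

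\textbf{How to close it with tools you already use.} Let $e=\max_\lambda g_P(\lambda)=n-c_P$, attained at $\lambda_1$, and set $E=\ker(P-\lambda_1I)$.
\begin{itemize}
\item Suppose $e\ge(n-1)/2-\tau$. For $\nu\ne\lambda_1^2$, the invariant subspace $\Lambda^2E$ gives rank at least $\binom e2$. For $\nu=\lambda_1^2$, the identity $(\Lambda^2P-\lambda_1^2)(u\wedge v)=\lambda_1u\wedge(P-\lambda_1I)v$ for $u\in E$ puts $E\wedge\im(P-\lambda_1I)$ in the image. With $j=\dim(E\cap\im(P-\lambda_1I))$, this subspace has dimension $ec_P-j(j+1)/2\ge\frac12\min(e,c_P)(\max(e,c_P)-1)$. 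Both bounds exceed $\tau n$ once $K\ge4\tau+1$ and $n$ is large, and no semisimplicity is used.
\item Suppose $e<(n-1)/2-\tau$. The equivariant embedding $x\wedge y\mapsto x\otimes y-y\otimes x$ carries $\ker(\Lambda^2P-\nu I)$ into $\{X:PX=X(\nu P^{-\mathsf T})\}$. Your own Jordan-block count bounds the dimension of that space by $ne$, so the rank is at least $n((n-1)/2-e)>\tau n$.
\end{itemize}
The invariant-alternating-form count in Lemma~\ref{lem:common-form-stabilizers} gives the sharper bound $n(c_P-2)/2$.

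Completed this way, your proof is shorter than the paper's: the target side costs one line, and only a single estimate about $\Lambda^2P$ is needed. The paper instead stays over $\F_q$ with cyclic blocks and treats $P$ and $Q$ symmetrically. It handles long blocks, many nontrivial blocks, many eigenvalues and a dominant eigenvalue pair by exhibiting explicit subspaces on which $A-I$ is injective. That avoids Jordan forms over $\overline{\F}_q$ and the intertwiner dimension formula.
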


For $q=2$, $Z=\{(I,I)\}$ and
\[
B_K=\{(P,Q):\rank(P-I)\leq K,\ \rank(Q-I)\leq K\},
\quad |B_K|\leq2^{2K(n+s)}.
\]
\begin{proof}
Choose integers $L\geq2,D\geq1$ with
\[
aL,\ L/2,\ aD/L^2,\ D/(2L^2)>4T,
\quad J=L^2,\quad K=J+D.
\]
All subsequent thresholds depend only on the fixed parameters $a,L,D,J,K$.
Write $A=(\Lambda^2P)\otimes Q$.
Regard $V,W$ as $\F_q[t]$-modules through $P,Q$ and take cyclic decompositions over the original field, obtained by Smith reduction of $tI-P,tI-Q$.
A length-$\ell$ block has a cyclic basis $e_1,\ldots,e_\ell$ with $Pe_i=e_{i+1}$ for $i<\ell$.
Call blocks of length at least two nontrivial; their polynomials need not split over the field.

If $P$ has a block of length $\ell\geq L+1$, let $C$ be the other blocks and let
\[
U=\langle e_1,\ldots,e_L\rangle,\quad
B_0=\langle e_{L+1},\ldots,e_{\ell-1}\rangle\oplus C.
\]
Empty intervals denote zero spaces.
Then $\dim B_0=n-L-1$ and
\[
P(B_0)=\langle e_{L+2},\ldots,e_\ell\rangle\oplus C,\quad
V=\langle e_1,\ldots,e_{L+1}\rangle\oplus P(B_0).
\]
A fixed vector in $(U\wedge B_0)\otimes W$ satisfies
\[
\sum_{i=1}^Le_{i+1}\wedge(P\otimes Q)x_i
=\sum_{i=1}^Le_i\wedge x_i,\quad x_i\in B_0\otimes W.
\]
Project first onto $(e_{L+1}\wedge P(B_0))\otimes W$ and then, in descending order, onto $(e_{i+1}\wedge P(B_0))\otimes W$.
Injectivity of $P\otimes Q$ on $B_0\otimes W$ gives all $x_i=0$.
Thus $A-I$ is injective on $(U\wedge B_0)\otimes W$, which need not be invariant, and
\[
\eta(P,Q)\geq L(n-L-1)s\geq aLn^2/2>Tn^2
\]
eventually.
A target block of length at least $L+1$ similarly gives injectivity on $\Lambda^2V\otimes\langle f_1,\ldots,f_L\rangle$: its last coefficient vanishes first, then the rest.
Thus $\eta(P,Q)\geq L\binom n2\geq Ln^2/4>Tn^2$.

We may assume all blocks have length at most $L$.
If $P$ has at least $L$ nontrivial blocks, select $L$ of them, let $U,F$ span their first and second vectors, and let $B$ be the unselected blocks.
Then $PU=F$, $P(B)=B$ and $\dim B\geq n-L^2$.
The subspaces $(U\wedge B)\otimes W$ and $(F\wedge B)\otimes W$ are disjoint, so $\eta(P,Q)\geq L(n-L^2)s>Tn^2$ eventually.
The analogous target argument gives $\eta(P,Q)\geq L\binom n2>Tn^2$ if $Q$ has at least $L$ nontrivial blocks.
Otherwise the nontrivial blocks of $P$ and of $Q$ each have total dimension less than $J$.
The one-dimensional blocks give diagonal subspaces $V_0,W_0$ with dimensions at least $n-J,s-J$.

All eigenvalues below lie in $\F_q^\times$.
If $P|_{V_0}$ has $L+1$ distinct eigenvalues, choose corresponding basis vectors $v_i$.
For every remaining source basis vector $u$ and target basis vector $w$, at most one of the independent tensors $v_i\wedge u\otimes w$ has eigenvalue one.
Hence $\eta(P,Q)\geq L(n-J-L-1)(s-J)>Tn^2$ eventually.
If $Q|_{W_0}$ has $L+1$ distinct eigenvalues, similarly $\eta(P,Q)\geq L\binom{n-J}{2}>Tn^2$.

There remain at most $L$ eigenvalues for each of $P|_{V_0}$ and $Q|_{W_0}$.
Choose maximal multiplicity eigenvalues $\alpha,\beta$, with eigenspace dimensions $r\geq(n-J)/L$, $u\geq(s-J)/L$.
If $\alpha^2\beta\ne1$, restriction to $\Lambda^2V_\alpha\otimes W_\beta$ gives
\[
\eta(P,Q)\geq\binom r2u\geq an^3/(32L^3)>Tn^2
\]
eventually.
Otherwise $\alpha^2\beta=1$.
If at least $D$ basis vectors of $V_0$ have eigenvalue different from $\alpha$, tensor their span with $V_\alpha,W_\beta$ to obtain $\eta(P,Q)\geq Dru\geq D(n-J)(s-J)/L^2>Tn^2$.
If at least $D$ target basis vectors have eigenvalue different from $\beta$, the bound is $\eta(P,Q)\geq D\binom r2\geq Dn^2/(4L^2)>Tn^2$.
For example $n\geq4(J+L)$ ensures $r,r-1\geq3n/(4L)$.
Thus $\eta<Tn^2$ leaves fewer than $D$ exceptional diagonal vectors on each side, giving kernel dimensions at least $n-K,s-K$ after adding the nontrivial blocks.
All thresholds are uniform in $q,s$.

For a near-scalar pair let $E=\ker(P-\alpha I)$.
If $Q\ne\beta I$, choose $(Q-\beta I)w\ne0$.
For $z\in\Lambda^2E$,
\[
(A-I)(z\otimes w)=\alpha^2z\otimes(Q-\beta I)w,
\]
so $\eta(P,Q)\geq\binom{n-K}{2}$.
If $Q=\beta I$ but $P\ne\alpha I$, choose $v\notin E$ and $y=(P-\alpha I)v\ne0$.
Then
\[
(A-I)((u\wedge v)\otimes w)=\alpha^{-1}(u\wedge y)\otimes w
\quad(u\in E,w\in W).
\]
The input map in $u$ is injective; the output kernel $E\cap\langle y\rangle$ has dimension at most one.
Thus $\eta(P,Q)\geq(n-K-1)s$.
Increasing the threshold makes these bounds at least $n^2/8$ and $an^2/2$, hence at least $\kappa_0n^2$.
Compatible scalar pairs are exactly $Z$ and fix the whole tensor space.
Exterior products make these arguments valid in characteristic two.

A rank-at-most-$K$ matrix factors through a $K$-space, giving at most $q^{2nK}$ source choices and $q^{2sK}$ target choices.
Multiplication by the $q-1$ compatible scalar choices proves the size bound.
At $q=2$ the only compatible scalars are both one.
\end{proof}

\begin{lemma}\label{lem:fixed-space-moments}
Let a finite group $\Gamma$ act linearly on a finite $\F_q$-space $X$.
Write $Z$ for the kernel of the action and $\eta(g)=\operatorname{codim}_X\operatorname{Fix}(g)$.
For every integer $h\geq1$ and uniform $x\in X$,
\[
\mathbf E|\Gamma_x|^h
 =\sum_{g_1,\ldots,g_h\in\Gamma}
 q^{-\operatorname{codim}_X(\bigcap_i\operatorname{Fix}(g_i))}.
\]
If $Z\subseteq B\subseteq\Gamma$, $\eta\geq L$ on $\Gamma\setminus B$ and $\eta\geq H$ on $B\setminus Z$, where $L,H\geq0$, then
\[
 |Z|^h\leq\mathbf E|\Gamma_x|^h
 \leq |Z|^h+|\Gamma|^h q^{-L}+|B|^h q^{-H}.
\]
\end{lemma}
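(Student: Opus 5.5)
The plan is to expand the $h$-th power of the stabilizer order as a sum over $h$-tuples of group elements, and then to split that sum according to where the tuple lies relative to $Z$ and $B$.

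For the identity, write $|\Gamma_x|^h=\sum_{g_1,\ldots,g_h}\prod_i\mathbf 1[g_ix=x]$. The product equals $\mathbf 1[x\in\bigcap_i\operatorname{Fix}(g_i)]$. Each $\operatorname{Fix}(g_i)$ is a subspace, because the action is linear, so the intersection is a subspace $S$. Averaging over uniform $x$ and exchanging the finite sums gives $\sum_{g_1,\ldots,g_h}|S|/|X|$. Since $|S|/|X|=q^{-\operatorname{codim}_X S}$, this is exactly the claimed formula.

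For the bounds, every summand is positive.

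The tuples with all $g_i\in Z$ each contribute $q^0=1$, because elements of the kernel fix all of $X$. These $|Z|^h$ tuples give the lower bound.

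For the upper bound, split the remaining tuples into two classes.

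The first class consists of tuples with some $g_i\notin B$. For such a tuple, $\bigcap_j\operatorname{Fix}(g_j)\subseteq\operatorname{Fix}(g_i)$, so the codimension is at least $\eta(g_i)\geq L$. The contribution is therefore at most $q^{-L}$, and there are at most $|\Gamma|^h$ such tuples.

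The second class consists of tuples with all $g_j\in B$ but some $g_i\in B\setminus Z$. Here the codimension is at least $\eta(g_i)\geq H$. There are at most $|B|^h$ such tuples, each contributing at most $q^{-H}$.

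These classes, together with the all-$Z$ tuples, exhaust $\Gamma^h$. Summing the three estimates gives $\mathbf E|\Gamma_x|^h\le |Z|^h+|\Gamma|^hq^{-L}+|B|^hq^{-H}$.

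Nothing here is a real obstacle. The only points needing care are the monotonicity of codimension under intersection, and the fact that $Z$ acts trivially, so its tuples contribute exactly $1$.
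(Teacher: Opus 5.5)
Your proposal is correct and follows the same approach as the paper: you expand the stabilizer indicators into a sum over $h$-tuples, then split $\Gamma^h$ into $Z^h$, tuples with a member outside $B$, and the remaining tuples in $B^h\setminus Z^h$. Each class is bounded using the fact that a common fixed space has codimension at least that of any single member.
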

\begin{proof}
Expand stabilizer indicators.
Separate $Z^h$, other tuples containing a member with $\eta\geq L$, and the rest, in $B^h\setminus Z^h$.
The three classes of tuples give the three terms in the upper bound because a common fixed space has codimension at least that of any member, without additivity or independence.
\end{proof}

\begin{proof}[\rm\textbf{Proof of Proposition~\ref{prop:tensor-stabilizers}}]
Use Lemma~\ref{lem:tensor-rank} with $M=1+b^2$, $T=M+1$.
The kernel of this tensor action satisfies $|Z|=q-1$, and $|\Gamma|\leq q^{n^2+s^2}\leq q^{Mn^2}$.
Lemma~\ref{lem:fixed-space-moments}, with $h=1$, $B=B_K$, $L=Tn^2$ and $H=\kappa_0n^2$, bounds the nonnegative excess above $q-1$ by $q^{-n^2}+q^{2+2K(n+s)-\kappa_0n^2}$.
The second term is eventually at most $q^{-\kappa_0n^2/2}$.
Increasing the threshold and using $q\geq2$ bounds the sum by $q^{-cn^2}$ with $c=\kappa_0/4$, uniformly in $q,s$.
\end{proof}

For denominator estimates let $a_{p,n}=\prod_{i=1}^n(1-p^{-i})$ and $a_{p,0}=1$.
Induction gives $\prod_i(1-u_i)\geq1-\sum_i u_i$ for $0\leq u_i\leq1$: the last multiplication adds the nonnegative correction $u_r\sum_{i<r}u_i$.
Thus $a_{p,n}\downarrow g_p$ and
\[
 g_p\geq(1-p^{-1})(1-p^{-2})
       \left(1-\frac1{p^2(p-1)}\right)\geq\frac9{32}>\frac14,
 \quad g_p\leq1-p^{-1}.
\]
Expanding the last two factors also gives $g_p\geq(1-p^{-1})(1-1/[p(p-1)])$.
For $N\geq n\geq0$, let $T_{p,n}=g_p/a_{p,n}$ and $G_p=g_p^{-1}$.
The same product inequality and passage to the limit give
\begin{equation}\label{eq:product-bounds}
 \begin{aligned}
 0\leq1-\prod_{i=n+1}^N(1-p^{-i})&\leq\frac{p^{-n}}{p-1},\\
 0\leq1-T_{p,n}&\leq\frac{p^{-n}}{p-1},\quad 0<T_{p,n}\leq1,\quad G_p<4.
 \end{aligned}
\end{equation}
In particular, $|\GL(n,p)|=p^{n^2}a_{p,n}\geq p^{n^2}/4$.

\begin{corollary}\label{cor:parameter-errors}
Given $0<a\leq b$, there are $c>0,d_0$ such that for all primes $p$, $d\geq d_0$ and $ad\leq s\leq bd$,
\[
0\leq\Delta_{p,d,s}\leq p^{-cd^2}.
\]
For $1\leq s\leq D_d$, $0\leq1-\sigma_{p,d,s}\leq p^{s-D_d}/(p-1)$.
\end{corollary}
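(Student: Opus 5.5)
The estimate for $\Delta_{p,d,s}$ will come from Proposition~\ref{prop:tensor-stabilizers} with $q=p$ and $n=d$. The only point needing care is that the paper's action on $\mathscr B_{p,d,s}=\Hom(\Lambda^2V,W)$ is the twisted action $\beta\mapsto Q\beta\Lambda^2(P^{-1})$, not the natural action on $\Lambda^2V\otimes W$.

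First I identify the two actions. The map $\Hom(\Lambda^2V,W)\cong(\Lambda^2V)^*\otimes W\cong\Lambda^2V^*\otimes W$ is $\Gamma$-equivariant once we let $P$ act on $V^*$ by $P^{-T}$. The automorphism $(P,Q)\mapsto(P^{-T},Q)$ of $\GL(d,p)\times\GL(s,p)$ then transports the paper's action onto the natural action on $X=\Lambda^2\F_p^d\otimes\F_p^s$. This transport is a bijection of $X$ carrying stabilizers to stabilizers of the same order, so
\[
\frac1{|\mathscr B_{p,d,s}|}\sum_\beta|\operatorname{Stab}_{\Gamma_{p,d,s}}(\beta)|
=\frac1{|X|}\sum_{x\in X}|\Gamma_x|.
\]
Proposition~\ref{prop:tensor-stabilizers} with $n=d$ gives constants $c,n_0$ depending only on $a,b$. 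For $d\geq d_0:=n_0$ and $ad\leq s\leq bd$ it yields $\Delta_{p,d,s}=\text{average}-(p-1)\in[0,p^{-cd^2}]$. Uniformity over all primes holds because the proposition is uniform in $q$.

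There is one compatibility issue. The definition of $\Delta$ presupposes $s\leq D_d$, whereas $s\leq bd$ can exceed $D_d$ when $d$ is small. Enlarging $d_0$ so that $bd_0\leq D_{d_0}$ settles this, since $D_d\sim d^2/2$. In any case the identity above and the bound do not use $s\leq D_d$. Nonnegativity was already shown in Lemma~\ref{lem:parameter-counts}, and it also follows from the lower bound in Proposition~\ref{prop:tensor-stabilizers}.

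For the second claim I use the product inequality $\prod(1-u_i)\geq1-\sum u_i$ recorded just before the corollary, with $u_i=p^{i-D_d}\in(0,1]$ for $0\leq i\leq s-1<D_d$. This gives
\[
1-\sigma_{p,d,s}\leq\sum_{i=0}^{s-1}p^{i-D_d}=p^{-D_d}\frac{p^s-1}{p-1}\leq\frac{p^{s-D_d}}{p-1}.
\]
The lower bound $1-\sigma_{p,d,s}\geq0$ holds because every factor lies in $[0,1]$. The only real step is the equivariant identification of the two actions; the remaining steps are bookkeeping.
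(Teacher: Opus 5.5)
Your proposal is correct and follows the paper's own proof. Like the paper, you transport the action $\beta\mapsto Q\beta\Lambda^2(P^{-1})$ to the natural tensor action via the automorphism $P\mapsto P^{-T}$, apply Proposition~\ref{prop:tensor-stabilizers} with $q=p$, and bound $1-\sigma_{p,d,s}$ by the product inequality; your remark about enlarging $d_0$ so that $bd\leq D_d$ is harmless extra care that the paper leaves implicit.
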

\begin{proof}
The identification $\Hom(\Lambda^2V,W)\cong(\Lambda^2V^*)\otimes W$ turns the source action into $(P^{-1})^*$, or $P^{-T}$ in coordinates.
The map $P\mapsto P^{-T}$ is a group isomorphism, since $(P_1P_2)^{-T}=P_1^{-T}P_2^{-T}$, and preserves stabilizer orders.
Proposition~\ref{prop:tensor-stabilizers} with $q=p$ gives the first bound.
The preceding product inequality gives
\[
1-\sigma_{p,d,s}\leq\sum_{i=0}^{s-1}p^{i-D_d}
=\frac{p^{s-D_d}-p^{-D_d}}{p-1}\leq\frac{p^{s-D_d}}{p-1}.\qedhere
\]
\end{proof}
\subsection{Localization for all finite \texorpdfstring{$p$}{p}-groups}
\label{subsec:generator-localization}

We use the presentation count in \cite[Chapter~5]{BNV2007}, based on Sims's method \cite{Sims1965} and the alternating-map refinement attributed there to Newman and Seeley \cite[p.~33]{BNV2007}.
Keeping the generator number in these estimates gives the following localization after a parameter-dependent optimization.

\begin{proposition}\label{prop:generator-localization}
Fix $p$.
For each $\varepsilon>0$, the number of order-$p^m$ isomorphism classes with $|d(G)/m-2/3|\geq\varepsilon$ is at most
\[
p^{(\frac{2}{27}-c_\varepsilon)m^3+O(m^{\frac{8}{3}})},\quad
c_\varepsilon=\min\{7/3375,\ 7\varepsilon^2/15\}>0.
\]
The error constant is independent of $\varepsilon$.
\end{proposition}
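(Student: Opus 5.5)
We would obtain the bound by redoing the Sims-type presentation count of \cite[Chapter~5]{BNV2007} while keeping the generator number $d=d(G)$ as a parameter. We then optimize the resulting exponent over the admissible parameter region with $|d/m-2/3|\geq\varepsilon$.

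\textbf{Step 1: the count with $d$ kept.} Fix $d$. Every group of order $p^m$ with $d(G)=d$ is a quotient $F_d/R$. Refine the lower exponent-$p$ central series of $G$ to a central series with factors of order $p$, with layer dimensions $d$, $s=\dim\gamma_2^{(p)}/\gamma_3^{(p)}$, and $t$ for the rest, so $m=d+s+t$. Sims's method counts $G$ through a relative presentation: a generating set of $R$ modulo a characteristic subgroup of controlled index. Each generator of $R$ is determined by its image in $F_d/(\text{a term of the series})$, which contributes at most $p^{\text{(size of that layer)}}$ choices.

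Following the book's argument, the count of groups with parameters $(d,s,t)$ splits into three contributions:
\begin{itemize}
\item[(i)] the root $Q(G)$, at most $p^{sD_d}$ by Lemma~\ref{lem:root-generator-tail};
\item[(ii)] the relations defining $\gamma_3^{(p)}(G)$ inside the covering of $Q(G)$, contributing at most $p^{O(dst)}$, refined using the Newman--Seeley alternating-map trick to roughly $p^{dst+\binom{?}{}}$-type terms;
\item[(iii)] the choices for the remaining layers, governed by Sims's $O(m^{5/2})$ estimate applied to a group with $t$ small.
\end{itemize}
Concretely, we aim for
\[
\log_p\#\{G:d(G)=d,\ \text{layers }(s,t)\}\leq \frac{d^2s}{2}+\kappa\,dst+\frac{2t^3}{27}\cdot(\text{const})+O(m^{8/3}),
\]
with an explicit $\kappa$, uniform in $d,s,t$. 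Summing over the $O(m^2)$ choices of $(d,s,t)$ costs only $O(\log m)$ in the exponent.

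\textbf{Step 2: optimization.} Write $d=xm$, $s=ym$, $t=zm$ with $x+y+z=1$. When $z$ is small, the leading term is $x^2y/2\leq x^2(1-x)/2$. By the identity in the proof of Lemma~\ref{lem:root-generator-tail},
\[
\frac{2}{27}-\frac{x^2(1-x)}{2}=\frac{(x-2/3)^2(x+1/3)}{2}\geq\frac{\varepsilon^2}{6}.
\]
The $t$-dependent terms perturb this. We would show that for $z\leq z_0$ (a small absolute constant) they are absorbed, which gives the quadratic saving $7\varepsilon^2/15$ after accounting for the loss of $y$ relative to $1-x$. For $z\geq z_0$ we would show that the full exponent is at most $2/27-7/3375$ regardless of $x$. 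This is where the absolute cap in $c_\varepsilon$ comes from: a nontrivial third layer is strictly penalized, and we expect $7/3375$ to arise as the value of a cubic at $z_0$ of size about $1/15$. The $O(m^{8/3})$ term absorbs the lower-order terms from Sims's lemma, such as the number of relations beyond the leading layer and the $p^{O(m^{5/2})}$ count of series refinements, balanced at threshold $z\sim m^{-1/3}$.

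\textbf{Main obstacle.} The hardest step is Step 1: extracting from Sims's proof a bound whose leading cubic form in $(d,s,t)$ is exact enough that its maximum, off the line $t=0$, falls strictly below $2/27$. The crude Sims bound $2m^3/27$ loses exactly the structure needed. I would first try to bound the contribution of $\gamma_3^{(p)}$ by counting homomorphisms from the relation module into a $t$-dimensional space, giving roughly $dst+st\cdot\binom{d}{2}/\!$(normalization). I would then verify via the concavity of the cubic form that its supremum over $z\geq z_0$ is attained at the boundary $z=z_0$.
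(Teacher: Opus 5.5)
Your proposal has a genuine gap in both steps.

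\textbf{Step~1 never produces a bound you can optimize.} Items (ii) and (iii) are placeholders. The presentation count of \cite[Chapter~5]{BNV2007} is not organized along the lower exponent-$p$ series $(d,s,t)$. Sims's estimate for a group of order $p^t$ counts groups, not the ways of attaching the layers of $\gamma_3^{(p)}$. What the restricted presentation theorem and the commutator count (5.12)--(5.14) do give keeps the lower-central ranks $d=r_1$, $b=r_2$ and $u=m-d-b$. It also keeps one parameter you are missing: the least dimension $\sigma$ of a subspace $U\leq G/\Phi(G)$ whose commutators span $G'/((G')^p\gamma_3(G))$. With $q=\sigma-1$,
\[
\log_pT_p(m;d,b,\sigma)\leq N_p(d,b,\sigma)+\tfrac12d^2u+\bigl(qb-\tfrac12q^2\bigr)u+\tfrac12qu^2+O(m^2).
\]
Here the Newman--Seeley count satisfies both $N_p\leq d^2(b-q)/2+O(m^{8/3})$ and $N_p\leq d^2(b-q)/2+dbq/2+O(m^{5/2})$.

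\textbf{Step~2 splits the region the wrong way.} In this bound each of the $u$ lower dimensions costs $d^2/2$ at leading order. So for $q=0$ the exponent is $x^2(1-x)/2$ however $1-x$ is divided between the second layer and the rest. Your claim that $z\geq z_0$ forces the exponent below $2/27-7/3375$ for every $x$ does not come out of this machinery. Even the paper's much finer descendant count charges about $D_d-s\approx d^2/2$ per layer of $\gamma_3^{(p)}$, the same as a Frattini layer. This is also not where $7/3375$ comes from.

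\textbf{The real difficulty is small $d$.} There the $q$-terms can push the exponent above $f(x)=x^2(1-x)/2$. Normalize $x=d/m$, $y=b/m$, $z=u/m$, $w=q/m$; the paper splits at $x=3/5$.

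For $x\geq3/5$, the second estimate gives a leading polynomial $\mathcal A$ with $\partial_w\mathcal A<0$, so $\mathcal A\leq f(x)$ and
\[
\tfrac2{27}-f(x)=\tfrac12(x-2/3)^2(x+1/3)\geq\tfrac7{15}\varepsilon^2 .
\]
Thus $7/15=(3/5+1/3)/2$. It is not a correction for $y<1-x$; your identity taken over all $x$ only yields $\varepsilon^2/6$.

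For $x\leq3/5$, the first estimate gives
\[
\mathcal B=f(x)+w(1-2x-y^2)/2-w^2(1-x-y)/2 .
\]
This is bounded by $f(3/5)=9/125$ on $[1/2,3/5]$ and by $1/16$ below.

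Hence $7/3375=2/27-9/125$ is the gap of the low-generator region. The $O(m^{8/3})$ is simply the error term of the first Newman--Seeley estimate, not the result of balancing a threshold $z\sim m^{-1/3}$.
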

\begin{proof}
For a nonabelian group $G$ of order $p^m$, let
\[
r_i=d(\gamma_i(G)/\gamma_{i+1}(G)),\quad d=r_1=d(G),\quad b=r_2.
\]
The $r_i$ are minimal generator numbers, not logarithmic orders.
Set
\[
V=G/\Phi(G),\quad W=G'/((G')^p\gamma_3(G)).
\]
Then $\dim V=d$ and $\dim W=b\geq1$.
The commutator induces a surjective linear map $\beta:\Lambda^2V\to W$.
Its underlying bilinear map is alternating in every characteristic.
Let $\sigma$ be the least dimension of $U\leq V$ such that $\beta(\Lambda^2U)=W$.
Corollary~5.3 of \cite[p.~33]{BNV2007} gives
\[
2\leq\sigma\leq\min(d,b+1),\quad
q=\sigma-1,\quad u=m-d-b\geq0,\quad 0\leq q\leq\min(d,b).
\]

Write $N_p(d,b,\sigma)$ for the base-$p$ logarithm of the number of labeled alternating maps $\Lambda^2\F_p^d\to\F_p^b$ with this minimal surjective-subspace dimension.
This is the notation of \cite[Proposition~5.5, pp.~33--34]{BNV2007}, with $(r_1,r_2,s)=(d,b,\sigma)$.
Equations~(5.5) and~(5.6) of \cite{BNV2007} give
\[
\begin{aligned}
N_p(d,b,\sigma)&\leq d^2(b-q)/2+O((d+b)^{\frac{8}{3}}),\\
N_p(d,b,\sigma)&\leq d^2(b-q)/2+dbq/2+O((d+b)^{\frac{5}{2}}).
\end{aligned}
\]
We apply the two estimates only to nonempty strata, so $N_p(d,b,\sigma)$ is defined.

Let $T_p(m;d,b,\sigma)$ count the nonabelian order-$p^m$ isomorphism classes with parameters $(d,b,\sigma)$.
The restricted presentation theorem \cite[Theorem~5.6, p.~38; construction and proof, pp.~35--40]{BNV2007} assigns to every such group structural indices, power data and a specified subset of commutator data which together determine its isomorphism class.
Counting the possible data therefore gives an upper bound on the number of isomorphism classes, even though some choices of data need not define a group.
The proof of Theorem~5.7 bounds all choices of structural indices and power data by $2^{3m-2}p^{m^2}$ \cite[p.~41]{BNV2007}.
Restricting $d,b,\sigma$ can only reduce the number of choices of structural indices and power data.

For each fixed choice of structural indices and power data, the commutator count retains $r_1,r_2,s$ throughout \cite[pp.~42--43, equations~(5.12)--(5.14)]{BNV2007}.
Substituting $(r_1,r_2,s-1,m-r_1-r_2)=(d,b,q,u)$ into (5.14), and absorbing the preceding structural and power factor into the quadratic error, gives, whenever $T_p(m;d,b,\sigma)>0$,
\[
\log_p T_p(m;d,b,\sigma)
\leq N_p(d,b,\sigma)+\frac12d^2u
+\left(qb-\frac12q^2\right)u+\frac12qu^2+O(m^2).
\]
The summation over the nilpotency class, the higher-layer generator numbers, cyclic lengths and intermediate image dimensions is already included in this bound.
For fixed $p$, the error constants in the two estimates for $N_p(d,b,\sigma)$ and the estimate for $\log_p T_p(m;d,b,\sigma)$ are uniform over the admissible parameters; in particular the constants do not depend on $\varepsilon$.

Normalize by $x=d/m$, $y=b/m$, $z=u/m$, $w=q/m$, so $x+y+z=1$, $0\leq w\leq\min(x,y)$.
The leading polynomials are
\[
\begin{aligned}
\mathcal A&=x^2(y+z-w)/2+xyw/2+(wy-w^2/2)z+wz^2/2,\\
\mathcal B&=x^2(y+z-w)/2+(wy-w^2/2)z+wz^2/2.
\end{aligned}
\]
For $x\geq3/5$, eliminating $y$ gives
\[
\partial_w\mathcal A=x/2-x^2+(1-3x/2)z-z^2/2-wz<0.
\]
For $x\geq2/3$, the negativity of $\partial_w\mathcal A$ follows from the signs; for $3/5\leq x\leq2/3$, completion of the square in $z$ bounds $\partial_w\mathcal A$ by $1/2-x+x^2/8\leq-11/200$.
Thus $\mathcal A\leq f(x)=x^2(1-x)/2$.
For $x\leq3/5$,
\[
\mathcal B=f(x)+w(1-2x-y^2)/2-w^2(1-x-y)/2.
\]
When $1/2\leq x\leq3/5$, $\mathcal B$ is at most $f(3/5)=9/125$.
For $x\leq1/2$, $\mathcal B$ decreases with $y\geq w$, so take $y=w$ and optimize the resulting concave quadratic.
If $x\leq1-1/\sqrt2$, the bound is $x/2-x^2\leq1/16$; otherwise it is
\[
f(x)+\frac{(1-2x)^2}{8(1-x)}\leq\frac1{16}.
\]
After multiplication by $16(1-x)$ this is equivalent to $(2x-1)(4x^3-6x^2+5x-1)\leq0$.
The second factor is positive on this interval: its derivative is $12(x-1/2)^2+2$ and its left endpoint value is $5-7/\sqrt2>0$.
Thus the low-generator region has gap $2/27-9/125=7/3375$.
In the high-generator region use $2/27-f(x)=\tfrac12(x-2/3)^2(x+1/3)$ to obtain $7\varepsilon^2/15$.
Sum over polynomially many parameters.
Abelian types correspond to partitions and are at most $2^m$ by comparison with compositions.
Summing over the parameter choices and including the abelian groups preserve the uniform error bound.
\end{proof}
\section{Covers, nuclei and weighted root counts}\label{sec:covers-nuclei}
\subsection{Covers and central extensions}
\label{subsec:covers}

All cohomology in what follows has trivial coefficients $\F_p$; write $r(Y)=\dim H^2(Y,\F_p)$.
For a nontrivial root $Q$, take a minimal free presentation $F/R=Q$ and define
\[
K=F/(R^p[R,F]),\quad M=R/(R^p[R,F]),\quad N=\gamma_3^{(p)}(K),
\quad \nu(Q)=\dim N.
\]
The group $N$ is the nucleus used throughout the proof.
Write $d=d(Q)$ and $s=\dim\Phi(Q)$.

\begin{lemma}\label{lem:root-cover}
The group $K$ is finite, $M$ is central elementary abelian, $M\leq\Phi(K)$, and
\[
\gamma_4^{(p)}(K)=1,\quad K/N\cong U_d,\quad r(Q)=D_d-s+\nu(Q).
\]
\end{lemma}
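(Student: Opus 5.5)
Let $F$ be free on $d$ generators with $F/R=Q$ minimal, so $R\leq\Phi(F)=F^p[F,F]$. Since $Q$ is a root, $\gamma_3^{(p)}(F)\leq R$. Conversely, $\Phi(Q)=\Phi(F)R/R$, so $R\leq\Phi(F)$ and $\Phi(F)/R\cong\Phi(Q)$.

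The plan is to reduce to standard facts about the $p$-covering group. The steps are taken in the following order.

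\emph{Structure of $K$ and $M$.} $M=R/R^p[R,F]$ is central in $K$ and has exponent $p$ by construction, and $M\leq\Phi(F)/R^p[R,F]=\Phi(K)$. Finiteness follows because $K$ is a central extension of the finite group $Q$ by $M$. Moreover $M$ is finitely generated as an abelian group: $R$ has finite index in the finitely generated group $F$, so $R$ is finitely generated by Schreier. Hence $M$ is finite elementary abelian and $|K|<\infty$.

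\emph{The identity $\gamma_4^{(p)}(K)=1$.} Since $\gamma_3^{(p)}(F)\leq R$, we get $\gamma_4^{(p)}(F)=\gamma_3^{(p)}(F)^p[\gamma_3^{(p)}(F),F]\leq R^p[R,F]$.

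\emph{The isomorphism $K/N\cong U_d$.} Here $K/N=F/\gamma_3^{(p)}(F)R^p[R,F]=F/\gamma_3^{(p)}(F)$, because $R^p[R,F]\leq\gamma_4^{(p)}(F)\leq\gamma_3^{(p)}(F)$. It remains to identify $F/\gamma_3^{(p)}(F)$ with $U_d=F/F^{p^2}\gamma_2(F)^p\gamma_3(F)$. The quotient $F/\gamma_3^{(p)}(F)$ is a root generated by $d$ elements, so it is a quotient of $U_d$. Conversely $\gamma_3^{(p)}(U_d)=\Phi(U_d)^p[\Phi(U_d),U_d]=1$ by Lemma~\ref{lem:free-root}, so the defining subgroup of $U_d$ contains $\gamma_3^{(p)}(F)$. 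This gives equality.

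\emph{The dimension count.} Use the Hopf-type formula $H^2(Q,\F_p)\cong\Hom(R/R^p[R,F],\F_p)$ modulo the image of $\Hom(F,\F_p)$. The five-term sequence of $1\to R\to F\to Q\to1$ reads
\[0\to H^1(Q)\to H^1(F)\to H^1(R)^F\to H^2(Q)\to H^2(F)=0.\]
Minimality makes $H^1(Q)\to H^1(F)$ an isomorphism, both of dimension $d$. Hence $H^2(Q)\cong H^1(R)^F=\Hom(M,\F_p)$, and $r(Q)=\dim M$.

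\emph{Computing $\dim M$.} Using the chain $R^p[R,F]\leq\gamma_3^{(p)}(F)\leq R\leq\Phi(F)$,
\[\dim M=\log_p|R/\gamma_3^{(p)}(F)|+\log_p|\gamma_3^{(p)}(F)/R^p[R,F]|.\]
The second term is $\nu(Q)$. The first term is $\log_p|\Phi(F)/\gamma_3^{(p)}(F)|-\log_p|\Phi(F)/R|=\dim W_d-s=D_d-s$. Summing gives $r(Q)=D_d-s+\nu(Q)$.

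The main obstacle is the careful justification that $H^1(R)^F$ equals $\Hom(M,\F_p)$ and that the transgression sequence applies with trivial coefficients. A cleaner route is to cite the standard identification of the $p$-multiplicator $R/R^p[R,F]$ with the dual of $H^2(Q,\F_p)$ for minimal presentations, as in O'Brien's $p$-group generation algorithm. The other steps are direct checks.
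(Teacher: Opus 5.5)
\textbf{One inclusion is stated backwards.} In the step for $K/N\cong U_d$ you write $R^p[R,F]\leq\gamma_4^{(p)}(F)\leq\gamma_3^{(p)}(F)$. The first inclusion is false in general; one line earlier you proved the reverse, $\gamma_4^{(p)}(F)\leq R^p[R,F]$. For instance, if $Q=\F_p^d$ then $R=\gamma_2^{(p)}(F)$ and $R^p[R,F]=\gamma_3^{(p)}(F)\not\leq\gamma_4^{(p)}(F)$.

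What you actually need is $R^p[R,F]\leq\gamma_3^{(p)}(F)$, and it follows in one line from minimality: $R\leq\gamma_2^{(p)}(F)$ gives $R^p[R,F]\leq\gamma_2^{(p)}(F)^p[\gamma_2^{(p)}(F),F]=\gamma_3^{(p)}(F)$. This is exactly how the paper argues. With this correction, the following go through:
\begin{itemize}
\item $K/N=F/\gamma_3^{(p)}(F)$, which is finite as a quotient of $K$;
\item $N=\gamma_3^{(p)}(F)/R^p[R,F]$;
\item your order count $\dim M=(D_d-s)+\nu(Q)$.
\end{itemize}
Writing $\Phi(F)$ for $F^p[F,F]$ in a free group also clashes with the actual Frattini subgroup; $\gamma_2^{(p)}(F)$ is safer notation.

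\textbf{Comparison with the paper.} The genuine difference is how you obtain $r(Q)=\dim M$.

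You use the five-term inflation--restriction sequence for $1\to R\to F\to Q\to1$. Since $H^2(F,\F_p)=0$ and minimality makes $H^1(Q)\to H^1(F)$ an isomorphism, transgression gives $H^2(Q)\cong\Hom(R,\F_p)^F$. The step you flag as the main obstacle is not one: an $F$-invariant homomorphism $R\to\F_p$ is exactly one vanishing on $R^p[R,F]$.

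The paper instead works directly with central extensions. For $\theta\in M^*$ it forms the pushout $E_\theta$ of $K$ and shows $\theta\mapsto[E_\theta]$ is:
\begin{itemize}
\item injective, because a splitting would give a homomorphism $K\to\F_p$ extending $\theta$, and such a map must vanish on $\Phi(K)\geq M$;
\item surjective, because any central exponent-$p$ extension of $Q$ receives a map from $F$ that factors through $K$.
\end{itemize}

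Your route is shorter but relies on Lyndon--Hochschild--Serre and the cohomological dimension of free groups. The paper's route is elementary. More importantly, it produces the explicit dictionary between $M^*$ and extension classes, which is reused in Lemma~\ref{lem:allowable-extensions}: there the pushouts killing $N$ are identified with the $\gamma_3^{(p)}$-trivial extensions.

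Your explicit verification that $F/\gamma_3^{(p)}(F)\cong U_d$ fills in a step the paper only asserts.
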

\begin{proof}
We identify $H^2(Q,\F_p)$ with the space of equivalence classes of central extensions of $Q$ by $\F_p$, with the kernel and quotient fixed.
The zero class corresponds to the split extension, and nonzero scalar multiples give isomorphic middle groups.

The subgroup $R$ is finitely generated.
To see this, let $u_q$ be word representatives for $Q$.
The elements $u_qx_i u_{qq_i}^{-1}$ generate $R$, by inserting representatives at successive prefixes of a relation word and telescoping.
Thus $M$ is finite elementary abelian and central in $K$; since $K/M=Q$, the group $K$ is finite.
The group $K$ is generated by $d$ elements and maps onto the $d$-generated group $Q$, so the induced map $K/\Phi(K)\to Q/\Phi(Q)$ is an isomorphism and $M\leq\Phi(K)$.

For $\theta:M\to\F_p$, form the pushout
\[
E_\theta=(K\times\F_p)/\{(z,-\theta(z)):z\in M\}.
\]
Section cocycles make $\theta\mapsto[E_\theta]$ linear.
If the extension $E_\theta$ is split, projection onto its distinguished kernel gives a homomorphism $K\to\F_p$ restricting to $\theta$ on $M$.
Such a homomorphism kills $\Phi(K)$, hence $\theta=0$.
Conversely, lifts of the chosen generators in any central extension give a map $F\to E$.
The relators map into a central exponent-$p$ kernel, so the map factors through $K$.
The pushout associated with the induced map $M\to\F_p$ has the same finite order as $E$, and the induced map from the pushout to $E$ is the identity on kernel and quotient; hence the two extensions are equivalent.
Consequently $H^2(Q,\F_p)\cong M^*$.
Minimality gives $R\leq \gamma_2^{(p)}(F)$, so $R^p[R,F]\leq \gamma_3^{(p)}(F)$ and $K/\gamma_3^{(p)}(K)\cong F/\gamma_3^{(p)}(F)\cong U_d$.
Since $\gamma_3^{(p)}(Q)=1$, $N\leq M$; thus $N$ is central elementary abelian and $\gamma_4^{(p)}(K)=1$.
Comparing orders gives $\dim M-\dim N=D_d-s$.
\end{proof}

For a finite $p$-group $Y$ with $Y/\gamma_3^{(p)}(Y)\cong Q$, define
\[
L(Y)=\im\bigl(
H^2(Y/\Phi(Y),\F_p)\xrightarrow{\mathrm{inf}}H^2(Y,\F_p)\bigr),
\quad h(Y)=r(Y)-(D_d-s).
\]
Call the classes in $H^2(Y)\setminus L(Y)$ \emph{allowable}.
This set complement is the union of the affine cosets corresponding to nonzero elements of $H^2(Y)/L(Y)$, not a linear complement.

\begin{lemma}\label{lem:allowable-extensions}
A central extension $1\to A\to E\to Y\to1$ with $|A|=p$ represents a class in $L(Y)$ if and only if its kernel $A$ is not contained in $\gamma_3^{(p)}(E)$.
Moreover, $\dim L(Y)=D_d-s$ and $h(Y)\geq0$.
The number of allowable scalar lines is
\[
p^{D_d-s}\frac{p^{h(Y)}-1}{p-1}.
\]
\end{lemma}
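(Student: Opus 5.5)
The plan is to work through the identification $H^2(Y,\F_p)\cong M_Y^*$ from the proof of Lemma~\ref{lem:root-cover}, applied now to $Y$ in place of $Q$. Take a minimal free presentation $F/R_Y=Y$. The same pushout argument shows that classes correspond to linear functionals $\theta$ on $M_Y=R_Y/R_Y^p[R_Y,F]$, with the extension $E_\theta$ a quotient of $K_Y=F/R_Y^p[R_Y,F]$ when $\theta\neq0$. Since $d(Y)=d$ and $Y/\gamma_3^{(p)}(Y)\cong Q$, minimality gives $K_Y/\gamma_3^{(p)}(K_Y)\cong U_d$, exactly as before.

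\textbf{Step 1: inflation.} First I would describe $L(Y)$. Inflation from $V=Y/\Phi(Y)$ corresponds, dually, to restricting functionals on $M_V=R_V/R_V^p[R_V,F]$ along $M_Y\to M_V$. Here $R_V=\gamma_2^{(p)}(F)$, and $M_V$ is the image of $R_V$ in $F/\gamma_3^{(p)}(F)$, that is, $W_d$. So the map $M_Y\to M_V=W_d$ is the natural one,
\[
M_Y\;\longrightarrow\; R_Y\gamma_3^{(p)}(F)/\gamma_3^{(p)}(F),
\]
whose kernel is $M_Y\cap \gamma_3^{(p)}(K_Y)$. Its image is $R_Y\gamma_3^{(p)}(F)/\gamma_3^{(p)}(F)$, which is the kernel of $U_d\to Q$ and has dimension $D_d-s$. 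Hence $L(Y)$ is the annihilator of $M_Y\cap\gamma_3^{(p)}(K_Y)$, and
\[
\dim L(Y)=D_d-s.
\]
It then follows at once that $h(Y)=\dim M_Y-(D_d-s)\ge0$.

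\textbf{Step 2: the criterion.} For $\theta\neq0$, the kernel $A$ of $E_\theta$ is the image of $M_Y$, while $\gamma_3^{(p)}(E_\theta)$ is the image of $\gamma_3^{(p)}(K_Y)$, because powers and commutators pass to quotients. Hence $A\le\gamma_3^{(p)}(E_\theta)$ exactly when
\[
M_Y\le \gamma_3^{(p)}(K_Y)\ker\theta,
\]
which is equivalent to $\theta$ being nonzero on $M_Y\cap\gamma_3^{(p)}(K_Y)$. The equivalence uses that $\ker\theta$ has codimension one in $M_Y$, so $M_Y$ is spanned by $\ker\theta$ and any element of $M_Y$ on which $\theta$ is nonzero. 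So $[E_\theta]\notin L(Y)$ iff $A\le\gamma_3^{(p)}(E)$. For the zero class the extension splits and $A$ is a direct factor. Since $\gamma_3^{(p)}(Y\times\F_p)=\gamma_3^{(p)}(Y)\times1$, $A$ is not contained in it, which is consistent with $0\in L(Y)$. Equivalent extensions give the same condition, so the criterion is well defined on classes.

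\textbf{Step 3: counting.} The allowable classes are $H^2\setminus L$, which number $p^{D_d-s}(p^{h}-1)$. This set is closed under nonzero scalars, so dividing by $p-1$ gives the stated number of lines.

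\textbf{Main obstacle.} The delicate step is Step 1: checking rigorously that inflation from $V$ dualizes to the natural map $M_Y\to W_d$. I would handle it by choosing the presentation of $V$ with the same free group $F$ and comparing the pushout constructions directly.
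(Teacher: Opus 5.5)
Your proof is correct, and it takes a different route from the paper. You work with the $p$-covering group $K_Y$ of the descendant $Y$ itself and extend the identification $H^2(Y,\F_p)\cong M_Y^*$ of Lemma~\ref{lem:root-cover} from $Q$ to $Y$. You then recognize inflation from $V$ as the transpose of the natural map $M_Y\to M_V=W_d$. One computation, with kernel $M_Y\cap\gamma_3^{(p)}(K_Y)$ and image equal to the kernel of $U_d\to Q$, gives $\dim L(Y)=D_d-s$. Combined with $E_\theta\cong K_Y/\ker\theta$, it also gives the criterion.

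The paper never builds a cover of $Y$. It defines $L_0(Q)\le H^2(Q)$ as the functionals killing the nucleus $N$ of $Q$, of dimension $D_d-s$ by Lemma~\ref{lem:root-cover}. A section argument shows that inflation $H^2(Q)\to H^2(Y)$ is injective on $L_0(Q)$. It then shows that an extension $X$ of $Y$ satisfies $A\cap\gamma_3^{(p)}(X)=1$ exactly when $X\cong Y\times_Q(X/\gamma_3^{(p)}(X))$ is pulled back from $L_0(Q)$. Finally it identifies $L_0(Q)$ with the inflation image from $V$ and uses transitivity of inflation.

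Your version is more uniform and linear-algebraic. It also yields $H^2(Y)/L(Y)\cong(M_Y\cap\gamma_3^{(p)}(K_Y))^*$, so $h(Y)$ is the dimension of the nucleus of $Y$, and $Y=Q$ recovers $h(Q)=\nu(Q)$. Its cost is the naturality of the pushout identification under inflation, which you rightly flag. That naturality follows from the map $K_Y\to K_V$ induced by the identity of $F$ together with the universal property of pullbacks. The paper's version avoids this bookkeeping and instead gives the fibre-product description of the extensions in $L(Y)$.

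One small point in your Step~2: the codimension-one remark proves only one direction. For the converse, if $M_Y\le\gamma_3^{(p)}(K_Y)\ker\theta$, Dedekind's law gives $M_Y=(M_Y\cap\gamma_3^{(p)}(K_Y))\ker\theta$. So $\theta\ne0$ forces $\theta$ to be nonzero on $M_Y\cap\gamma_3^{(p)}(K_Y)$.
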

\begin{proof}
For the root $Q$, a pushout $E_\theta$ has trivial $\gamma_3^{(p)}$ exactly when $\theta$ kills $N$.
The classes of such pushouts form a subspace $L_0(Q)$ of dimension $D_d-s$.
Inflation to $Y$ is injective on $L_0(Q)$: a splitting of a pullback gives a lifting map $Y\to E$, which kills $\gamma_3^{(p)}(Y)$ because $\gamma_3^{(p)}(E)=1$, and descends to a section of the original extension.
If $X=Y\times_Q E$ is the pullback of an extension $E$ representing a class in $L_0(Q)$, then the projection $X\to E$ kills $\gamma_3^{(p)}(X)$ and is injective on the kernel $A$ of $X\to Y$.
Hence $A\cap\gamma_3^{(p)}(X)=1$.
Conversely, let $1\to A\to X\to Y\to1$ be a central extension with $|A|=p$ and $A\cap\gamma_3^{(p)}(X)=1$.
Projection identifies $\gamma_3^{(p)}(X)$ with $\gamma_3^{(p)}(Y)$, and $X/\gamma_3^{(p)}(X)$ is a $\gamma_3^{(p)}$-trivial central extension of $Q$.
The quotient maps give
\[
X\cong Y\times_Q(X/\gamma_3^{(p)}(X)).
\]
The combined map $X\to Y\times_Q(X/\gamma_3^{(p)}(X))$ is injective by the trivial intersection and onto by equality of orders.
Thus the inflation image of $L_0(Q)$ in $H^2(Y)$ consists exactly of the extension classes for which $A\cap\gamma_3^{(p)}(X)=1$.

Finally $L_0(Q)$ is the inflation image from $V=Q/\Phi(Q)$.
In a $\gamma_3^{(p)}$-trivial central extension with kernel $A$, the group $\Phi(E)A$ is elementary abelian.
Choose a complement to $A$ in $\Phi(E)A$ and quotient $E$ by that complement; the resulting quotient is a scalar extension of $V$, whose pullback is the original extension of $Q$.
Conversely, all scalar central extensions of $V$ have trivial $\gamma_3^{(p)}$.
The quotient map $Y\to Q$ induces an isomorphism $Y/\Phi(Y)\cong V$, so transitivity of inflation identifies the image of $L_0(Q)$ in $H^2(Y)$ with $L(Y)$.
Since $|A|=p$, the condition $A\cap\gamma_3^{(p)}(X)=1$ is equivalent to $A\not\leq\gamma_3^{(p)}(X)$, proving the asserted equivalence.
The injectivity proved above gives $\dim L(Y)=D_d-s$, and hence $h(Y)\geq0$.
The complement of $L(Y)$ has $p^{r(Y)}-p^{D_d-s}$ vectors, and each scalar line has $p-1$ nonzero vectors.
This gives the formula.
\end{proof}

Every descendant can be reached through allowable scalar extensions.
Every nontrivial normal subgroup of a finite $p$-group meets the center nontrivially, by conjugation-orbit counting modulo $p$.
Choose a central order-$p$ subgroup inside the normal subgroup, pass to the quotient, and repeat with the image of the normal subgroup.
Applied to $\gamma_3^{(p)}(G)$ and read in reverse, this filtration gives extensions whose kernels lie in the corresponding $\gamma_3^{(p)}$ at every step.

\subsection{Bounds on the nucleus}
\label{subsec:nucleus}

For a root let $V=Q/\Phi(Q)$, $W=\Phi(Q)$, with complete maps $\beta:\Lambda^2V\to W$ and $q:V\to W$.
The power map is linear for odd $p$ and satisfies $q(x+y)=q(x)+q(y)+\beta(x,y)$ for $p=2$; $\operatorname{span}(\im\beta,q(V))=W$.
Define
\[
J_\beta(x\wedge y\wedge z)
=x\otimes\beta(y,z)+y\otimes\beta(z,x)+z\otimes\beta(x,y),
\quad J_\beta:\Lambda^3V\to V\otimes W.
\]
Let $\mathcal J_\beta=\ker J_\beta^*$, viewed as a space of maps $W\to V^*$, and let $R_\beta$ be the maximum matrix rank of a map in $\mathcal J_\beta$.

\begin{proposition}\label{prop:nucleus-bound}
If $s=\dim W\geq1$, then
\[
\nu(Q)\leq s(s+3)/2,\quad \nu(Q)\leq(s+1)R_\beta.
\]
Moreover $R_\beta=0$ implies $\nu(Q)=0$, and $R_\beta=1$ implies $\nu(Q)\leq2$.
\end{proposition}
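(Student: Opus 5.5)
The plan is to describe $N=\gamma_3^{(p)}(K)$ explicitly as the image of a small multilinear object built from $V$ and $W$, and then bound its dimension by linear algebra on the constraints that object must satisfy.

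\textbf{Step 1: generators of $N$.} By Lemma~\ref{lem:root-cover}, $\gamma_4^{(p)}(K)=1$, and $M$ is central elementary abelian with $K/M=Q$. Also $\Phi(K)/M\cong\Phi(Q)=W$. For $w\in\Phi(K)$, $z\in M$ and $y\in K$,
\[
[wz,y]=[w,y],\qquad (wz)^p=w^p.
\]
Moreover $[\Phi(K),\Phi(K)]\leq\gamma_4^{(p)}(K)=1$. Hence the commutator and $p$th power induce two maps into $N$:
\[
c:W\otimes V\to N,\qquad P:W\to N,
\]
with $c$ bilinear and $P$ linear, using $(w_1w_2)^p=w_1^pw_2^p$ on the abelian group $\Phi(K)$. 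Since $N=\Phi(K)^p[\Phi(K),K]$, we get $N=\im c+\im P$. This already gives $\nu(Q)\leq sd+s$. The task is to cut this down using relations among the generators.

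\textbf{Step 2: relations.} Two families of relations will be used.

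(i) The Hall--Witt identity in the class-three quotient $\gamma_2(K)\gamma_3$-level gives
\[
c\bigl(\beta(y,z)\otimes x\bigr)+c\bigl(\beta(z,x)\otimes y\bigr)+c\bigl(\beta(x,y)\otimes z\bigr)=0 .
\]
After the identification $V\otimes W\cong W\otimes V$, this says $c\circ J_\beta=0$. Dually, every functional $\varphi\in N^*$ yields $c^*\varphi\in\ker J_\beta^*=\mathcal J_\beta$, a map $W\to V^*$ of rank at most $R_\beta$.

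(ii) The identity $[x^p,y]=[x,y]^p$ holds modulo $\gamma_4^{(p)}$, with the usual $p=2$ correction involving $[x,y,x]$. It gives
\[
c\bigl(q(x)\otimes y\bigr)=P\bigl(\beta(x,y)\bigr)+(\text{terms in }\im c).
\]
Since $W=\operatorname{span}(\im\beta,q(V))$, this ties $P$ on $\im\beta$ to $c$.

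\textbf{Step 3: the degenerate cases.} If $R_\beta=0$, then $c=0$ by (i). Relation (ii) then gives $P(\im\beta)=0$. Further, $P(q(x))=x^{p^2}$, which vanishes in $K$ up to the relation $x^{p^2}$ lifting $0$. I expect this last point to follow from $[q(x),x]=0$ together with (ii), so that $N=0$.

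If $R_\beta=1$, every map in $\mathcal J_\beta$ has rank at most one. Hence $c$ factors through a map determined by a one-dimensional image in $V^*$ and a one-dimensional coimage. This leaves at most one commutator direction and one power direction, so $\nu(Q)\leq 2$.

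\textbf{Step 4: the general bounds.} For $\nu(Q)\leq(s+1)R_\beta$, I would pick $\varphi_0\in\mathcal J_\beta$ of rank $R_\beta$ and use the standard maximal-rank argument: the image spaces of all of $\mathcal J_\beta$ lie in a common space of dimension at most $R_\beta$ after suitable projection. Then $\im c$ is spanned by $c(W\otimes U)$ for a fixed $U\leq V$ with $\dim U\leq R_\beta$, giving at most $sR_\beta$ dimensions. The powers add at most $R_\beta$ more, via (ii).

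For $\nu(Q)\leq s(s+3)/2=\binom{s+1}{2}+s$, I would show that $c$ factors through $\operatorname{Sym}^2$- or $\Lambda^2$-type data on $W$. The idea is to use (ii) to express $c(w\otimes y)$ through values $c(q(x)\otimes y)$ and $P(\beta(\cdot,\cdot))$. Then (i) symmetrizes these into at most $\binom{s+1}{2}$ independent values, with $P$ contributing at most $s$.

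\textbf{Main obstacle.} The hardest step is the maximal-rank localization in Step 4. A subspace of matrices of bounded maximal rank need not have a small common column space; it can instead be compressed, a Flanders-type phenomenon. I would first try to exploit the extra Jacobi symmetry of $\mathcal J_\beta$ to rule out the transposed compression case, and handle the $p=2$ quadratic corrections separately.
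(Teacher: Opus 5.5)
Your Steps~1--2 are the right setup and match the paper, which likewise injects $N^*$ into the space of pairs $(a,\delta)$ satisfying the Jacobi constraint and the power constraint. The gap is that none of the four bounds is then proved.

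\textbf{The bound $\nu(Q)\le(s+1)R_\beta$.} You rely on a common low-dimensional image for $\mathcal J_\beta$. As you note yourself, this fails for bounded-rank spaces in general, and nothing in the proposal shows that Jacobi rescues it. The paper needs no structure theorem, only the dimension bound $\dim\mathcal S\le R\max(a,b)$ for a space of $a\times b$ matrices of rank at most $R$. Applied to $\mathcal J_\beta$ directly, however, this gives only $dR_\beta$, so two further ideas are essential. Put $B=\im\beta$, $b=\dim B$, $k=\dim W/B$.
\begin{enumerate}
\item Restrict functionals to $V\otimes B$, and then further to $U\times B$, where $U$ is minimal with $\beta(\Lambda^2U)=B$. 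One has $\dim U\le b+1$. The restriction is injective: if $a$ vanishes on $U\otimes B$, Jacobi on $(v,u_1,u_2)$ gives $a(v,\beta(u_1,u_2))=0$. The dimension bound on these $(\dim U)\times b$ matrices then gives at most $(b+1)R_\beta$.
\item The kernel of restriction to $V\otimes B$ consists of functionals factoring through $V\otimes W/B$. All of $V^*\otimes B^\perp$, of dimension $dk$, lies in $\mathcal J_\beta$, so the power relation must cut this part of $c$ down. It makes $(x,y)\mapsto a_0(x,q_0(y))$ alternating and descend to $W/B$, leaving $\binom{k}{2}+k$ dimensions together with $\delta$. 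You use (ii) only to control $P$, not this part of $c$.
\end{enumerate}
Finally, $R_\beta\ge k$ holds because $V^*\otimes B^\perp\le\mathcal J_\beta$ and $q_0:V\to W/B$ is onto. This absorbs $\binom{k}{2}+k$ into $kR_\beta$.

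\textbf{The bound $\nu(Q)\le s(s+3)/2$.} Your sketch has no working mechanism. For odd $p$ with $q=0$ and $\beta$ onto, (ii) yields only $P=0$, so everything must come from a lower bound on $\rank J_\beta$. The paper shows that exterior multiplication $V^*\otimes E\to\Lambda^3V^*$, with $E$ the $b$-dimensional space of forms $f\circ\beta$, has rank at least $b(d-2)-\binom{b}{2}$, by row-reducing to distinct leading monomials. Hence $\dim\operatorname{coker}J_{\beta:\Lambda^2V\to B}\le b(b+3)/2$, and one adds the $\binom{k}{2}+k$ from above.

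\textbf{The case $R_\beta=0$.} Relators of $Q$ survive in $M$, so $\tilde x^{p^2}$ need not vanish in $K$; for $Q=\mathbf Z/p^2\mathbf Z$ one has $K=\mathbf Z/p^3\mathbf Z$. Also, (ii) at $y=x$ says nothing about $P(q(x))$. The fix is again $R_\beta\ge k$: it forces $W=\im\beta$, so $P$ vanishes on all of $W$.

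\textbf{The case $R_\beta=1$.} A space of rank-one maps has the form $\ell\otimes W'$ or $V'\otimes f$, and it can be $d$-dimensional. For example, $Q=\mathbf Z/p^2\mathbf Z\times\mathbf Z/p\mathbf Z$ has $\mathcal J_\beta=V^*\otimes f$. So ``one-dimensional image and coimage'' is false, and you need a case split.
\begin{itemize}
\item If $\beta$ is not onto, then $k=1$ and $\mathcal J_\beta=V^*\otimes f$. The diagonal constraint $\ell(x)\,(fq)(x)=0$ kills $\ell$, giving $\nu\le1$.
\item If $\beta$ is onto, then $f\beta=\ell\wedge a$ forces $\mathcal J_\beta=\langle\ell,a\rangle\otimes f$, and $\delta$ is determined by $a$, giving $\nu\le2$.
\end{itemize}
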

\begin{proof}
We first describe the relations imposed on the nucleus.
Since $[\gamma_2^{(p)}(K),K]\leq N$ is central of exponent $p$, the subgroup $K'$ centralizes $\gamma_2^{(p)}(K)$.
Moreover, $[z,x^p]=[z,x]^p=1$ for $z\in \gamma_2^{(p)}(K)$ and $x\in K$.
As $\gamma_2^{(p)}(K)=K^pK'$, it follows that $\gamma_2^{(p)}(K)$ is abelian.

For $v\in V$ and $w\in W$, choose lifts $\widetilde v\in K$ and $\widetilde w\in \gamma_2^{(p)}(K)$, and define
\[
T(v\otimes w)=[\widetilde v,\widetilde w],\quad
S(w)=\widetilde w^{\,p}.
\]
Changing $\widetilde v$ multiplies it by an element of $\gamma_2^{(p)}(K)$, which centralizes $\widetilde w$.
Changing $\widetilde w$ multiplies it by an element of the central elementary abelian subgroup $M$.
Thus both maps are well defined.
The centrality of $[\gamma_2^{(p)}(K),K]$ makes $T$ bilinear, and the fact that $\gamma_2^{(p)}(K)$ is abelian makes $S$ linear.
Since $N=\gamma_2^{(p)}(K)^p[\gamma_2^{(p)}(K),K]$, the images of $T$ and $S$ span $N$.
The Hall--Witt identity and collection in $K$ give
\[
TJ_\beta=0,\quad
T(x,q(y))=S\beta(x,y)-c_pT(y,\beta(x,y)),\quad
c_p=\binom p2\in\F_p.
\]
For the second identity, use $[x,y^p]=[x,y]^p[[x,y],y]^{\binom p2}$ and $[[x,y],y]=[y,[x,y]]^{-1}$.
Thus $c_2=1$, whereas $c_p=0$ for odd $p$.
Let $\mathcal A(\beta,q)$ be the space of pairs $(a,\delta)\in(V\otimes W)^*\oplus W^*$ satisfying
\[
aJ_\beta=0,\quad
a(x,q(y))=\delta(\beta(x,y))-c_pa(y,\beta(x,y)).
\]
There is an injection $N^*\to\mathcal A(\beta,q)$, $\theta\mapsto(\theta T,\theta S)$, since the images of $T$ and $S$ together span $N$.
In particular $a(x,q(x))=0$.
Thus $\mathcal J_\beta$ imposes only the Jacobi constraint; the power constraint is also required to bound $\nu(Q)$.

Let $B=\im\beta$, $b=\dim B$, $k=s-b$.
Consider the restriction map
\[
\mathcal A(\beta,q)\longrightarrow(V\otimes B)^*,\quad
(a,\delta)\longmapsto a|_{V\otimes B}.
\]
Its image lies in the kernel of $J_{\beta:\Lambda^2V\to B}^*$, and hence has dimension at most $\kappa_B=\dim\operatorname{coker}J_{\beta:\Lambda^2V\to B}$.
In the restriction kernel, $a$ factors through $V\times C$, $C=W/B$, while $q_0:V\to C$ is linear and onto, also for $p=2$.
Compatibility reduces to $a_0(x,q_0(y))=\delta(\beta(x,y))$.
The bilinear form $(x,y)\mapsto a_0(x,q_0(y))$ is alternating and vanishes when either input lies in $\ker q_0$, so it descends to an alternating form on $C$, with dimension at most $\binom k2$ of choices.
Once this alternating form is fixed, $\delta$ has at most $k=\dim B^\perp$ degrees of freedom.
Hence
\[
\nu(Q)\leq\dim\mathcal A(\beta,q)
\leq\kappa_B+k(k+1)/2.
\]

For a $b$-space $E\leq\Lambda^2V^*$, row-reduce a basis to distinct leading exterior monomials.
Each leading pair has $d-2$ degree-three extensions, and two pairs share at most one triple.
Distinct leading terms are independent, also in characteristic two.
Thus exterior multiplication $V^*\otimes E\to\Lambda^3V^*$ has rank at least $b(d-2)-\binom b2$.
Duality yields $\kappa_B\leq b(b+3)/2$; substituting this bound into $\nu(Q)\leq\kappa_B+k(k+1)/2$ gives $\nu(Q)\leq s(s+3)/2$.

For a sharper rank bound, choose a minimum commutator-spanning subspace $U$ for the onto map to $B$, of dimension $\sigma$.
Every hyperplane $\ker\ell<U$ has proper exterior image, so a nonzero target functional gives a nonzero alternating form vanishing on that hyperplane, necessarily of the form $\ell\wedge a$.
A projective target-functional line covers at most the $p+1$ lines in its two-dimensional support.
Hence $p^\sigma-1\leq(p+1)(p^b-1)$, forcing $\sigma\leq b+1$.
Restriction of Jacobi tensors to $U\times B$ is injective: if the restriction of a Jacobi tensor $T$ vanishes, Jacobi on $v,u_1,u_2$ gives $T(v,\beta(u_1,u_2))=0$, and the values $\beta(u_1,u_2)$ with $u_1,u_2\in U$ span $B$.

We use the following matrix-space bound over the original finite field: if all matrices in an $a\times b$ linear space $\mathcal S$ have rank at most $R$, then $\dim\mathcal S\leq R\max(a,b)$.
Transpose if needed to make $a\geq b$.
If $R\geq b$, this follows from $\dim\mathcal S\leq ab$, including $b=0$.
Otherwise take $E=\F_{p^a}$ and a $b$-dimensional $\F_p$-subspace $U\leq E$.
The linearized polynomials
\[
f(Z)=\sum_{i=0}^{b-R-1}c_iZ^{p^i},\quad c_i\in E,
\]
restrict to linear maps $U\to E$.
For nonzero coefficients the polynomial root bound gives kernel dimension at most $b-R-1$, hence rank at least $R+1$.
In particular evaluation is injective on the coefficient space.
The image $\mathcal T$ of this evaluation map has dimension $a(b-R)$ and intersects $\mathcal S$ trivially.
Therefore $\dim\mathcal S+a(b-R)\leq ab$, proving the assertion.
Applying the matrix-space bound to the injective restriction of Jacobi tensors to the commutator-spanning subspace $U\times B$ gives $\kappa_B\leq(b+1)R_\beta$.
Also $V^*\otimes B^\perp\leq\mathcal J_\beta$ and $k\leq d$ by surjectivity of $q_0$, so $R_\beta\geq k$.
Consequently
\[
\nu(Q)\leq(b+1)R_\beta+k(k+1)/2\leq(s+1)R_\beta.
\]

It remains to sharpen the bound when $R_\beta=1$.
If $\beta$ is not onto, then $k=1$; write $B^\perp=\langle f\rangle$.
For $d\geq2$, a rank-one tensor with right factor outside this line, combined with a tensor in $V^*\otimes f$ having independent left factor, would have rank two.
Thus $\mathcal J_\beta=V^*\otimes f$.
For $d=1$, $\beta=0$ and surjectivity of $q_0$ gives $s=1$, with the same conclusion.
The functional $fq$ is nonzero and linear.
The diagonal power constraint on $\ell\otimes f$ reads $\ell(x)(fq)(x)=0$ for every $x$, forcing $\ell=0$.
The remaining power functional has at most one degree of freedom, so $\nu\leq1$.
If $\beta$ is onto, take $0\ne\ell\otimes f\in\mathcal J_\beta$.
Jacobi gives $f\beta=\ell\wedge a\ne0$, and $a\otimes f$ also lies in the kernel.
Any independent right factor would again produce rank two, so $\mathcal J_\beta=\langle\ell,a\rangle\otimes f$ has dimension two.
The compatibility projection has kernel $B^\perp=0$, hence $\nu\leq2$.
\end{proof}

\subsection{Weighted counts of roots}
\label{subsec:weighted-roots}

Let $\mathcal F(d,z)=zD_d-d^2-z^2$.

\begin{proposition}\label{prop:weighted-roots}
For every prime $p$, suppose $d\geq1000$, $s\geq3$, $t\geq1$, and
\begin{equation}\label{eq:root-weight-strip}
m=d+s+t,\quad z=s+t,\quad
\delta=m-3d/2,\quad |\delta|\leq d/1000.
\end{equation}
Then
\[
\sum_{\substack{Q\in\mathcal R_p(d,s)\\\nu(Q)>0}}
p^{t(D_d-s)+t\nu(Q)}
\leq p^{\mathcal F(d,z)-\frac{d^2}{20}}.
\]
\end{proposition}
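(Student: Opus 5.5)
The plan is to bound the weighted sum by stratifying the roots according to the Jacobi rank $R_\beta$ of Proposition~\ref{prop:nucleus-bound}. Two facts drive it. First, $\nu(Q)>0$ forces $R_\beta\geq1$, and a large $R_\beta$ is rare among alternating maps. Second, the weight $p^{t\nu(Q)}$ is at most $p^{t(s+1)R_\beta}$, which grows much more slowly than that rarity decays. Throughout $z=s+t$, and $|\delta|\leq d/1000$ gives $z=d/2+\delta$. Hence $ts\leq z^2/4\approx d^2/16$ and $tz\leq z^2\approx d^2/4$, up to factors $1+O(10^{-3})$.

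\emph{Step 1: pass from orbits to parameters.} For an invariant set $U\subseteq\mathscr P^*_{p,d,s}$, Lemma~\ref{lem:root-orbit-identity} gives $\#(U/\Gamma)\leq p^{sD_d}\,\mathbb E_Y[\mathbf 1_U r]/|\Gamma|$. I would take $U_R=\{(\beta,\pi):R_\beta=R\}$ for $R\geq1$. To control $\mathbb E_Y[\mathbf1_Ur]$, I would use Cauchy--Schwarz, $\mathbb E[\mathbf 1_U r]\leq\Pr(U)^{1/2}(\mathbb E r^2)^{1/2}$. The second moment $\mathbb E r^2$ is $O(1)$ by Lemma~\ref{lem:fixed-space-moments} with $h=2$ together with Lemma~\ref{lem:tensor-rank}; the power map only enlarges the fixed-space codimensions. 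The square root costs a factor $2$ in the exponent of the saving, and the margins below absorb it. Since $|\Gamma|\geq p^{d^2+s^2}/16$, the baseline exponent for the sum restricted to $U_R$ is
\[
sD_d-d^2-s^2+t(D_d-s)=\mathcal F(d,z)+tz .
\]
So for each $R\geq1$ I must show
\[
\tfrac12\log_p\Pr(U_R)+t(s+1)R+tz\leq-\tfrac{d^2}{20}-\log_p(\text{number of strata}) .
\]

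\emph{Step 2: rarity of large Jacobi rank.} If $R_\beta\geq R$, some $\varphi\in(V\otimes W)^*$ of rank exactly $R'\geq R$ satisfies $\varphi J_\beta=0$. There are at most $p^{R'(d+s)}$ such $\varphi$. Write $\varphi=\sum_{i\leq R'}\ell_i\otimes f_i$ with the $\ell_i$ independent and the $f_i$ independent. For fixed $\varphi$ the condition is linear in $\beta$, namely $\sum_i\ell_i\wedge(f_i\beta)=0$ in $\Lambda^3V^*$. I would show this linear map $\beta\mapsto\sum_i\ell_i\wedge(f_i\circ\beta)$ has rank at least
\[
R'\Bigl(\tbinom{d}{2}-O(R'd)\Bigr),
\]
by choosing coordinates with $\ell_i=e_i^*$ and $f_i$ among the first coordinate functionals, and counting leading exterior monomials as in the proof of Proposition~\ref{prop:nucleus-bound}. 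This gives
\[
\Pr(R_\beta\geq R)\leq\sum_{R'\geq R}p^{R'(d+s)-R'd^2/2+O(R'^2d)} .
\]
I expect this rank estimate to be the main obstacle, because the overlap term $O(R'^2d)$ must stay below a constant times $R'd^2$. That holds only for $R'\leq\varepsilon d$ with a small absolute $\varepsilon$.

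\emph{Step 3: combine in two ranges.} For $1\leq R\leq\varepsilon d$, I would use $\nu\leq(s+1)R$; for $R=1$ the sharper $\nu\leq2$ is also available. The exponent in Step~1 is then at most
\[
-\tfrac R4d^2+t(s+1)R+tz+O(Rd+R^2d)\leq -R\bigl(\tfrac14-\tfrac1{16}\bigr)d^2+\tfrac14d^2+o(d^2)
\]
(with the $10^{-3}$ slack). For $R=1$ with $\nu\leq2$ the weight term is only $2t$, giving $-d^2/4+d^2/4$ at worst. This is too tight after the Cauchy--Schwarz halving, so for $R=1$ I would avoid Cauchy--Schwarz. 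Instead, I would bound $\mathbb E[\mathbf1_Ur]\leq\Pr(U)+\mathbb E[(r-1)]$ and treat the excess through the tensor-stabilizer bound $\Delta\leq p^{-cd^2}$ of Corollary~\ref{cor:parameter-errors}. I would also split $r$ exactly as in Lemma~\ref{lem:fixed-space-moments}, restricted to $U$: pairs $g$ in $B_K$ contribute $\Pr(U)$ up to $p^{O(d)}$, and pairs outside $B_K$ contribute $p^{-Td^2}$. This recovers the full saving $\Pr(U)\approx p^{-d^2/2}$, which beats $d^2/4+d^2/20$.

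In general the Cauchy--Schwarz loss is avoided the same way, giving a saving of $R(d^2/2-d^2/16)-d^2/4\geq 3d^2/16>d^2/20$. For $R>\varepsilon d$, I would instead use $\nu\leq s(s+3)/2$ together with the probability bound at $R=\lceil\varepsilon d\rceil$. The saving $\varepsilon d^3/2$ dominates $ts^2/2=O(d^3)$ only if $\varepsilon$ exceeds a fixed constant times $ts^2/d^3$. So $\varepsilon$ has to be chosen with care, and if needed Step~2 has to be run with explicit constants up to $R'\approx d/8$, using $ts^2\leq 4z^3/27$. Summing over polynomially many strata and over $s$ is absorbed by $d\geq1000$.
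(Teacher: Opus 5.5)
Your Step~2 agrees with the paper's codimension count $\Delta_R=\binom d3-\binom{d-R}3$. The passage from orbits to parameters in Steps~1 and~3, however, has a genuine gap when $s$ is small compared with $d$.

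Every stabilizer input you use requires $ad\leq s\leq bd$ for a fixed $a>0$. This applies to $\mathbb E r^2=O(1)$, to the bound $\eta\geq Td^2$ off $B_K$ from Lemma~\ref{lem:tensor-rank}, and to $\Delta_{p,d,s}\leq p^{-cd^2}$ from Corollary~\ref{cor:parameter-errors}. The proposition covers every $s\geq3$ with $s+t=z\approx d/2$, and Proposition~\ref{prop:nonroot-count} uses it for bounded $s$. That is exactly where $tz\approx d^2/4$ is largest.

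For $R=1$ you then need roughly $\mathbb E[\mathbf 1_{U_1}r]\leq p^{-tz-2t-d^2/20}\approx p^{-0.3d^2}$. Your split, which keeps $U$ on $B_K$ and drops it off $B_K$, cannot deliver this. Take $s=3$ and $g=(P,I)$ with $(P-I)^2=0$ and $\rank(P-I)=d/2$:
\begin{itemize}
\item there are about $p^{d^2/2}$ such $P$;
\item each has $\rank(\Lambda^2P-I)\approx d^2/4$, so its fixed space on the parameters has codimension about $3d^2/4$;
\item these $g$ lie outside $B_K$ and together contribute about $p^{-d^2/4}$ to $\sum_{g\notin B_K}p^{-\eta(g)}$.
\end{itemize}
Against the factor $p^{tz}\approx p^{d^2/4}$ this leaves no quadratic saving. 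Closing the gap would need a joint bound on $\Pr(U_1\cap\operatorname{Fix}(g))$, which the proposal does not provide.

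There are two smaller issues. First, even for $s\geq ad$, ``avoiding Cauchy--Schwarz the same way'' fails once $R$ exceeds a constant. The off-$B_K$ error $p^{(M-T)d^2}$ with fixed $T$ cannot absorb the weight $p^{t(s+1)R}$, so you must keep Cauchy--Schwarz there. Second, the nonexplicit threshold in Lemma~\ref{lem:tensor-rank} cannot give the stated range $d\geq1000$.

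The paper never divides by $|\Gamma|$ through stabilizer averages. It fixes a rank-$R$ witness $T$ and bounds $n_R$ by the number of $\operatorname{Stab}_\Gamma(T)$-orbits on the constrained set $Y_T$, where $|Y_T|=p^{sD_d-\Delta_R}$.
\begin{itemize}
\item For $R\geq5$, and in a few residual cases, even the crude bound $n_R\leq|Y_T|$ suffices. This is because $\Delta_R-t(s+1)R\geq27Rd^2/100$ outweighs the lost factor $p^{d^2+z^2-st}$.
\item For the remaining small $R$, the needed saving comes from $\GL(A)\leq\operatorname{Stab}_\Gamma(T)$, where $A$ is the left radical of $T$ and $\dim A=d-R$. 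The paper projects $Y_T$ equivariantly onto one or two coordinate alternating forms on $A$, with fibers of size $p^{sD_d-\Delta_R-k\binom{d-R}2}$. It then counts target orbits: at most $d+1$ for a single form, by rank, and at most $p^{2(d-R)}$ for ordered pairs, by the canonical block decomposition of pairs of alternating forms.
\end{itemize}
This recovers a saving of about $(d-R)^2$ uniformly in $s\geq3$, with explicit constants, and it needs no stabilizer moment estimate.
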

\begin{proof}
For a rank-$R$ functional $T:V\otimes W\to\F_p$, choose coordinates so $T=\sum_{i=1}^R\ell_i\otimes f_i$.
If $\beta=\sum_i\omega_iw_i$, then $TJ_\beta=\sum_{i=1}^R\ell_i\wedge\omega_i$.
The image of the linear map $\beta\mapsto TJ_\beta$ is spanned by exactly the exterior degree-three monomials using at least one of the first $R$ coordinates.
The constraint therefore has codimension
\[
\Delta_R=\binom d3-\binom{d-R}3.
\]
Fix the rank-$R$ witness $T$ and let $S=\operatorname{Stab}_\Gamma(T)$ for $\Gamma=\GL(V)\times\GL(W)$.
Let $Y_T$ be all complete formal pairs $(\beta,q)$ satisfying $TJ_\beta=0$, without generation or surjectivity conditions.
The number of pairs in $Y_T$ is $p^{sD_d-\Delta_R}$: there are $s\binom d2-\Delta_R$ alternating coordinates and $ds$ power coordinates, the latter being quadratic refinements at $p=2$.
If $n_R$ counts roots of maximum Jacobi rank exactly $R$, mark each with a rank-$R$ witness.
The group $\Gamma$ is transitive on rank-$R$ witnesses, so every marked witness may be moved to $T$; distinct root classes cannot then become one $S$-orbit.
The resulting bounds, proved below, are
\begin{equation}\label{eq:root-rank-bounds}
\begin{aligned}
n_R&\leq p^{sD_d-\Delta_R},\\
n_R&\leq(d+1)p^{sD_d-\Delta_R-\binom{d-R}{2}}
&& (s-R\geq1),\\
n_R&\leq p^{sD_d-\Delta_R-(d-R)^2+3(d-R)}
&& (s-R\geq2).
\end{aligned}
\end{equation}
The first follows from $n_R\leq\#(Y_T/S)\leq|Y_T|$.

Let the left and right radicals of $T$ be $A\leq V$, $W_0\leq W$, with dimensions $n=d-R$, $s-R$.
Extend $H=\GL(A)$ trivially across a fixed complement and on $W$, so $H\leq S$.
Jacobi on $v,a,b$ gives $\beta(\Lambda^2A)\subseteq W_0$.
Every alternating map $\Lambda^2A\to W_0$ extends by zero in the complement directions and still satisfies $TJ_\beta=0$.
For $k=1$ or $2$ with $k\leq s-R$, restrict to $\Lambda^2A$ and project onto fixed $k$ output coordinates in $W_0$.
This is an $H$-equivariant surjection
\[
Y_T\longrightarrow\Hom(\Lambda^2A,\F_p^k)
\]
with fibers of size $p^{sD_d-\Delta_R-k\binom n2}$.
Each source orbit meets the fiber over a chosen target-orbit representative, whence
\[
n_R\leq p^{sD_d-\Delta_R-k\binom n2}
\#\bigl(\Hom(\Lambda^2A,\F_p^k)/H\bigr).
\]
This uses no freeness assumption.
For $k=1$, successively splitting off nondegenerate alternating planes classifies forms by even rank, giving at most $d+1$ orbits and the second bound.

For $k=2$ we use the canonical decomposition of ordered alternating pairs over any field $\mathbb k$.
Write
\[
\mathsf H(P)=\begin{pmatrix}0&P\\-P^{\mathsf T}&0\end{pmatrix}
\quad(P\in\operatorname{Mat}_{a\times b}(\mathbb k)).
\]
Every pair is, under simultaneous congruence $(A_0,A_1)\mapsto(S^{\mathsf T}A_0S,S^{\mathsf T}A_1S)$, a direct sum of the following blocks:
\begin{enumerate}[label=\textup{(\alph*)},leftmargin=*]
\item\label{item:alternating-singular} Singular blocks $(\mathsf H(L_0),\mathsf H(L_1))$ of dimension $2\ell+1$, $\ell\geq0$, where $L_0=\binom{I_\ell}{0}$ and $L_1=\binom{0}{I_\ell}$ are $(\ell+1)\times\ell$ matrices.
At $\ell=0$ this is a one-dimensional zero pair.
\item\label{item:alternating-finite} Finite regular blocks $(\mathsf H(C_g),\mathsf H(I_h))$ of dimension $2h$, where $g=f^e$ is a power of a monic irreducible polynomial, $h=\deg g$, and $C_g$ is its companion matrix.
\item\label{item:alternating-infinity} Infinity regular blocks $(\mathsf H(I_h),\mathsf H(J_h(0)))$ of dimension $2h$, $h\geq1$.
\end{enumerate}
In odd characteristic this is \cite[Corollary~2.4, p.~8]{Sergeichuk}.
For characteristic two, \cite[Proposition~4 and pp.~279--280]{Waterhouse} give the regular and infinity decompositions; the alternating case is treated after Corollary~7 on p.~281, and Proposition~8(iii) and Theorem~9 on p.~282 give the singular part, whose additional diagonal constants are all zero.
Transposing the rectangular blocks if necessary gives the displayed list.
Both forms are alternating, including in characteristic two where their diagonals are zero.
Common radicals, degeneracy and dependent pairs are allowed; there is no field-size restriction relative to dimension and no $\GL_2$ mixing of the ordered outputs.
For the upper bound below, we only need the existence of such a decomposition.

Over $\F_p$, let $c_h$ count candidate block labels of dimension $h$.
For odd $h$ it is one.
For $h=2k$, finite labels are among the $p^k$ monic degree-$k$ polynomials, each already determining its irreducible factor and exponent; there is one infinity label.
Thus $c_{2k}\leq p^k+1\leq p^{2k}$ and $c_h\leq p^h$.
For $n\geq1$, each ordered composition of $n$ has at most $p^n$ ordered block lists, and there are $2^{n-1}$ compositions.
Every simultaneous-congruence orbit of ordered alternating pairs has at least one such block list, so the number of these orbits is at most $2^{n-1}p^n\leq p^{2n}$.
For $n=0$ the empty pair is the unique orbit.
Combining the fiber size $p^{sD_d-\Delta_R-2\binom n2}$ with the bound $p^{2n}$ on target orbits gives the third bound in~\eqref{eq:root-rank-bounds}.

Only $1\leq R\leq s$ need be considered.
By Proposition~\ref{prop:nucleus-bound}, $\nu\leq(s+1)R$, $\nu\leq s(s+3)/2$, and $\nu\leq2$ at $R=1$.
The parameter strip gives $s,t\leq51d/100$.
Direct expansion yields
\[
\begin{aligned}
\frac{\Delta_R-t(s+1)R}{R}
={}&d^2/2-dR/2+R^2/6-ds/2+s^2\\
&-3d/2+R/2+s+1/3-\delta(s+1).
\end{aligned}
\]
The quadratic part of this expansion is at least
\[
d^2/2-ds+7s^2/6
=\tfrac76(s-3d/7)^2+2d^2/7.
\]
Using $\delta_+(s+1)\leq d^2/100$ and $d\geq1000$ gives
\[
\Delta_R-t(s+1)R\geq27Rd^2/100.
\]
For $R\geq5$, the logarithm of the contribution from roots of maximum Jacobi rank $R$, minus $\mathcal F(d,z)$, is at most
\begin{equation}\label{eq:root-rank-margin}
d^2+z^2-st-27Rd^2/100
\leq(1.2601-0.27R)d^2\leq-899d^2/10000.
\end{equation}
For $R\geq2$ with $s-R\geq2$, the two-form bound gives a difference at most
\[
z^2-st-27Rd^2/100+2dR-R^2+3d-3R
\leq-2599d^2/10000,
\]
using $2dR+3d\leq Rd^2/100$.
For $R=1$, use $\Delta_1=\binom{d-1}{2}$ and $\nu\leq2$ to obtain $-2399d^2/10000+188d/25$.
The remaining case $R=2,s=3$ uses the one-form bound, $\nu\leq9$, and
\[
\Delta_2+\binom{d-2}{2}=3d^2/2-13d/2+7,
\]
giving $-2399d^2/10000+1209d/100$.
The residual cases $R=3,4$ have $s\leq5$; use $\nu\leq20$ and $\Delta_R\geq3d^2/2-12d$ to obtain $-2399d^2/10000+111d/5$.
It follows that, for $d\geq1000$, the contribution of each rank stratum is at most $p^{\mathcal F(d,z)-d^2/16}$.
For $R\geq5$, we shall also use the stronger bound~\eqref{eq:root-rank-margin}.
There are at most $d$ strata, and $\log_p d\leq d\leq d^2/80$.
Summing retains saving $d^2/20$.
\end{proof}
\section{Counting descendants}\label{sec:descendants}
\subsection{Cohomology of a central extension}
\label{subsec:central-cohomology}

\begin{lemma}\label{lem:central-cohomology}
For a nonsplit central extension $1\to\mathrm C_p\to E\to Y\to1$ with nonzero class $\xi\in H^2(Y)$, let
\[
a_Y(\xi)=\dim\ker(H^1(Y)\xrightarrow{\ \cup\xi\ }H^3(Y)),
\]
and let $\epsilon_Y(\xi)=1$ if the coefficient Bockstein $\operatorname{Bock}_Y(\xi)$ belongs to $H^1(Y)\cup\xi$, and zero otherwise.
Then
\[
r(E)=r(Y)-1+a_Y(\xi)+\epsilon_Y(\xi).
\]
For an allowable extension with fixed canonical root $Q$, the same formula holds with $r$ replaced by $h$.
\end{lemma}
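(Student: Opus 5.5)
The plan is to compute $H^2(E,\F_p)$ from the Lyndon--Hochschild--Serre spectral sequence of the central extension $1\to\mathrm C_p\to E\to Y\to1$. Centrality makes the action on $H^*(\mathrm C_p)$ trivial, so
\[
E_2^{i,j}=H^i(Y)\otimes H^j(\mathrm C_p).
\]
Write $x$ for the generator of $H^1(\mathrm C_p)$ and $y$ for the generator of $H^2(\mathrm C_p)$. Here $y=\operatorname{Bock}(x)$ for odd $p$ and $y=x^2$ for $p=2$. In total degree two the abutment has a three-step filtration with graded pieces $E_\infty^{2,0}$, $E_\infty^{1,1}$ and $E_\infty^{0,2}$. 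So $r(E)$ is the sum of their dimensions, and I will compute each one.

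\emph{The edge terms $E_\infty^{2,0}$ and $E_\infty^{1,1}$.} The transgression $d_2\colon E_2^{0,1}\to E_2^{2,0}$ sends $x$ to the extension class $\xi$, up to a nonzero scalar (standard). Since $\xi\ne0$, the only differential hitting $E^{2,0}$ kills exactly the line $\langle\xi\rangle$. Hence $\dim E_\infty^{2,0}=r(Y)-1$.

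By multiplicativity, $d_2(u\otimes x)=\pm\,u\cup\xi$ for $u\in H^1(Y)$. Nothing of total degree one hits $E^{1,1}$ apart from $E^{-1,2}=0$, and $d_r$ out of $E^{1,1}$ lands in negative rows for $r\ge3$. So $E_\infty^{1,1}=\ker(\cup\xi\colon H^1(Y)\to H^3(Y))$, which has dimension $a_Y(\xi)$.

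\emph{The corner term $E_\infty^{0,2}$.} The class $y$ survives exactly when $d_2y=0$ and $d_3y=0$.
\begin{itemize}
\item For $p=2$: $d_2(x^2)=2x\,d_2x=0$, and by Kudo's transgression theorem $y=x^2=Sq^1x$ transgresses to $Sq^1\xi=\operatorname{Bock}(\xi)$.
\item For odd $p$: $y=\operatorname{Bock}(x)$, and the Bockstein commutes with transgression, so $d_2y=0$ and $d_3y=\operatorname{Bock}(\xi)$.
\end{itemize}
In both cases $d_3y$ is the image of $\operatorname{Bock}_Y(\xi)$ in
\[
E_3^{3,0}=H^3(Y)/(H^1(Y)\cup\xi),
\]
the quotient being by the image of $d_2$ from $E_2^{1,1}$. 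Thus $y$ survives iff $\operatorname{Bock}_Y(\xi)\in H^1(Y)\cup\xi$, and $\dim E_\infty^{0,2}=\epsilon_Y(\xi)$. Adding the three pieces gives $r(E)=r(Y)-1+a_Y(\xi)+\epsilon_Y(\xi)$.

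\emph{The version with $h$.} Take $\xi$ allowable, so by Lemma~\ref{lem:allowable-extensions} the kernel $A$ lies in $\gamma_3^{(p)}(E)$. Then $\gamma_3^{(p)}(E)$ maps onto $\gamma_3^{(p)}(Y)$ with kernel $A$, so $E/\gamma_3^{(p)}(E)\cong Y/\gamma_3^{(p)}(Y)\cong Q$. Hence $E$ has the same canonical root and the same $d$ and $s$. Subtracting $D_d-s$ from both sides gives $h(E)=h(Y)-1+a_Y(\xi)+\epsilon_Y(\xi)$.

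The step I expect to be the main obstacle is the identification of $d_3y$ with $\operatorname{Bock}(\xi)$ modulo $H^1(Y)\cup\xi$. This is a transgression statement, namely Kudo's theorem together with compatibility of the Bockstein and $Sq^1$ with transgression. It needs care at $p=2$, where $y=x^2$ rather than a Bockstein of $x$, and I would cite it rather than reprove it. The nonzero scalars in $d_2x=\lambda\xi$ are harmless, since they do not affect kernels or membership in $H^1(Y)\cup\xi$.
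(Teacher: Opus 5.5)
Your proposal is correct and follows essentially the same argument as the paper. Both use the Lyndon--Hochschild--Serre spectral sequence, with $d_2$ sending the degree-one generator to $\xi$ and $d_3$ sending the degree-two generator to $\operatorname{Bock}_Y(\xi)$ modulo $H^1(Y)\cup\xi$; both read off $E_\infty^{2,0},E_\infty^{1,1},E_\infty^{0,2}$ of dimensions $r(Y)-1$, $a_Y(\xi)$, $\epsilon_Y(\xi)$, and then subtract $D_d-s$ once the canonical root is unchanged. The only difference is that you justify the $d_3$ formula through Kudo's transgression theorem and the commutation of the Bockstein with transgression, whereas the paper cites Leary's computation of these low-degree differentials for a central cyclic kernel.
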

\begin{proof}
Centrality gives the first-quadrant Lyndon--Hochschild--Serre sequence
\[
E_2^{a,b}=H^a(Y)\otimes H^b(\mathrm C_p)\Longrightarrow H^{a+b}(E),
\quad d_r:E_r^{a,b}\to E_r^{a+r,b-r+1}.
\]
Its limiting terms are the successive quotients of a finite filtration of each $H^n(E)$; the inflation image is $E_\infty^{n,0}$.
For compatible generators $\zeta\in H^1(\mathrm C_p)$ and $\tau\in H^2(\mathrm C_p)$, the required differentials are
\[
d_2(\zeta)=\xi,\quad d_2(\tau)=0,\quad
d_3(\tau)=\operatorname{Bock}_Y(\xi)
\pmod{H^1(Y)\cup\xi}.
\]
The Bockstein comes from $0\to\F_p\to\mathbf Z/p^2\mathbf Z\to\F_p\to0$; an overall sign convention does not affect the argument.
At $p=2$, $\tau=\zeta^2$ and the formulas still hold.
These low-degree differentials for a central cyclic kernel are given in \cite[Introduction]{Leary1993}.

First-quadrant bidegrees and multiplicativity, with $d_2(\chi\zeta)=\pm\chi\cup\xi$, give the limiting terms in total degree two:
\[
\begin{aligned}
E_\infty^{2,0}&=H^2(Y)/\langle\xi\rangle,\\
E_\infty^{1,1}&\cong\ker(H^1(Y)\xrightarrow{\ \cup\xi\ }H^3(Y)),\\
E_\infty^{0,2}&=\ker\bigl(\F_p\tau\xrightarrow{d_3}
H^3(Y)/(H^1(Y)\cup\xi)\bigr).
\end{aligned}
\]
The terms in bidegrees $(2,0)$, $(1,1)$ and $(0,2)$ stabilize on pages three, three and four, respectively, with dimensions $r(Y)-1,a_Y(\xi),\epsilon_Y(\xi)$.
Adding the filtration dimensions proves the formula.
Since $\xi\ne0$, the degree-one calculation also gives $H^1(E)\cong H^1(Y)$, so the generator number is unchanged.
For allowable extensions the root and therefore $d,s$ remain fixed; subtracting $D_d-s$ proves the assertion about $h$.

For later use, the only differentials entering $(3,0)$ are $d_2^{1,1}$ and $d_3^{0,2}$, of ranks $d-a_Y(\xi)$ and $1-\epsilon_Y(\xi)$.
There are no outgoing differentials from bidegree $(3,0)$.
Consequently every such nonsplit central extension satisfies
\[
\dim\ker(H^3(Y)\xrightarrow{\mathrm{inf}}H^3(E))
=d-a_Y(\xi)+1-\epsilon_Y(\xi)=d+r(Y)-r(E).
\]
Allowability is not needed for this last identity.
\end{proof}

\subsection{Bounds on the cohomological dimension and inflation}
\label{subsec:capacity-inflation}

\begin{lemma}\label{lem:capacity-inflation}
The following bounds hold.
\begin{enumerate}[label=\textup{(\alph*)},leftmargin=*]
\item\label{item:state-capacity} Let $Y$ be a descendant of a root $Q$, and let $d=d(Q)$, $s=\dim\Phi(Q)\geq1$ and $j=\log_p|\gamma_3^{(p)}(Y)|$.
Then
\[
0\leq h(Y)\leq s(s+3)/2+2sj.
\]
\item\label{item:inflation-loss} For any finite $p$-groups $X,Y$ with $d(X)=d(Y)=d$, let $X\to Y$ be an epimorphism whose kernel lies in $\Phi(X)$ and has order $p^j$.
Then
\[
\dim\ker(H^3(Y)\xrightarrow{\mathrm{inf}}H^3(X))
\leq jd+r(Y)-r(X).
\]
\end{enumerate}
\end{lemma}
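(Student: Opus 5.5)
Both parts will be proved by induction along a chain of central extensions of order $p$, using Lemma~\ref{lem:central-cohomology} at each step. The lower bound $h(Y)\geq0$ in~\ref{item:state-capacity} is already part of Lemma~\ref{lem:allowable-extensions}, so only the upper bound needs work.

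For~\ref{item:state-capacity}, I will induct on $j$. At $j=0$ we have $Y=Q$, and Lemma~\ref{lem:root-cover} gives $h(Q)=r(Q)-(D_d-s)=\nu(Q)$. Proposition~\ref{prop:nucleus-bound} then gives $h(Q)\leq s(s+3)/2$.

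For $j\geq1$, the remark after Lemma~\ref{lem:allowable-extensions} applies to $\gamma_3^{(p)}(Y)$. It produces a central subgroup $A\leq\gamma_3^{(p)}(Y)$ of order $p$ with $Y/A$ a descendant of $Q$ of size $j-1$. The extension $Y\to Y/A$ is nonsplit, since $A\leq\Phi(Y)$, and allowable by Lemma~\ref{lem:allowable-extensions}.

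Lemma~\ref{lem:central-cohomology} gives $h(Y)=h(Y/A)-1+a+\epsilon$ with $a\leq d$ and $\epsilon\leq1$. This yields only $h(Y)\leq h(Y/A)+d$, an increment of $d$ per step rather than the required $2s$. That is the main obstacle.

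To get the $2s$ increment I would bound $a_Y(\xi)$ more carefully. I would split $H^1(Y)=V^*$ and show that the classes $\chi$ with $\chi\cup\xi=0$ contribute at most about $2s$ beyond those killed. Since $\xi$ is allowable, it restricts nontrivially to the nucleus directions. Dually, the new $H^2$ classes of $E$ coming from $E_\infty^{1,1}$ correspond to central extensions whose new kernel relations are commutators $[v,a]$ with $a$ in the new kernel, together with $p$th powers.

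A cleaner route is to bound $h$ directly, avoiding the spectral sequence, by an analogue of the proof of Proposition~\ref{prop:nucleus-bound}. Realise $h(Y)$ as the dimension of $\gamma_3^{(p)}$ of the $p$-covering group of $Y$ modulo the image of $\gamma_3^{(p)}(Y)$. That $\gamma_3^{(p)}$ is spanned by:
\begin{itemize}
\item the $T$ and $S$ images of the root layer, contributing at most $s(s+3)/2$;
\item for each of the $j$ generators $c$ of $\gamma_3^{(p)}(Y)$, the elements $[v,c]$ and $c^p$.
\end{itemize}
Relations of Jacobi/Hall--Witt type would express $[v,c]$ through commutators with the $s$ Frattini generators, two for each of the $s$ directions. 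This would give the contribution $2sj$. I would try this route first.

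For~\ref{item:inflation-loss}, factor $X\to Y$ as a chain $X=X_j\to X_{j-1}\to\cdots\to X_0=Y$ of central extensions of order $p$. The kernel is normal in $X$, so it has a central series with factors of order $p$ refining it. Each step has kernel inside the Frattini subgroup, so it is nonsplit and preserves $d$.

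The final identity of Lemma~\ref{lem:central-cohomology} gives
\[
\dim\ker(H^3(X_{i-1})\to H^3(X_i))=d+r(X_{i-1})-r(X_i).
\]
For the composite inflation, the kernel dimension is at most the sum of the kernel dimensions of the steps. Indeed, $\ker(g\circ f)$ has dimension at most $\dim\ker f+\dim\ker g$. Summing over the $j$ steps telescopes to $jd+r(Y)-r(X)$, which is the bound in~\ref{item:inflation-loss}.
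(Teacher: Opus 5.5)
Your part (b) is correct and is the paper's argument. You refine the kernel into $j$ central order-$p$ steps inside the successive Frattini subgroups, apply the one-step degree-three identity of Lemma~\ref{lem:central-cohomology}, and telescope via subadditivity of kernels under composition. Your base case $h(Q)=\nu(Q)\le s(s+3)/2$ in (a) is also fine.

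The gap is the inductive step of (a), which is the whole content of the bound. As you note, Lemma~\ref{lem:central-cohomology} applied to $Y$ allows an increment of up to $d$ per step, and neither of your repairs is an argument. The paper never bounds a single increment $h(E)-h(Y)=a_Y(\xi)+\epsilon_Y(\xi)-1$ by $2s$, and there is no evident reason such a per-vertex bound should hold; only cumulative growth is controlled. Your covering-group sketch rests on the unproved claim that Hall--Witt relations reduce each $[v,c]$ to ``two commutators per Frattini direction.'' Moreover, $c\in\gamma_3^{(p)}(Y)$ need not be central in $Y$ once $j\ge2$, so these commutators need not lie in the $p$-multiplicator. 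It is also not explained why the root-layer terms in the cover of $Y$ are still bounded by $\nu(Q)$.

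The missing idea is to move the whole tower into a subgroup with at most $2s$ generators, where the crude per-step bound already suffices. Your count ``two for each of the $s$ directions'' is the right source of $2s$, but it should be used to cut down the generator number:
\begin{enumerate}[label=\textup{(\roman*)},leftmargin=*]
\item Choose a basis of $W=\Phi(Q)$ consisting of commutator and $p$th-power values. Their inputs span a subspace of $Q/\Phi(Q)$ of dimension $k\le\min(d,2s)$. Lifts of a basis of that subspace generate a root $H_0\le Q$ with $\Phi(H_0)=W$ and $d(H_0)=k$.
\item Let $H$ be the full preimage of $H_0$ in $Y$, so $\Phi(H)\gamma_3^{(p)}(Y)=\Phi(Y)$. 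The paper proves a subgroup criterion: $\Phi(H)\gamma_3^{(p)}(G)=\Phi(G)$ forces $\Phi(H)=\Phi(G)$ and $\gamma_3^{(p)}(H)=\gamma_3^{(p)}(G)$. This is shown by descending through maximal subgroups and using termination of the $\gamma^{(p)}$-series.
\item The criterion shows that $H$ is a depth-$j$ descendant of $H_0$ with $d(H)=k$. Applied inside the extension groups, it also shows that restriction preserves and reflects allowability. Hence $H^2(Y)/L(Y)\hookrightarrow H^2(H)/L(H)$, i.e.\ $h(Y)\le r(H)-(D_k-s)$.
\item Only now does your induction work. Each of the $j$ allowable steps from $H_0$ to $H$ raises $r$ by $-1+a+\epsilon\le k$. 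Proposition~\ref{prop:nucleus-bound}, applied to the genuine root $H_0$ (whose commutator map need not be onto), then gives $h(Y)\le\nu(H_0)+kj\le s(s+3)/2+2sj$.
\end{enumerate}
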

\begin{proof}
We first prove a subgroup criterion.
If $H\leq G$ and $\Phi(H)\gamma_3^{(p)}(G)=\Phi(G)$, then
\[
\Phi(H)=\Phi(G),\quad H\trianglelefteq G,\quad \gamma_3^{(p)}(H)=\gamma_3^{(p)}(G).
\]
The series $\gamma_i^{(p)}(G)$ terminates for every finite $p$-group: by induction on order, quotient by a central order-$p$ subgroup $A$; eventually $\gamma_n^{(p)}(G)\leq A$, and then $\gamma_{n+1}^{(p)}(G)=1$.

First suppose $M\trianglelefteq G$ and $\Phi(M)=\Phi(G)$.
The group $\gamma_3^{(p)}(M)$ is characteristic in $M$, hence normal in $G$.
Since $[M,G]\leq\Phi(G)=\Phi(M)$, conjugation acts trivially on $M/\Phi(M)$ and acts on $M/\gamma_3^{(p)}(M)$ by multiplying elements by central exponent-$p$ elements.
The conjugation action of $G$ therefore fixes all powers and commutators generating $\Phi(M)/\gamma_3^{(p)}(M)$.
Thus $[\Phi(M),G]\leq \gamma_3^{(p)}(M)$ and $\gamma_3^{(p)}(G)\leq \gamma_3^{(p)}(M)\leq \gamma_3^{(p)}(G)$.
For general $H$, induct on $[G:H]$.
If $H<G$, choose a maximal subgroup $M$ containing it.
Then $M\trianglelefteq G$ and $\Phi(M)\trianglelefteq G$.
The hypothesis gives $\Phi(M)\gamma_3^{(p)}(G)=\Phi(G)$.
In $G/\Phi(M)$ the second and third $\gamma^{(p)}$-terms coincide; recursion and termination force them to be trivial.
Thus $\Phi(M)=\Phi(G)$ and the normal case gives $\gamma_3^{(p)}(M)=\gamma_3^{(p)}(G)$.
Apply induction inside $M$ to conclude $\Phi(H)=\Phi(M)=\Phi(G)$.
Now $H$ contains $\Phi(G)$, so it is normal as the inverse image of a subgroup of the abelian quotient $G/\Phi(G)$.
The normal case proves $\gamma_3^{(p)}(H)=\gamma_3^{(p)}(G)$.

Suppose $\Phi(H)=\Phi(Y)$ and let $\widetilde H$ be the full inverse image of $H$ in a central scalar extension $\widetilde Y\to Y$.
For an allowable extension with kernel $A\leq \gamma_3^{(p)}(\widetilde Y)$, $\Phi(\widetilde H)A=\Phi(\widetilde Y)$.
The criterion gives $\gamma_3^{(p)}(\widetilde H)=\gamma_3^{(p)}(\widetilde Y)$, so restriction is allowable.
The converse follows from subgroup inclusion.
Hence $\operatorname{res}^{-1}L(H)=L(Y)$.
The inclusion $H/\Phi(Y)\leq Y/\Phi(Y)$ has a linear retraction.
The restriction map $H^2(Y/\Phi(Y))\to H^2(H/\Phi(Y))$ is therefore onto; since $L(Y)$ and $L(H)$ are the inflation images from these Frattini quotients, $\operatorname{res}L(Y)=L(H)$.
We obtain an injection
\[
H^2(Y)/L(Y)\hookrightarrow H^2(H)/L(H).
\]

To bound capacity, choose a basis of $W=\Phi(Q)$ from commutator and power values.
The inputs to the chosen commutator and power values use at most $2s$ vectors, not necessarily independent.
Let $U$ be the span of these input vectors in $Q/W$, choose a basis of $U$, and let $H_0\leq Q$ be generated by lifts of that basis.
Let $k=\dim U\leq\min(d,2s)$.
Powers and commutators depend only on inputs modulo the central exponent-$p$ subgroup $W$, so their chosen basis values lie in $\Phi(H_0)$.
Thus
\[
\Phi(H_0)=W,\quad H_0/\Phi(H_0)\cong U,\quad d(H_0)=k.
\]
For the full inverse image $H$ of $H_0$ in $Y$, $\Phi(H)\gamma_3^{(p)}(Y)=\Phi(Y)$.
The criterion gives $\Phi(H)=\Phi(Y)$ and $\gamma_3^{(p)}(H)=\gamma_3^{(p)}(Y)$; hence $H/\gamma_3^{(p)}(H)\cong H_0$, $d(H)=k$, and $|\gamma_3^{(p)}(H)|=p^j$.
The restriction injection yields $h(Y)\leq r(H)-(D_k-s)$.
A central order-$p$ filtration of $\gamma_3^{(p)}(H)$ gives exactly $j$ allowable scalar extensions from $H_0$ to $H$, all with generator number $k$.
Lemma~\ref{lem:central-cohomology} bounds each increment in $r$ by $k$, since $-1+a+\epsilon\leq k$.
Therefore
\[
h(Y)\leq r(H)-(D_k-s)\leq\nu(H_0)+kj
\leq s(s+3)/2+2sj,
\]
by Lemma~\ref{lem:root-cover} and Proposition~\ref{prop:nucleus-bound} applied to the actual root $H_0$.
The commutator map of $H_0$ need not be onto.

For inflation, the normal-kernel filtration constructed after Lemma~\ref{lem:allowable-extensions} factors $X\to Y$ into $j$ central scalar extensions $X_0=Y,\ldots,X_j=X$.
Since the original kernel lies in $\Phi(X)$, each scalar kernel lies in the corresponding Frattini subgroup; every step is nonsplit and has generator number $d$.
The one-step degree-three equality in the proof of Lemma~\ref{lem:central-cohomology} and subadditivity of kernel dimensions under composition give
\[
\dim\ker(H^3(Y)\to H^3(X))
\leq\sum_{i=0}^{j-1}(d+r(X_i)-r(X_{i+1}))
=jd+r(Y)-r(X).
\]
For $j=0$, both sides are zero.
\end{proof}

\subsection{Annihilators of extension classes}
\label{subsec:annihilator-tails}

Use the same definition of $a_Y(\xi)$ for all $\xi\in H^2(Y)$, including $a_Y(0)=d(Y)$.
Probabilities in this subsection concern cohomology classes or scalar lines.
These probabilities will be applied conditionally at each vertex of an extension tree.

For a nontrivial finite $p$-group $Y$, set
\[
\begin{gathered}
d=d(Y),\quad s=\dim\Phi(Y/\gamma_3^{(p)}(Y)),\quad j=\log_p|\gamma_3^{(p)}(Y)|,
\\
h=r(Y)-(D_d-s),\quad K_Y=d(s+j)+s-h,
\end{gathered}
\]
and write $G_p=\prod_{i\geq1}(1-p^{-i})^{-1}<4$.
For an affine coset $\mathcal C$ of $L(Y)$, let $\Pr_{\mathcal C}$ denote probability when $\xi$ is chosen uniformly from $\mathcal C$.
When the set of allowable scalar lines is nonempty, let $\Pr_{\mathrm{adm}}$ denote probability for a uniformly chosen line.
In events involving $a_Y(\xi)$, such a line is represented by any nonzero $\xi$ on it; $a_Y(\xi)$ is unchanged by nonzero scaling.

\begin{proposition}\label{prop:annihilator-tail}
We have $K_Y\geq0$.
For every affine coset $\mathcal C$ of $L(Y)$ and $3\leq r\leq a\leq d$,
\[
\Pr_{\mathcal C}(a_Y(\xi)\geq a)
\leq\min\left\{1,G_p p^{r(d-a)-\binom d2+\frac{3K_Y}{r-2}}\right\}.
\]
The same bound holds with $\Pr_{\mathcal C}$ replaced by $\Pr_{\mathrm{adm}}$ whenever the allowable-line set is nonempty.
\end{proposition}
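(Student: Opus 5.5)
The plan is to bound the annihilator rank by a union bound over subspaces of $H^1(Y)$. We then compare cup products on $Y$ with cup products on the Frattini quotient $V=Y/\Phi(Y)$. The only loss in that comparison is the kernel of inflation $H^3(V)\to H^3(Y)$, and Lemma~\ref{lem:capacity-inflation}\ref{item:inflation-loss} bounds this kernel by exactly $K_Y$.

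\emph{Step 1: $K_Y\geq0$.} Write $Q=Y/\gamma_3^{(p)}(Y)$.
\begin{itemize}
\item In the proof of Proposition~\ref{prop:nucleus-bound}, $N^*$ injects into $\mathcal A(\beta,q)\subseteq(V\otimes W)^*\oplus W^*$. Hence $\nu(Q)\leq ds+s$.
\item By Lemma~\ref{lem:root-cover}, $h(Q)=\nu(Q)\leq ds+s$.
\item The $j$ allowable scalar steps from $Q$ to $Y$ each raise $h$ by $-1+a+\epsilon\leq d$, by Lemma~\ref{lem:central-cohomology}.
\end{itemize}
Together these give $h\leq ds+s+dj$, which is $K_Y\geq0$.

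\emph{Step 2: reduction to a fixed $r$-space.} The event $a_Y(\xi)\geq a$ means that some $a$-dimensional $A\leq H^1(Y)\cong V^*$ satisfies $A\cup\xi=0$. Such an $A$ contains $\genfrac{[}{]}{0pt}{}{a}{r}_p$ subspaces of dimension $r$. Averaging the number of $r$-subspaces $U$ with $U\cup\xi=0$ gives
\[
\Pr(a_Y(\xi)\geq a)\leq\frac{\genfrac{[}{]}{0pt}{}{d}{r}_p}{\genfrac{[}{]}{0pt}{}{a}{r}_p}\,\mathbb E_U\Pr_\xi(U\cup\xi=0).
\]
The Gaussian ratio is at most $G_pp^{r(d-a)}$ by~\eqref{eq:product-bounds}. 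Here $\mathbb E_U$ denotes the average over uniformly chosen $r$-subspaces $U$.

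For fixed $U$, the map $\phi_U:\xi\mapsto(\chi\cup\xi)_{\chi\in U}$ is linear from $H^2(Y)$ to $H^3(Y)^r$. Its zero set meets an affine coset $\mathcal C$ of $L(Y)$ in at most a coset of $\ker\phi_U\cap L(Y)$. So the probability is at most $p^{-\rank(\phi_U|_{L(Y)})}$.

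The same holds for $\Pr_{\mathrm{adm}}$, since the allowable lines are a disjoint union of nonzero affine cosets modulo scaling. The event is scale invariant, so a uniform line is a mixture of uniform elements of cosets.

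\emph{Step 3: rank on $L(Y)$.}
\begin{itemize}
\item $L(Y)$ is the image of $H^2(V)$, and inflation commutes with cup products. So $\phi_U|_{L(Y)}$ is the composite of the elementary abelian cup map $\psi_U:H^2(V)\to H^3(V)^r$ with inflation.
\item The kernel $\ker(H^2(V)\to H^2(Y))$ has dimension $r(V)-\dim L(Y)=D_d-(D_d-s)=s$, by Lemma~\ref{lem:allowable-extensions}.
\item Apply Lemma~\ref{lem:capacity-inflation}\ref{item:inflation-loss} to $Y\to V$, whose kernel $\Phi(Y)$ has order $p^{s+j}$. This gives $\dim\ker(H^3(V)\to H^3(Y))\leq(s+j)d+D_d-r(Y)=K_Y$.
\end{itemize}
In $H^*(V)$ the degree-one part contains the exterior generators, and the Bockstein images sit in degree two. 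A monomial count, like the exterior count in Proposition~\ref{prop:nucleus-bound}, shows that $\psi_U$ restricted to the exterior part $\Lambda^2V^*$ has rank large in terms of $r$ and $d$.

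A single $\chi\neq0$ already kills only the $\chi\wedge$-multiples, so its cup map has rank at least $\binom{d-1}{2}$. Together with the Bockstein and power contributions, and after subtracting the $s$ dimensions lost in degree two, I expect a rank of at least $\binom d2+K_Y$ before the degree-three loss.

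\emph{Step 4 (main obstacle): absorbing the inflation kernel.} Let $\mathcal K\leq H^3(V)$ be the inflation kernel, with $\dim\mathcal K\leq K_Y$. We need
\[
\mathbb E_U\,p^{\dim(\mathcal K\cap\operatorname{im}\psi_U)}\ \lesssim\ p^{3K_Y/(r-2)}.
\]
Taking the worst $U$ is too lossy here, because $\operatorname{im}\psi_U$ could contain all of $\mathcal K$. Instead I would fix a basis of $\mathcal K$ in echelon form with respect to exterior monomials.

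A degree-three class lies in $U\cup H^2(V)$ only if its leading part is divisible by elements of $U$. A nonzero $3$-form or Bockstein term is divisible by at most a $3$-dimensional space of linear forms.

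So for a random $r$-space $U$, each further dimension of $\mathcal K\cap\operatorname{im}\psi_U$ should force a linear condition on $U$ that costs about a factor $p^{-(r-2)/3}$ per dimension. Summing the resulting geometric tail gives the factor $p^{3K_Y/(r-2)}$, and $r\geq3$ is what keeps the ratio positive. Making this divisibility-and-codimension count rigorous, including the Bockstein and $p=2$ squaring terms, is the step I expect to be hardest.

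The $\min\{1,\cdot\}$ in the statement is trivial.
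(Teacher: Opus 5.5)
\emph{Gap: the main estimate, Step 4, is left unproved, and it rests on a misdiagnosis.}

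Steps 1 and 2 and the passage to allowable lines are sound. Step 1 is actually unnecessary: Lemma~\ref{lem:capacity-inflation}\ref{item:inflation-loss} applied to $Y\to V$ (your Step 3) already gives $0\le\dim\mathcal K\le K_Y$, which is how the paper obtains $K_Y\ge0$.

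Step 4 carries the whole estimate, and you leave it as a heuristic.
\begin{itemize}
\item You do not need to average over $U$. The bound $\Pr(U\cup\xi=0)\le p^{-\binom d2+3K_Y/(r-2)}$ holds for \emph{every} $r$-space $U$.
\item $\dim(\mathcal K\cap\operatorname{im}\psi_U)$ is not the right measure of loss. Since $\psi_U$ takes values in $H^3(V)^r$, the relevant inflation kernel is $\mathcal K^r$. If you instead mean the span $U\cup H^2(V)\le H^3(V)$, its containing $\mathcal K$ says nothing. What matters is the space of $z$ with $\chi\cup z\in\mathcal K$ for all $\chi\in U$ simultaneously.
\item Your Step 3 target of rank $\binom d2+K_Y$ before the degree-three loss is impossible once $K_Y>d$, because $\dim H^2(V)=\binom d2+d$. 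The loss has to be shown small; it cannot be paid for with surplus rank.
\end{itemize}

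\emph{The missing idea.}
\begin{enumerate}
\item Let $\pi:H^*(V)\to\Lambda(V^*)$ be the algebra quotient that kills Bocksteins for odd $p$ and squares for $p=2$. Split $H^2(V)=E\oplus P$ with $\pi:E\cong\Lambda^2V^*$.
\item Write $\xi=\xi_0+\operatorname{inf}z$ with $z$ uniform, and condition on the $P$-coordinate of $z$. Then $U\cup\xi=0$ is an affine system in the $E$-coordinate.
\item The difference space $F_0$ of that system satisfies $\pi(U)\wedge\pi(F_0)\le\pi(\mathcal K)$. This is a subspace of $\Lambda^3V^*$ of dimension at most $K_Y$.
\item Use the deterministic exterior-shadow inequality: $(r-2)\dim F\le3\dim(A\wedge F)$ whenever $\dim A=r>2$ and $F\le\Lambda^2V^*$. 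To prove it, row-reduce $F$ to distinct leading pairs in a basis extending one of $A$, with a suitable monomial order. Each leading pair extends by at least $r-2$ basis vectors of $A$ to a leading triple, and each triple arises from at most three pairs.
\item This gives $\dim F_0\le3K_Y/(r-2)$ for every $U$, hence the per-$U$ bound $p^{-\binom d2+3K_Y/(r-2)}$. Your double counting in Step 2 then finishes the proof.
\end{enumerate}

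Your remark that a nonzero $3$-form is divisible by at most three independent linear forms is the seed of this count. However, the exponent $3/(r-2)$ comes from this worst-case combinatorial inequality, not from a probabilistic cost per dimension over random $U$.
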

\begin{proof}
Apply Lemma~\ref{lem:capacity-inflation} to $Y\to V=Y/\Phi(Y)$, whose kernel has order $p^{s+j}$ and whose target has $\dim H^2(V)=D_d$.
Then $Z_Y=\ker(H^3(V)\to H^3(Y))$ satisfies $0\leq\dim Z_Y\leq d(s+j)+D_d-r(Y)=K_Y$, also proving $K_Y\geq0$.
The elementary abelian cohomology algebra is
\[
H^*(V)=
\begin{cases}
\Lambda(V^*)\otimes\operatorname{Sym}(\operatorname{Bock}V^*)&\text{if }p\text{ is odd}\\
\operatorname{Sym}(V^*)&\text{if }p=2.
\end{cases}
\]
The exterior generators have degree one and the polynomial generators in the odd case have degree two.
This calculation follows from periodic resolutions for cyclic factors and their tensor product.
Take the quotient by the degree-two polynomial generators for odd $p$, and by squares for $p=2$.
In either characteristic, the quotient gives an algebra map $\pi:H^*(V)\to\Lambda(V^*)$.
Hence $\mathscr Z=\pi(Z_Y)\leq\Lambda^3V^*$ has dimension at most $K_Y$.

We need the elementary exterior-shadow inequality
\[
(r-2)\dim F\leq3\dim(A\wedge F)
\quad(A\leq T,\ \dim A=r>2,\ F\leq\Lambda^2T).
\]
Choose a basis whose first $r$ vectors span $A$, order supports by distinct powers of two, and row-reduce $F$.
Each leading pair has at least $r-2$ extensions, while a triple arises from at most three pairs.
Distinct resulting leading terms are independent, proving the inequality.

Uniform $z\in H^2(V)$ gives a uniform class $\xi=\xi_0+\operatorname{inf}z$ in any fixed coset of $L(Y)$, because inflation onto $L(Y)$ has equal fibers.
Choose $H^2(V)=E\oplus P$, where $\dim E=\binom d2$ and $\pi:E\to\Lambda^2V^*$ is an isomorphism; $P$ is the square or Bockstein part.
Fix the $P$-coordinate of $z$.
For an $r$-space $A\leq H^1(Y)$, the requirement $A\cup\xi=0$ is an affine linear system on $E$.
If the solution set is nonempty, its difference space $F_0$ satisfies $A\wedge\pi(F_0)\leq\mathscr Z$.
The exterior-shadow inequality gives $\dim F_0\leq3K_Y/(r-2)$, so the probability of $A\cup\xi=0$ is at most $p^{-\binom d2+3K_Y/(r-2)}$.

An annihilator of dimension $a_Y(\xi)$ contains $\genfrac{[}{]}{0pt}{}{a_Y(\xi)}{r}_p$ subspaces of dimension $r$.
Double counting therefore gives
\[
\mathbb E\genfrac{[}{]}{0pt}{}{a_Y(\xi)}{r}_p
\leq\genfrac{[}{]}{0pt}{}{d}{r}_p p^{-\binom d2+\frac{3K_Y}{r-2}}.
\]
On $a_Y(\xi)\geq a$ its left-hand random variable is at least $\genfrac{[}{]}{0pt}{}{a}{r}_p$, and the Gaussian product formula gives $\genfrac{[}{]}{0pt}{}{d}{r}_p/\genfrac{[}{]}{0pt}{}{a}{r}_p\leq G_pp^{r(d-a)}$.
Equation~\eqref{eq:product-bounds} gives $G_p<4$.
Allowable classes are a union of equal-sized nonzero affine cosets; each allowable line has $p-1$ nonzero elements.
Since scaling does not change annihilators, the same bound holds on the nonempty allowable-line set.
\end{proof}

\begin{corollary}\label{cor:annihilator-estimates}
Let $\Pr$ denote $\Pr_{\mathcal C}$ for a fixed affine coset $\mathcal C$ of $L(Y)$, or $\Pr_{\mathrm{adm}}$ when the allowable-line set is nonempty.
Suppose $d\geq10000$ and $K_Y\leq3d^2/5$, and let $q_0=\lfloor d/100\rfloor$ and $\alpha=d/2-1000$.
Then
\[
\Pr(a_Y(\xi)\geq a)\leq p^{-\alpha a}\quad(q_0\leq a\leq d),
\quad \Pr(a_Y(\xi)\geq100)\leq p^{-\frac{9d^2}{20}}.
\]
\end{corollary}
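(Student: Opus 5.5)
The plan is to specialize Proposition~\ref{prop:annihilator-tail} with a good choice of the auxiliary parameter $r$ in each of the two regimes, using only $K_Y\leq3d^2/5$ and $G_p<4\leq p^2$. Substituting these into the exponent, it suffices in each case to make
\[
E(r,a)=r(d-a)-\binom d2+\frac{9d^2}{5(r-2)}+2
\]
at most the target exponent.

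\emph{Small-annihilator bound.} For $a=100$, take $r=100$. Then
\[
E\leq100d-\frac{d(d-1)}2+\frac{9d^2}{490}+2.
\]
The quadratic coefficient is $-\tfrac12+\tfrac{9}{490}$, and $\tfrac12-\tfrac{9}{490}\approx0.4816>9/20=0.45$. The spare $0.0316\,d^2$ absorbs $100d+d/2+2$ as soon as $d\geq10000$, since $0.0316\cdot10^4\approx316>100.5$. This gives $p^{-9d^2/20}$.

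\emph{Large-annihilator bound.} For $q_0\leq a\leq d$ we need $E(r,a)\leq-\alpha a=-(d/2-1000)a$. Choose $r$ of order $\sqrt d$, for instance $r=\lceil 2\sqrt d\,\rceil+2$ (any $r$ with $3\leq r\leq a$ is allowed, and $r\leq q_0\leq a$ since $d\geq10000$). The shadow term is then
\[
\frac{9d^2}{5(r-2)}\leq\frac{9}{10}d^{3/2}.
\]
It remains to compare
\[
r(d-a)-\frac{d^2}{2}+\frac d2+\frac9{10}d^{3/2}+2
\quad\text{with}\quad
-\frac{da}{2}+1000a .
\]
This is linear in $a$, so it suffices to check the endpoints $a=q_0$ and $a=d$.

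At $a=d$ the left side is $-d^2/2+O(d^{3/2})$ and the right side is $-d^2/2+1000d$. This holds once $\tfrac9{10}d^{3/2}+d/2+2\leq1000d$, that is, for $d^{1/2}\lesssim1100$, roughly $d\leq1.2\times10^6$. So a single $r\sim\sqrt d$ fails for huge $d$. The remedy is to let $r$ grow: take $r-2\approx d/300$ (still $\leq q_0$). The shadow term becomes about $540d$, which fits inside the $1000d$ slack at $a=d$. More precisely, the slope condition in $a$ is $-r+d/2-1000\geq0$, which holds.

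At $a=q_0\approx d/100$, the left side is at most
\[
\frac{d}{300}\cdot d-\frac{d^2}2+540d+O(d),
\]
while the right side is about $-d^2/200+10d$. The comparison is $d^2/300-d^2/2\leq-d^2/200$, that is, $0.0033-0.5\leq-0.005$, which holds with a large $d^2$ margin covering the linear terms when $d\geq10000$. Linearity in $a$ then gives the whole range.

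\textbf{Main obstacle.} The delicate point is the endpoint $a=d$, where the target exponent $-d^2/2+1000d$ has only linear slack against $-\binom d2$. There the shadow penalty $9d^2/(5(r-2))$ must stay below about $999d$, which forces $r\gtrsim d/555$. Meanwhile the endpoint $a=q_0$ needs $rd$ comfortably below $0.495d^2$. Fixing $r=\lfloor d/300\rfloor+2$ with explicit constants, I will verify both endpoint inequalities, plus the requirement $r\leq q_0$ for all $d\geq10000$, as elementary polynomial inequalities.
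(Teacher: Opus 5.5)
Your proposal is correct and follows the paper's argument. Both specialize Proposition~\ref{prop:annihilator-tail} with $r=a=100$ for the second bound, and with $r$ a fixed fraction of $d$ for the first. The paper takes $r=q_0$, so the shadow term is at most $360d$, and after adding $\alpha a$ the exponent is increasing in $a$ and already negative at $a=d$. Your choice $r=\lfloor d/300\rfloor+2$ works equally well, and since the slope in $a$ is positive, your separate check at $a=q_0$ is unnecessary.
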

\begin{proof}
Apply Proposition~\ref{prop:annihilator-tail} with $r=q_0$, using $q_0-2\geq d/200$.
The base-$p$ logarithm of the probability is at most $2+(d/100)(d-a)-d(d-1)/2+360d$.
After adding $\alpha a$, this becomes
\[
2-49d(d-a)/100+721d/2-1000a<0:
\]
it increases with $a$ and is already negative at $a=d$.
Finally take $r=a=100$.
Since $3K_Y/98<d^2/50$, the base-$p$ logarithm of $\Pr(a_Y(\xi)\geq100)$ is at most $-12d^2/25+201d/2+2\leq-9d^2/20$ for $d\geq10000$.
\end{proof}
\subsection{Extensions of roots with small Frattini subgroup}
\label{subsec:small-root-drift}

\begin{lemma}\label{lem:small-root-drift}
Let $Q$ be a root, with $d=d(Q)\geq64$ and $1\leq s=\dim\Phi(Q)\leq d/16$.
Let $Y$ be a finite $p$-group with $Y/\gamma_3^{(p)}(Y)\cong Q$ and depth $j=\log_p|\gamma_3^{(p)}(Y)|\leq d/8$.
If $Y$ has at least one allowable scalar line, then uniform sampling from its allowable scalar lines gives
\[
\Pr_{\mathrm{adm}}(a_Y(\xi)+\epsilon_Y(\xi)>0)\leq p^{-\frac{d^2}{4}},
\]
where $\xi$ is any nonzero representative of the sampled line.
\end{lemma}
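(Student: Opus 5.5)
The plan is to run the linear-algebraic argument from the proof of Proposition~\ref{prop:annihilator-tail} directly, with no Gaussian-binomial averaging. Each witness $x\in H^1(Y)$ of the event will be handled separately, followed by a union bound over the $p^d$ possible witnesses.

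\emph{Setup.} Apply Lemma~\ref{lem:capacity-inflation}\ref{item:inflation-loss} to $Y\to V=Y/\Phi(Y)$. As in Proposition~\ref{prop:annihilator-tail}, the kernel $Z_Y=\ker(H^3(V)\to H^3(Y))$ has dimension at most
\[
K_Y=d(s+j)+s-h\leq d(s+j)+s\leq\tfrac{3d^2}{16}+\tfrac d{16},
\]
using $h\geq0$, $s\leq d/16$ and $j\leq d/8$. The allowable-line measure is a mixture of uniform measures on nonzero affine cosets of $L(Y)$, and scaling does not affect $a_Y$ or $\epsilon_Y$. So it suffices to bound the probability for $\xi$ uniform in a fixed coset $\xi_0+L(Y)$. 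Write $\xi=\xi_0+\operatorname{inf}z$ with $z$ uniform in $H^2(V)=E\oplus P$, fix the $P$-coordinate, and let only the exterior part vary.

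\emph{The two events.} The event $a_Y(\xi)+\epsilon_Y(\xi)>0$ is contained in the union over $x\in H^1(Y)=V^*$ of the following two families.
\begin{itemize}
\item[(i)] For $x\neq0$: the event $x\cup\xi=0$.
\item[(ii)] For any $x$, including $x=0$: the event $\operatorname{Bock}(\xi)=x\cup\xi$.
\end{itemize}
For fixed $x$, each condition says that an affine-linear function of $z$ lies in $Z_Y$ after inflation. Hence its solution set in $E$ is empty or an affine space whose difference space $F$ satisfies $\Lambda_x(F)\subseteq Z_Y$. Here $\Lambda_x(z)=x\cup z$ in case (i) and $\Lambda_x(z)=\operatorname{Bock}(z)-x\cup z$ in case (ii), with $\operatorname{Bock}$ commuting with inflation. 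Consequently
\[
\dim F\leq\dim\ker(\Lambda_x|_E)+K_Y.
\]

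\emph{Kernel bounds.} In case (i) and in case (ii) with $x\neq0$, compose with the algebra map $\pi:H^*(V)\to\Lambda(V^*)$. The map $\pi\circ\operatorname{Bock}$ vanishes on $H^2(V)$. For odd $p$, $\operatorname{Bock}(uv)=\operatorname{Bock}(u)v-u\operatorname{Bock}(v)$ lies in the ideal of the polynomial generators. For $p=2$, $\operatorname{Sq}^1(uv)=u^2v+uv^2$ lies in the ideal of squares. Thus $\pi\Lambda_x(z)=\pm x\wedge\pi(z)$, whose kernel on $E\cong\Lambda^2V^*$ is $x\wedge V^*$, of dimension $d-1$.

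For $x=0$ in case (ii), I need $\operatorname{Bock}|_E$ to be injective. For odd $p$ it is the first map $\Lambda^2\to S^1\otimes\Lambda^1$ of the Koszul complex, which is injective. For $p=2$, the images $u_iu_j^2+u_i^2u_j$ of distinct monomials have disjoint monomial supports, so the map is again injective.

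Hence every single event has probability at most $p^{-\binom d2+d-1+K_Y}$.

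\emph{Union bound and arithmetic.} Summing over at most $2p^d$ events gives a base-$p$ logarithm of at most
\[
-\tfrac{d^2}2+\tfrac d2+2d+\tfrac{3d^2}{16}+\tfrac d{16}+1=-\tfrac{5d^2}{16}+O(d).
\]
This is at most $-d^2/4$ once $d^2/16$ exceeds the linear term, which holds for $d\geq64$.

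The step I expect to be the main obstacle is the Bockstein event at $x=0$. There the exterior projection $\pi$ gives no information, and one must check injectivity of $\operatorname{Bock}$ on the exterior part $E$ inside the full $H^3(V)$, together with the fact that $Z_Y$ still costs only $K_Y$ dimensions there. A secondary point to verify is that fixing the $P$-coordinate is legitimate, in particular that the $P$-part contributes only a constant to each affine condition.
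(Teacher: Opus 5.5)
Your proposal is correct and follows essentially the same route as the paper. Both arguments reduce to a fixed coset of $L(Y)$, take a union bound over the cup witnesses and the Bockstein witnesses $\lambda\in H^1(Y)$, and bound each affine condition by the rank of the corresponding operator on $H^2(V)$ minus the degree-three inflation loss $\dim Z_Y\le K_Y$.

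The only difference is in how the ranks are obtained. The paper computes exact ranks on all of $H^2(V)$, including the explicit kernel of $\partial+x_1$ at $p=2$. You instead restrict to the exterior complement $E$ and use the projection $\pi$ to get kernel dimension at most $d-1$ for every nonzero witness. That costs $d-1$ in the exponent, which your arithmetic absorbs for $d\ge 64$.

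One small wording point: the condition is $\operatorname{inf}(\Lambda_x(e))=c_x$ for a fixed class $c_x\in H^3(Y)$ coming from $\xi_0$, not membership of an affine function in $Z_Y$. Your conclusion about the difference space is nevertheless exactly right.
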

\begin{proof}
Use $V,K_Y,Z_Y$ and the bound $\dim Z_Y\leq K_Y$ from the proof of Proposition~\ref{prop:annihilator-tail}.
Let $T=H^1(V)$ and $N_d=\binom d2$; inflation identifies $T$ with $H^1(Y)$.
In the elementary abelian cohomology algebra, take a basis with $\chi=x_1$ for any $0\ne\chi\in T$.
In odd characteristic the exterior part of the image of cup multiplication $\chi\cup-:H^2(V)\to H^3(V)$ has dimension $\binom{d-1}{2}$, and its mixed part has independent basis $x_1\operatorname{Bock}x_i$, $1\leq i\leq d$.
The rank of cup multiplication by $\chi$ is therefore $\binom{d-1}{2}+d=N_d+1$.
At $p=2$, multiplication by a nonzero linear polynomial is injective, with rank $D_d$.

For every $\lambda\in T$, consider $\operatorname{Bock}-\lambda\cup-:H^2(V)\to H^3(V)$.
For odd $p$, restrict the input to $\Lambda^2T$ and project the output onto $T\otimes\operatorname{Bock}T$.
The cup term disappears and
\[
\operatorname{Bock}(x_ix_j)
=x_j\operatorname{Bock}x_i-x_i\operatorname{Bock}x_j\quad(i<j)
\]
gives $N_d$ independent images, since each pair has its own mixed coordinates.
At $p=2$, write $\partial=\operatorname{Bock}$.
It satisfies $\partial x_i=x_i^2$, the derivation law and $\partial^2=0$.
If $\lambda=0$, it kills squares and sends $x_ix_j$ to $x_i^2x_j+x_ix_j^2$, again $N_d$ independent images.
For $\lambda\ne0$, change basis to $\lambda=x_1$ and write a quadratic polynomial uniquely as $f=cx_1^2+x_1\ell+q$, where $\ell,q$ involve only $x_2,\ldots,x_d$ and $q$ includes square terms.
Then
\[
(\partial+x_1)f=cx_1^3+x_1(\partial\ell+q)+\partial q.
\]
The kernel of $\partial+x_1$ consists exactly of the quadratic polynomials with $c=0,q=\partial\ell$, by comparing powers of $x_1$ and using $\partial^2=0$.
This kernel has dimension $d-1$, so the rank is $D_d-(d-1)=N_d+1$.

Cup products and Bocksteins commute with inflation.
The operators $\chi\cup-$ and $\operatorname{Bock}-\lambda\cup-$ therefore descend through $I_2:H^2(V)\to H^2(Y)$, whose image is $L(Y)$.
Degree-three inflation loses at most $K_Y$ dimensions, so each operator on $L(Y)$ has rank at least $R_Y=\max(0,N_d-K_Y)$.
On any affine coset $\xi_0+L(Y)$, the equation $\chi\cup\xi=0$ has, when soluble, proportion at most $p^{-R_Y}$ of solutions.
The same holds for $\operatorname{Bock}_Y(\xi)=\lambda\cup\xi$ at fixed $\lambda$.
Union over the $(p^d-1)/(p-1)$ lines for $\chi\ne0$ and over all $p^d$ values of $\lambda$ gives
\[
\Pr_{\xi_0+L(Y)}(a_Y(\xi)+\epsilon_Y(\xi)>0)
\leq\min\left\{1,\left(\frac{p^d-1}{p-1}+p^d\right)p^{-R_Y}\right\}.
\]
The events $a_Y(\xi)>0$ and $\epsilon_Y(\xi)>0$ are invariant under nonzero scaling, so the affine-coset argument in Proposition~\ref{prop:annihilator-tail} passes this estimate to uniform allowable lines whenever the set of allowable lines is nonempty.

The bounds $s\leq d/16$, $j\leq d/8$ and $h(Y)\geq0$ imply
\[
K_Y\leq3d^2/16+d/16,\quad R_Y\geq5d^2/16-9d/16.
\]
As $(p^d-1)/(p-1)+p^d\leq p^{d+1}$, the probability is at most $p^{-5d^2/16+25d/16+1}\leq p^{-d^2/4}$, using $d^2-25d-16\geq0$ for $d\geq64$.
\end{proof}

\subsection{Descendants of a fixed root}
\label{subsec:descendant-count}

Write $m=d+s+t$ and $\delta=m-3d/2$, and define
\[
U=\lceil2\delta_++10\rceil,\quad
w=\lceil(\delta-s+1000)_+\rceil,
\]
\[
E_p(d,s,t)=(2s+1)w(s+w)+50U(U+1)+t\log_p(d+3).
\]

\begin{proposition}\label{prop:descendant-count}
Suppose $d\geq10000$, $s,t\geq1$, and $|\delta|\leq d/1000$.
Let $b_0=D_d-s$.
Then $0\leq w<t$ and, for every $Q\in\mathcal R_p(d,s)$,
\[
D_p(Q,t)\leq\frac{p^{t(b_0+\nu(Q))+E_p(d,s,t)}}{(p-1)^t}.
\]
If $\nu(Q)=0$, then $D_p(Q,t)=0$.
\end{proposition}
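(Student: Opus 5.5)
The plan is to count descendants by walking down an extension tree. By the remark after Lemma~\ref{lem:allowable-extensions}, every $G$ with $G/\gamma_3^{(p)}(G)\cong Q$ and $|\gamma_3^{(p)}(G)|=p^t$ arises from $Q$ through a chain $Q=Y_0,Y_1,\ldots,Y_t=G$. Each step $Y_i\to Y_{i+1}$ is an allowable central extension by $\mathrm C_p$, and isomorphic middle groups come from the same scalar line. Hence $D_p(Q,t)$ is at most the number of paths of length $t$ in the tree whose children at a vertex $Y$ are the allowable scalar lines.

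At $Y$ this number is
\[
p^{b_0}(p^{h(Y)}-1)/(p-1)\leq p^{b_0+h(Y)}/(p-1),
\]
so we need to control $h$ along paths. At the root, $h(Q)=r(Q)-b_0=\nu(Q)$ by Lemma~\ref{lem:root-cover}. If $\nu(Q)=0$ there are no allowable lines, which gives $D_p(Q,t)=0$ since $t\geq1$. By Lemma~\ref{lem:central-cohomology} the drift is $h(Y_{i+1})=h(Y_i)-1+a_{Y_i}(\xi)+\epsilon_{Y_i}(\xi)$. The number of paths is therefore
\[
\frac{p^{tb_0}}{(p-1)^t}\sum_{\text{paths}}\prod_i p^{h(Y_i)}\cdot(\text{weight}),
\]
and it is cleaner to treat it as an expectation. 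Sample a uniform random line at each step, so that
\[
\#\text{paths}=\mathbb E\prod_i\#\text{lines}(Y_i)\leq \frac{p^{tb_0}}{(p-1)^t}\,\mathbb E\, p^{\sum_i h(Y_i)}.
\]
The target is $\mathbb E\,p^{\sum_i h(Y_i)}\leq p^{t\nu(Q)+E_p}$. Since $h$ typically decreases by one per step when $a=\epsilon=0$, the bulk of the paths have $\sum_i h(Y_i)\leq t\nu$, and the error must come from steps where $a+\epsilon$ is positive.

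\textbf{Step 1: the small-$s$ regime.} When $s\leq d/16$, Lemma~\ref{lem:small-root-drift} applies for the first $d/8$ steps: each step has $a+\epsilon>0$ with probability at most $p^{-d^2/4}$, which is negligible against the at most $p^{O(sd)}$ gain per step. In this regime $h$ strictly decreases, so it hits zero after $\nu\leq s(s+3)/2$ steps and no descendants of depth beyond that survive.

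\textbf{Step 2: the general regime.} Here we use Corollary~\ref{cor:annihilator-estimates}, whose hypothesis $K_Y\leq 3d^2/5$ follows from $s,j\leq 51d/100$ in the strip. An increment $a\geq100$ has probability at most $p^{-9d^2/20}$, and a jump of size $a\geq q_0$ costs a factor $p^{-\alpha a}$ against a gain of at most $p^{a\cdot(\text{remaining steps})}$. Since the remaining steps number at most $t\approx d/2+O(\delta)$, this is controlled through $\alpha=d/2-1000$. I expect this balance to be exactly where $w=\lceil(\delta-s+1000)_+\rceil$ enters: the paths lose their advantage only within the last $w$ steps, and an increment there gains at most $(s+w)$ per unit, giving the term $(2s+1)w(s+w)$ with capacity bounded by Lemma~\ref{lem:capacity-inflation}\ref{item:state-capacity}. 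Small increments $a<100$ contribute the term $50U(U+1)$, with $U$ controlling how many can be afforded. The factor $(d+3)^t$ absorbs the choice of the values $a_i+\epsilon_i\in\{0,\ldots,d+1\}$ at each step, summed over the possible drift profiles.

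The inequality $0\leq w<t$ follows from $t=m-d-s=d/2+\delta-s$ together with $d\geq10000$ and $|\delta|\leq d/1000$, after checking that $\delta-s+1000<d/2+\delta-s$.

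\textbf{Main obstacle.} The hardest part is the bookkeeping of the exponential moment $\mathbb E\,p^{\sum_i h}$ under adaptive, history-dependent increments. The approach is a backward induction on the number of remaining steps $k$, with the hypothesis
\[
\mathbb E\bigl[p^{\sum h}\mid Y\bigr]\leq p^{k h(Y)+\Psi(k,h(Y))}
\]
for an explicit slack $\Psi$. Each step is handled by splitting the increment $a+\epsilon$ into three ranges, $0$, $1$ to $99$, and at least $100$, and bounding each with the tails above. The delicate point is verifying that $\Psi$ closes up to $E_p(d,s,t)$.
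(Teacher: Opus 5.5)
Your skeleton is right: the allowable-line tree, the survival-weighted expectation identity, the case $\nu(Q)=0$, and $0\le w<t$ all match the paper. What is missing is the estimate that makes the count work.

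\textbf{The missing first-moment bound.} Since $h_{i+1}\le h_i+a_i$, pathwise $\sum_{i<t}h_i\le t\nu(Q)+\sum_{i<t}u_ia_i$, where $u_i=t-1-i$ is the number of remaining steps. So an increment $a_i\ge1$ at an early depth is multiplied by $p^{u_ia_i}$ with $u_i$ up to about $d/2$. The two tails of Corollary~\ref{cor:annihilator-estimates} control only $a_i\ge q_0$ and $a_i\ge100$; they say nothing about $1\le a_i\le99$. Without further input those increments cost $p^{\Theta(d)}$ per step, hence $p^{\Theta(d^2)}$ over a path, far beyond $E_p(d,s,t)$. The paper supplies a bound valid at every depth. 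It combines three ingredients:
\begin{itemize}
\item the elementary-abelian computation $\rank(\chi\cup-)\ge\binom{d-1}{2}$ from the proof of Lemma~\ref{lem:small-root-drift}, which uses none of that lemma's restrictions on $s$ and $j$;
\item the degree-three inflation loss $\dim Z_Y\le K_Y$;
\item a union over the lines of characters $\chi$.
\end{itemize}
Together they give $\Pr(a_i>0)\le\frac{p^d-1}{p-1}p^{-\binom{d-1}{2}+K_Y}$. Because $K_Y\le d(s+i)+s$ and $s+i=d/2+\delta-1-u_i$, this is roughly $p^{-du_i+d\delta+O(d)}$. It decays exactly as fast as the gain grows, so $\mathbb E[p^{z_ia_i}\mathbf 1_{1\le a_i<q_0}]\le p^{-d}$ whenever $u_i\ge U=\lceil2\delta_++10\rceil$. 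This is where $U$ comes from: only on the last $U$ steps is the moment bounded crudely by $2p^{100u_i}$, using the $p^{-9d^2/20}$ tail for $a_i\ge100$, and that produces $50U(U+1)$. It is not a count of how many small increments can be afforded.

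\textbf{The deferred bookkeeping.} Verifying that $\Psi$ closes up is the remaining content of the proof. The paper avoids an $h$-dependent slack by decoupling: write $u_ia_i=z_ia_i+(u_i-\alpha)_+a_i$ with $z_i=\min(u_i,\alpha)$.
\begin{itemize}
\item The excess term is nonzero only in the \emph{first} $w$ steps, where $u_i>\alpha$, not the last $w$. It is bounded pathwise by $w\sum_{i<w}a_i\le(2s+1)w(s+w)$ via the capacity bound of Lemma~\ref{lem:capacity-inflation}\ref{item:state-capacity}.
\item The clipped term is handled by conditional moment bounds $\mathbb E[p^{z_ia_i}\mid Y]\le c_i$, with $c_i=d+3$ or $2p^{100u_i}$, multiplied together by backward induction on depth. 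So $(d+3)^t$ is a product of conditional moments, not a count of drift profiles.
\end{itemize}

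\textbf{Other corrections.}
\begin{itemize}
\item Your Step~1 should be dropped. Lemma~\ref{lem:small-root-drift} is only a probability bound at depths at most $d/8$, whereas here $t=d/2+\delta-s>d/8$ when $s\le d/16$.
\item It is also false that $h$ decreases along every path, or that nothing survives beyond depth $\nu(Q)$. For example, $C_{p^{t+2}}\times\F_p^{d-1}$ has canonical root $C_{p^2}\times\F_p^{d-1}$, with $s=1$ and $\nu\le2$, and it has depth $t$ for every $t$.
\item $K_Y<3d^2/5$ does not follow from $s,j\le51d/100$, which only gives $d(s+j)\le1.02d^2$. You need $s+j\le s+t-1=d/2+\delta-1$.
\end{itemize}
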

\begin{proof}
Let $\nu=\nu(Q)$.
If $\nu=0$, the root $Q$ has no allowable scalar line, so $D_p(Q,t)=0$.
We may therefore assume that $\nu>0$.
At each vertex, attach one child for each allowable scalar line.
The central filtration described after Lemma~\ref{lem:allowable-extensions} reaches every group counted by $D_p(Q,t)$ at a depth-$t$ leaf; repeated abstract groups only enlarge the count.
At depth $i$ the branching number is exactly
\[
n_i=p^{b_0}\frac{p^{h_i}-1}{p-1}.
\]
A vertex with $h_i=0$ has no child.
Choosing children uniformly when possible gives the finite-tree identity
\[
\#\{\text{depth-}t\text{ leaves}\}
=\mathbb E\left[\mathbf1_{\mathrm{survival}}\prod_{i<t}n_i\right],
\]
because a surviving path has probability $\prod_{i<t}n_i^{-1}$.

Let $u_i=t-1-i$.
Since $s+i=d/2+\delta-1-u_i$ and $K_Y=d(s+i)+s-h_i$, every relevant vertex satisfies $K_Y<3d^2/5$, so the tails of Corollary~\ref{cor:annihilator-estimates} apply.
The elementary-abelian cup-rank calculation in the proof of Lemma~\ref{lem:small-root-drift}, which does not use the parameter restrictions in that lemma, gives $\rank(\chi\cup-)\geq\binom{d-1}{2}$ for $\chi\ne0$.
Inflation and a union over character lines therefore give
\[
\Pr(a_i>0)\leq\min\left\{1,\frac{p^d-1}{p-1}
p^{-\binom{d-1}{2}+K_Y}\right\}.
\]
Let $\alpha=d/2-1000$ and $z_i=\min(u_i,\alpha)$ for $0\leq i<t$.
Suppose first that $u_i\geq U$.
For $1\leq a_i<q_0=\lfloor d/100\rfloor$, the bound for $\Pr(a_i>0)$ gives
\[
\mathbb E[p^{z_i a_i}\mathbf1_{1\leq a_i<q_0}]
\leq p^{-\frac{99du_i}{100}+d\delta+2d+\delta-1}\leq p^{-d}.
\]
For the last inequality use $u_i\geq2\delta+10$ when $\delta\geq0$, and $u_i\geq10$ when $\delta<0$.
For $a_i\geq q_0$, the multidimensional tail gives
\[
\mathbb E[p^{z_i a_i}\mathbf1_{a_i\geq q_0}]
\leq\sum_{a=q_0}^dp^{(z_i-\alpha)a}\leq d+1.
\]
Including $a_i=0$, the conditional moment is at most $d+3$.
If $u_i<U$, the event $a_i<100$ contributes at most $p^{100u_i}$, and its complement contributes at most $p^{u_i d-9d^2/20}\leq1$.
Thus the moment is at most $2p^{100u_i}$.
The sum of the suffix weights $u_i<U$ is at most $U(U-1)/2$.
To justify multiplication of these conditional bounds, let $\Psi_i(Y)$ be the expected value of
\[
\mathbf1_{\mathrm{survival}}\prod_{i\leq k<t}p^{z_k a_k}
\]
for a path starting at a vertex $Y$ of depth $i$.
We have $\Psi_t(Y)=1$, and $\Psi_i(Y)=0$ if $Y$ has no child and $i<t$.
Otherwise,
\[
\Psi_i(Y)=\frac1{n_i}\sum_{\ell}
p^{z_i a_Y(\ell)}\Psi_{i+1}(E_\ell),
\]
where the sum is over the allowable scalar lines at $Y$, and $E_\ell$ is the chosen extension representing $\ell$.
Let
\[
c_i=\begin{cases}
d+3&\text{if }u_i\geq U\\
2p^{100u_i}&\text{if }u_i<U.
\end{cases}
\]
The bounds for $u_i\geq U$ and $u_i<U$ give a conditional expectation of $p^{z_i a_i}$ at most $c_i$ at every vertex of depth $i$.
Induction from depth $t$ therefore gives $\Psi_i(Y)\leq\prod_{i\leq k<t}c_k$.
In particular,
\[
\mathbb E\left[\mathbf1_{\mathrm{survival}}
\prod_{i<t}p^{z_i a_i}\right]
\leq(d+3)^tp^{50U(U+1)}.
\]
No independence between successive extensions is required.

Lemma~\ref{lem:central-cohomology} gives $h_{i+1}=h_i-1+a_i+\epsilon_i$, $0\leq\epsilon_i\leq1$.
Thus $h_{i+1}\leq h_i+a_i$, and a capacity $h_n\leq B+An$ gives
\[
\sum_{i<n}a_i=h_n-\nu+n-\sum_{i<n}\epsilon_i\leq B+(A+1)n.
\]
Take $B=(2s+1)s$, $A=2s$, which follows from Lemma~\ref{lem:capacity-inflation}.
Now $u_i-\alpha=\delta-s+999-i$ has positive part only for $i<w$, and that positive part is at most $w$.
Hence pathwise
\[
\sum_i(u_i-\alpha)_+a_i\leq(2s+1)w(s+w).
\]
Since $t=d/2+\delta-s$ and $d\geq10000$, the definitions also give $0\leq w<t$.
Finally $\sum_{i<t}h_i\leq t\nu+\sum_{i<t}u_i a_i$.
Insert this into the tree identity and use $n_i\leq p^{b_0+h_i}/(p-1)$, the clipped moments and the pathwise capacity bound to obtain the result.

\end{proof}

\subsection{Roots with bounded Frattini subgroup}
\label{subsec:bounded-frattini}

\begin{proposition}\label{prop:bounded-frattini}
Fix a prime $p$ and an integer $S\geq1$.
Let $N_{p,\leq S}(m)$ count the order-$p^m$ classes with $\dim\Phi(G/\gamma_3^{(p)}(G))\leq S$.
Then
\[
N_{p,\leq S}(m)\leq p^{\frac{2}{27}m^3-c_Sm^2\log_2m},\quad
c_S=\frac1{100\log_2(2S+1)}
\]
for sufficiently large $m$, with threshold allowed to depend on $p,S$.
In particular, $S=2$ gives saving $\ell_m m^2/100$, where $\ell_m=\log m/\log5$.
\end{proposition}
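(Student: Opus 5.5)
The plan is to stratify by the depth of the lower exponent-$p$ central series. I will show that a bounded root Frattini subgroup forces many layers, and then re-run the Sims-type presentation count of Proposition~\ref{prop:generator-localization} keeping the layer structure, so that each layer boundary costs a factor of order $p^{cm^2}$.

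First, localize $d$. By Proposition~\ref{prop:generator-localization} with a small fixed $\varepsilon$, the classes with $|d(G)/m-2/3|\geq\varepsilon$ number at most $p^{(2/27-c_\varepsilon)m^3+O(m^{8/3})}$, which is far below the target. So I may assume $d=(2/3+O(\varepsilon))m$, hence $m-d\geq m/4$, and by hypothesis $s\leq S$.

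Second, show there are many layers. As in the proof of Lemma~\ref{lem:capacity-inflation}\ref{item:state-capacity}, choose $k\leq\min(d,2s)\leq 2S$ generators whose powers and commutators span $\Phi(Q)$, and let $H$ be the preimage in $G$ of the subgroup they generate. The subgroup criterion proved there gives $\Phi(H)=\Phi(G)$ and $\gamma_3^{(p)}(H)=\gamma_3^{(p)}(G)$. I would push this criterion down the series to get $\gamma_i^{(p)}(H)=\gamma_i^{(p)}(G)$ for all $i\geq2$, by iterating it in the quotients $G/\gamma_{i+1}^{(p)}(G)$. Since $d(H)\leq 2S$, the layer $\gamma_i^{(p)}(H)/\gamma_{i+1}^{(p)}(H)$ is spanned by $p$th powers and commutators of the previous layer with the $\leq 2S$ generators, so its dimension is at most $(2S+1)$ times the previous one. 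Hence $|\Phi(G)|\leq p^{S(2S+1)^{c}}$, where $c$ is the $p$-class. Combined with $m-d\geq m/4$, this gives $c\geq\log_{2S+1}(m/4S)$, which is of order $\log_2 m/\log_2(2S+1)$; this is the source of $c_S$.

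Third, count with the layers kept. I would refine the count of \cite[Theorem~5.7]{BNV2007}, choosing structural indices adapted to the lower exponent-$p$ central series with layer dimensions $e_2,\dots,e_c$ summing to $m-d$. The crude exponent $\frac12d^2(m-d)$ comes from choosing each generator commutator $[x_a,x_b]$ freely in $\Phi(G)$. I expect the consistency relations to force the following. The component of $[x_a,x_b]$ in each layer $i\geq3$ is free, but the commutators of the generators with layer-$(i-1)$ elements are then determined by Jacobi/collection from the $H$-part. The power data and the commutators among the $\leq 2S$ special generators contribute only $p^{O(m^2)}$. The aim is to show that each layer boundary removes at least about $d^2/4$ from the free-parameter exponent, or, equivalently, that the free commutator data lives in a space of dimension at most $\frac12 d^2(m-d)-c'\,c\,m^2$. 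Optimizing over $d$ then gives $\frac{2}{27}m^3-c'' m^2\log_{2S+1}m+O(m^{8/3})$; the errors $O(m^{8/3})$ and the polynomial number of strata are absorbed for large $m$, leaving $c_S=1/(100\log_2(2S+1))$. For $S=2$ this is $m^2\log m/(100\log 5)$.

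The hard step is the third one: extracting a saving proportional to the number of layers from the presentation count. The Newman--Seeley refinement loses only $O(m^{5/2})$ per stratum, so the per-layer saving must be proved directly. My first attempt would be an induction on layers using the tree of Section~\ref{sec:descendants}. Across a layer boundary, the capacity of Lemma~\ref{lem:capacity-inflation}\ref{item:state-capacity} is small, because it is governed by $k\leq 2S$ rather than $d$. The $L(Y)$-part of each extension is the only large factor, and I would show it is forced to repeat identical choices within a layer, yielding a quotient by a $\GL$-type group of order $p^{\Omega(m^2)}$ per layer.
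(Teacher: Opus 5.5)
Your Steps 1 and 2 are sound. The localization of $d$ via Proposition~\ref{prop:generator-localization} is legitimate. Your layer argument also works: the subgroup criterion of Lemma~\ref{lem:capacity-inflation} does propagate down the series, and layer dimensions grow by at most a factor $2S+1$. This gives $p$-class at least about $\log_{2S+1}m$, which is the same numerology as the paper's survival recursion $u_{i+1}\le B_s+(A+1)u_i$ with $A+1=2s+1$. But a lower bound on the class saves nothing by itself. Step~3, the only place a saving is produced, is stated as an expectation ("I expect", "the aim is to show"), not proved. The presentation count behind Proposition~\ref{prop:generator-localization} gives only $\frac{2}{27}m^3+O(m^{8/3})$ and does not track layers. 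So the whole content of the proposition sits in the step you have not supplied.

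The mechanism you sketch for the tree is also not the one that works. The $L(Y)$-component is free at every step. The allowable classes form a union of nonzero cosets of $L(Y)$, and every class in such a coset gives a valid allowable extension. The paper pays the full branching factor $p^{D_d-s}$ at every step, and these factors build the cubic main term $t(D_d-s)\approx(m-d)d^2/2$. So nothing forces "repeated choices within a layer". The paper also uses no isomorphism identifications: leaves are counted with multiplicity.

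The saving comes from path survival, and it has three ingredients.
\begin{enumerate}
\item Lemma~\ref{lem:central-cohomology} gives $h_{i+1}=h_i-1+a_i+\epsilon_i$.
\item Lemma~\ref{lem:capacity-inflation}\ref{item:state-capacity} gives the capacity bound $h_j\le s(s+3)/2+2sj$. Together with the first item, a path surviving to depth $t$ must contain at least $k_J\approx\log_{2s+1}J$ exceptional steps ($a_i+\epsilon_i>0$) among its first $J\le d/8$ steps.
\item The key missing estimate is Lemma~\ref{lem:small-root-drift}. For $s\le d/16$ and depth $j\le d/8$, the operators $\chi\cup-$ ($\chi\ne0$) and $\operatorname{Bock}-\lambda\cup-$ have rank at least $\binom d2-K_Y$ on $L(Y)$. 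Here $K_Y\le 3d^2/16+d/16$ bounds the degree-three inflation loss. Hence a uniformly random allowable line is exceptional with conditional probability at most $p^{-d^2/4}$.
\end{enumerate}
Successive conditioning in the tree identity then gives $D_p(Q,t)\le p^{J+t(b_0+B_s)+st(t-1)-k_Jd^2/4}$, and this produces $c_S$.

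Your heuristic "$d^2/4$ per layer boundary" should really read "$d^2/4$ per forced exceptional step, within the first $d/8$ steps". The depth restriction is essential, because $K_Y$ grows with depth and the rank bound degrades. Without these cohomological inputs, your plan has no argument producing the $m^2\log m$ saving.
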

\begin{proof}
If $s=0$, then $\gamma_2^{(p)}(G)=\gamma_3^{(p)}(G)$; recursion and termination make this subgroup trivial, so $G$ is elementary abelian, one class at each order.
For $1\leq s\leq S$, let $B_s=s(s+3)/2$ and $A=2s$.
Lemma~\ref{lem:capacity-inflation} gives $h_j\leq B_s+Aj$, and branching is at most $p^{b_0+B_s+Aj}$, with $b_0=D_d-s$, since $(p-1)^{-1}\leq1$.

Call a step exceptional when $a_j+\epsilon_j>0$; every other step decreases $h$ by one.
If $d\geq\max(64,16S)$, then $s\leq d/16$.
At depths $j\leq d/8$, Lemma~\ref{lem:small-root-drift} bounds the conditional exceptional fraction by $p^{-d^2/4}$.
For $t\geq1$, if a surviving length-$t$ path has $k$ exceptional steps among its first $J\leq\min(t-1,\lfloor d/8\rfloor)$ steps, with the exceptional steps ending at depths $u_1<\cdots<u_k$, survival forces
\[
u_1\leq B_s,\quad u_{i+1}\leq B_s+(A+1)u_i,\quad
J<\frac{B_s((A+1)^{k+1}-1)}A.
\]
A path surviving to depth $t$ therefore requires at least
\[
k_J=\max\left\{0,
\left\lceil\frac{\log(1+AJ/B_s)}{\log(A+1)}\right\rceil-1\right\}
\]
exceptions.
For each specified subset of $k_J$ positions, successive conditioning gives a factor at most $p^{-d^2/4}$ at each position.
The tree identity in the proof of Proposition~\ref{prop:descendant-count}, the branching bound $p^{b_0+B_s+Aj}$ at depth $j$, and at most $2^J\leq p^J$ choices of exceptional positions therefore give
\[
D_p(Q,t)\leq p^{J+t(b_0+B_s)+st(t-1)-\frac{k_Jd^2}{4}}.
\]
At every $d$, including the finitely many values below $\max(64,16S)$, the capacity bound alone gives $D_p(Q,t)\leq p^{t(b_0+B_s)+st(t-1)}$.

There are at most $p^{sD_d}$ roots by Lemma~\ref{lem:root-generator-tail}.
At $t=0$ the empty path has count one; use this root bound directly and include the case $t=0$ in the range $t<m/100$ below.
The constants in the following $O$-terms depend only on $S$.
At fixed $d+s+t=m$, the rough exponent is $(m-d)d^2/2+O(m^2)$, uniformly for $1\leq s\leq S$.
For $d\leq m/2$ the leading term $(m-d)d^2/2$ is at most $m^3/16$.
For $t<m/100$, eventually $s+t<m/50$, so the leading term is at most $m^3/100$.
Both ranges have a fixed cubic gap.
Otherwise $d>m/2$ and $t\geq m/100$.
Eventually $d\geq\max(64,16S)$; take the largest permitted $J$, which satisfies $J\geq m/200$.
Then $k_J\geq\log_2m/\log_2(2S+1)-O(1)$ and $d^2/4>m^2/16$.
Since the unrestricted leading term is at most $2m^3/27$, the fixed-parameter logarithm is at most
\[
\frac{2m^3}{27}-\frac{m^2\log_2m}{16\log_2(2S+1)}+O(m^2).
\]
The factor $p^J$, the error terms, and the at most $S(m+1)$ parameter choices are absorbed by the growing quadratic saving.
Weakening $1/16$ to $1/100$ and including $s=0$ proves the claim.
\end{proof}

\section{Proof of the density theorem}\label{sec:density}

\begin{lemma}\label{lem:weighted-curvature}
Use the parameter range of Proposition~\ref{prop:descendant-count} and the quantities $U,w,E_p(d,s,t)$ defined in Subsection~\ref{subsec:descendant-count}, and set $z=s+t$.
For each fixed integer $k\geq0$,
\[
\mathcal F(d,z)+E_p(d,s,t)+kd(m-d)
\leq\lambda(m)+\frac{2k}{9}m^2+O(d\log d).
\]
The implied constant and a sufficiently large dimension threshold depend only on $k$, uniformly over primes and all parameters in the strip.
\end{lemma}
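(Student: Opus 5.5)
The plan is to reduce everything to one-variable calculus in the deviation $\delta=m-3d/2$, with $m$ held fixed. Put $x=2\delta/3$. Then
\[
d=\tfrac23m-x,\quad z=m-d=\tfrac13m+x,\quad |x|\leq d/1000,
\]
and the constraint $m=d+z$ is built in. I would expand each of the four summands on the left in powers of $x$. The key point is that the cubic part of $\mathcal F(d,z)$ is stationary at $x=0$ with strictly negative curvature of order $m$. This curvature absorbs every lower-order term that grows in $|\delta|$.

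\emph{Step 1: expand $\mathcal F$.} Since $D_d=d^2/2+d/2$,
\[
\mathcal F(d,z)=\tfrac12zd^2+\tfrac12zd-d^2-z^2 .
\]
Substituting,
\[
\tfrac12 zd^2=\tfrac12\Bigl(\tfrac m3+x\Bigr)\Bigl(\tfrac{2m}3-x\Bigr)^2=\tfrac{2m^3}{27}-\tfrac{m}{2}x^2+\tfrac12x^3 .
\]
The linear term cancels: $\tfrac12\cdot\tfrac{4m^2}{9}-\tfrac12\cdot\tfrac m3\cdot\tfrac{4m}{3}=0$. The quadratic part $\tfrac12zd-d^2-z^2$ equals $-\tfrac{4m^2}{9}+O(m|x|)+O(x^2)$, since its value at $x=0$ is $\tfrac{m^2}{9}-\tfrac{4m^2}9-\tfrac{m^2}9$. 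The strip gives $|x|^3\leq(d/1000)x^2\leq mx^2/1000$. Hence
\[
\mathcal F(d,z)\leq\lambda(m)-\tfrac{49}{100}\,m x^2+O(m|x|)
\]
with an absolute constant.

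\emph{Step 2: bound the remaining summands.}
\begin{itemize}
\item For each fixed $k$, $kd(m-d)=kdz=k\bigl(\tfrac{2m^2}{9}+\tfrac{m}{3}x-x^2\bigr)\leq\tfrac{2k}{9}m^2+O_k(m|x|)$.
\item $t\log_p(d+3)\leq m\log_2(d+3)=O(d\log d)$, uniformly in $p$.
\item $50U(U+1)=O(\delta_+^2+1)=O(x^2+1)$.
\item For $(2s+1)w(s+w)$: it vanishes unless $\delta>s-1000$. In that case $s<\delta+1000$ and $w\leq\delta-s+1001$, so the term is at most $(2\delta+2001)(\delta+1001)(2\delta+1002)$. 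This is $4\delta^3+O(\delta^2+1)$, i.e.\ $\tfrac{27}{2}|x|^3+O(x^2+1)$.
\end{itemize}

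\emph{Step 3 (main obstacle): absorb the cubic term from $E_p$.} This is the only place where a term genuinely competes with the curvature rather than being lower order. I would use the strip in the form $|x|\leq d/1000\leq m/1500$ to get
\[
\tfrac{27}{2}|x|^3\leq\tfrac{27}{3000}\,mx^2<\tfrac1{100}\,mx^2 .
\]
This leaves curvature at least $\tfrac{48}{100}mx^2$. The $O(x^2)$ terms are likewise at most $\tfrac1{100}mx^2$ once $m$ exceeds a threshold depending only on $k$ through the absolute constants.

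Collecting, the left side is at most
\[
\lambda(m)+\tfrac{2k}{9}m^2-\tfrac{47}{100}mx^2+C_km|x|+O(d\log d)+O(1).
\]
Maximizing $-\tfrac{47}{100}mx^2+C_km|x|$ over $x$ gives at most $C_k^2m/(4\cdot 0.47)=O_k(d)$, which is absorbed into $O(d\log d)$. None of the constants depend on $p$, $s$ or $t$ beyond $m=d+s+t$, which gives the asserted uniformity.
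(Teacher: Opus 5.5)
Your argument is correct and is essentially the paper's proof: both expand $\mathcal F-\lambda(m)$ and $kd(m-d)-\tfrac{2k}{9}m^2$ about $d=2m/3$, use the narrow strip so that the negative curvature of order $m\delta^2$ absorbs the cubic $\delta_+^3$ contribution of $(2s+1)w(s+w)$ and the quadratic $U$-terms, and finish by completing the square on the linear term. The paper differs only in using the sharper bound $y^3/2+y^2$ with $y=\delta_++1001$ and in splitting by the sign of $\delta$. There are two harmless slips: the correct chain is $|x|=\tfrac23|\delta|\le d/1500\le m/1500$ (your step $d/1000\le m/1500$ fails for $\delta<0$), and your product bound for $(2s+1)w(s+w)$ becomes negative when $\delta<-501$, although the term is $O(1)$ there anyway.
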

\begin{proof}
Let $y=\delta_++1001$.
When $w>0$, use $s+w\leq y$ and
\[
(2s+1)(y-s)=y^2/2+y/2+1/8-2(s-y/2+1/4)^2
\]
to obtain $(2s+1)w(s+w)\leq y^3/2+y^2$; when $w=0$ this term vanishes.
Since $U\leq2\delta_++1001$, first fix an absolute $K$ with
\[
(2s+1)w(s+w)+50U(U+1)
\leq\delta_+^3/2+K(\delta_+^2+\delta_++1).
\]
All subsequent dimension thresholds are chosen after fixing $K$.
The exact identities, with $m=3d/2+\delta$, are
\[
\begin{aligned}
\mathcal F(d,z)-\lambda(m)
&=5d\delta/6-(d/3+5/9)\delta^2-2\delta^3/27,\\
d(m-d)-2m^2/9&=d\delta/3-2\delta^2/9.
\end{aligned}
\]
If $\delta<0$, both differences are nonpositive in the strip: $d/3+5/9+2\delta/27>0$.
The added error apart from $t\log_p(d+3)$ is then at most $K$.
If $\delta\geq0$, the positive cubic error satisfies $\delta^3/2\leq d\delta^2/2000$.
Choose an absolute dimension threshold after $K$ so that its fixed linear and quadratic coefficients are absorbed by $d\delta/6$ and the remaining negative quadratic term.
The total difference, apart from $t\log_p(d+3)$, is then at most
\[
(1+k/3)d\delta-d\delta^2/4+K
\leq(1+k/3)^2d+K.
\]
For both signs of $\delta$, the total difference $\mathcal F(d,z)+E_p(d,s,t)+kd(m-d)-\lambda(m)-(2k/9)m^2$ is therefore at most $(1+k/3)^2d+K+t\log_p(d+3)$.
Finally $t<d$ gives $t\log_p(d+3)\leq d\log_2(d+3)$, uniformly in $p$.
\end{proof}

\subsection{Counting nonroots}
\label{subsec:nonroot-count}

\begin{proposition}\label{prop:nonroot-count}
For every fixed prime $p$, there is $m_0(p)$ such that
\[
N_p(m)\leq p^{\lambda(m)-\frac{m^2}{300}}
\quad(m\geq m_0(p)).
\]
\end{proposition}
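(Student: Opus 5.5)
Every nonroot of order $p^m$ has parameters $(d,s,t)$ with $t\geq1$, canonical root $Q\in\mathcal R_p(d,s)$, and is counted by $D_p(Q,t)$. Hence
\[
N_p(m)\leq\sum_{d+s+t=m,\ t\geq1}\ \sum_{Q\in\mathcal R_p(d,s)}D_p(Q,t).
\]
I will split the parameter triples into three regions and show that each contributes at most $p^{\lambda(m)-m^2/200}$. The polynomially many triples are then absorbed into the weaker saving $m^2/300$.

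\emph{Far region.} First discard all triples with $|d/m-2/3|\geq\varepsilon$ for a small fixed $\varepsilon$. By Proposition~\ref{prop:generator-localization} these groups number at most $p^{(2/27-c_\varepsilon)m^3+O(m^{8/3})}$, which is far below $p^{\lambda(m)-m^2}$.

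\emph{Bounded Frattini region.} Next discard triples with $s\leq S$ for a suitable fixed $S$. Here I would take $S=2$, which matches the remark in Proposition~\ref{prop:bounded-frattini}. That proposition gives a saving of order $m^2\log m$ below $2m^3/27$. Since $\lambda(m)=2m^3/27-4m^2/9$, this saving beats $4m^2/9+m^2/300$ for large $m$.

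\emph{Strip region.} What remains has $d\sim2m/3$ and $s\geq3$. Choosing $\varepsilon$ small enough puts us in the strip $|\delta|\leq d/1000$, $d\geq10000$, so Propositions~\ref{prop:descendant-count} and~\ref{prop:weighted-roots} both apply. Summing the bound of Proposition~\ref{prop:descendant-count} over $Q$ gives
\[
\sum_Q D_p(Q,t)\leq p^{E_p(d,s,t)}\sum_{\nu(Q)>0}p^{t(D_d-s)+t\nu(Q)}.
\]
Roots with $\nu(Q)=0$ contribute nothing, and the factor $(p-1)^{-t}\leq1$ can be dropped. Proposition~\ref{prop:weighted-roots} then bounds the right side by $p^{\mathcal F(d,z)+E_p(d,s,t)-d^2/20}$. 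Lemma~\ref{lem:weighted-curvature} with $k=0$ turns this into $p^{\lambda(m)-d^2/20+O(d\log d)}$. Since $d\geq0.66m$, we have $d^2/20\geq m^2/46$, which is comfortably more than $m^2/300$ plus the $O(m)$ count of triples and the $O(d\log d)$ error.

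\emph{Main obstacle.} The delicate step is checking that the three regions really cover every triple with $t\geq1$, with consistent thresholds. Specifically, $|d/m-2/3|<\varepsilon$ must imply $|\delta|\leq d/1000$. This holds because $\delta=m-3d/2=\tfrac{3}{2}m(2/3-d/m)$, so it suffices to take $\varepsilon\leq1/2000$. The constant $c_\varepsilon=\min\{7/3375,7\varepsilon^2/15\}$ stays positive for this choice, so the far region is still fine. Finally, $m_0(p)$ must be large enough to satisfy the thresholds of Proposition~\ref{prop:bounded-frattini} (which depend on $p$ and $S$), $d\geq10000$, and the absorption of the error terms. All of these are fixed finite requirements once $p$, $\varepsilon$ and $S$ are fixed.
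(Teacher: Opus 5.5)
Your proposal is correct and is essentially the paper's proof. It uses the same three-way split: $s\le2$ via Proposition~\ref{prop:bounded-frattini}, generator number away from $2m/3$ via Proposition~\ref{prop:generator-localization}, and the central strip via Propositions~\ref{prop:weighted-roots} and~\ref{prop:descendant-count} with Lemma~\ref{lem:weighted-curvature} at $k=0$.

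There is one small arithmetic slip. Since $|\delta|/d=\tfrac32|d/m-2/3|/(d/m)$, the choice $\varepsilon\le1/2000$ does not force $|\delta|\le d/1000$; you need roughly $\varepsilon\le 2/4503$. Taking $\varepsilon=1/10000$, as the paper does, fixes this.
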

\begin{proof}
Let $Q=G/\gamma_3^{(p)}(G)$.
We use the canonical parameters $d=d(G)$, $s=\dim\Phi(Q)$ and $t=\log_p|\gamma_3^{(p)}(G)|\geq1$.
Partition the nonroots of order $p^m$ into the three disjoint families
\[
s\leq2;\quad
s\geq3,\ |d/m-2/3|\geq1/10000;\quad
s\geq3,\ |d/m-2/3|<1/10000.
\]
By Proposition~\ref{prop:bounded-frattini}, the first contributes eventually at most $p^{\lambda(m)-m^2}$, since $\ell_m\to\infty$.
Proposition~\ref{prop:generator-localization} with fixed $\varepsilon=1/10000$ bounds the second by $p^{(2/27-c)m^3+O(m^{8/3})}\leq p^{\lambda(m)-m^2}$ eventually, for fixed $c>0$.

In the central strip $d>3m/5$ and
\[
\frac{|\delta|}{d}
=\frac32\frac{|d/m-2/3|}{d/m}<\frac1{1000}.
\]
Both Propositions~\ref{prop:weighted-roots} and~\ref{prop:descendant-count} apply for large $m$.
Let $z=s+t$ and discard $(p-1)^{-t}\leq1$ in the fiber estimate.
Roots with zero nucleus have no descendants.
Summing over the roots in $\mathcal R_p(d,s)$ with nonzero nucleus gives, for each triple $(d,s,t)$,
\[
\operatorname{count}(d,s,t)
\leq p^{\mathcal F(d,z)-\frac{d^2}{20}+E_p(d,s,t)}.
\]
Lemma~\ref{lem:weighted-curvature} with $k=0$ gives, after increasing the dimension threshold,
\[
\mathcal F(d,z)-\lambda(m)+E_p(d,s,t)\leq d^2/100.
\]

Each central-strip triple therefore contributes at most $p^{\lambda(m)-d^2/25}\leq p^{\lambda(m)-m^2/100}$, since $m<2d$.
There are at most $(m+1)^2$ triples $(d,s,t)$; the sum of their contributions is eventually at most $p^{\lambda(m)-m^2/200}$.
Combining the three families gives, for all sufficiently large $m$,
\[
N_p(m)
\leq2p^{\lambda(m)-m^2}+p^{\lambda(m)-\frac{m^2}{200}}
\leq p^{\lambda(m)-\frac{m^2}{300}}.\qedhere
\]
\end{proof}

\subsection{Completion of the proof}
\label{subsec:density-conclusion}

\begin{proof}[\rm\textbf{Proof of Theorem~\ref{thm:structure}}]
For sufficiently large $m$, Lemma~\ref{lem:root-lower-bound} and Proposition~\ref{prop:nonroot-count} give
\[
0\leq\frac{P_p(m)-R_p(m)}{R_p(m)}
=\frac{N_p(m)}{R_p(m)}\leq p^{-\frac{m^2}{300}}.
\]
Together with $R_p(m)\leq Q_p(m)\leq P_p(m)$, this proves the theorem.
\end{proof}

\section{Sharp enumeration and the generator distribution}\label{sec:sharp-enumeration}

Throughout this section, $p$ is fixed.
We first count roots with a given number of generators, and then sum over the integer values of that parameter.

\begin{proof}[\rm\textbf{Proof of Theorems~\ref{thm:sharp} and \ref{thm:generator}}]
We first prove the counting and distribution estimates for $R_p(m)$.
Proposition~\ref{prop:nonroot-count} will then give the estimates for $Q_p(m)$ and $P_p(m)$.

Use the root counts $B_p(d,s)$ and complete-parameter notation of Subsection~\ref{subsec:parameters}, so that $R_p(m)=\sum_{d=0}^m B_p(d,m-d)$, and write $D=D_d$.
For $d,s\geq1$ and $s\leq D$, Lemma~\ref{lem:root-orbit-identity} gives \eqref{eq:sharp-orbit-identity} with $0\leq\zeta\leq\eta_{p,d,s}$.
For any fixed $0<r\leq R$, the tensor estimate in Corollary~\ref{cor:parameter-errors} gives constants $c>0,d_0$, depending only on $r,R$, such that
\[
 0\leq\Delta_{p,d,s}\leq p^{-cd^2}\quad
 (d\geq d_0,\ rd\leq s\leq Rd).
\]

Use $a_{p,n}$ and $T_n\coloneqq T_{p,n}$ from \eqref{eq:product-bounds}.
Equation~\eqref{eq:sharp-orbit-identity}, with $E=sD-d^2-s^2$, yields
\[
 \frac{B_p(d,s)}{g_p^{-2}p^E}=(\sigma_{p,d,s}+\zeta)T_dT_s.
\]
Thus the two general linear group orders account for the factor $g_p^{-2}$.
Since
\[
 1-\sigma_{p,d,s}\leq\frac{p^{s-D}}{p-1},\quad
 1-T_dT_s\leq\frac{p^{-d}+p^{-s}}{p-1},
\]
we obtain the finite relative-error bound
\begin{equation}\label{eq:sharp-layer-error}
 \left|\frac{B_p(d,s)}{g_p^{-2}p^{sD-d^2-s^2}}-1\right|
 \leq\Delta_{p,d,s}+(p-2)p^{-ds}
      +\frac{p^{s-D}+p^{-d}+p^{-s}}{p-1}.
\end{equation}

Set
\[
 E_m(d)=\frac{(m-d)d(d+1)}2-d^2-(m-d)^2,\quad
 \mathcal J_m=\{d\in\mathbf Z:3m/5\leq d\leq3m/4\}.
\]
For $d\in\mathcal J_m$ and $s=m-d$,
\[
 \frac m4\leq s\leq\frac{2m}5,\quad d\geq\frac{3m}5,\quad
 \frac13\leq\frac sd\leq\frac23,\quad ds\geq\frac{3m^2}{20},\quad
 D-s\geq\frac{9m^2}{50}-\frac m{10}.
\]
The right side of \eqref{eq:sharp-layer-error} is therefore at most
\[
 p^{-\frac{9cm^2}{25}}+(p-2)p^{-\frac{3m^2}{20}}
 +\frac{p^{-\frac{9m^2}{50}+\frac{m}{10}}+p^{-\frac{3m}{5}}+p^{-\frac{m}{4}}}{p-1}.
\]
In particular,
\begin{equation}\label{eq:sharp-central-upper}
 B_p(d,m-d)=g_p^{-2}p^{E_m(d)}(1+O(p^{-\frac{m}{4}})),\quad
 B_p(d,m-d)\leq2g_p^{-2}p^{E_m(d)}
\end{equation}
uniformly on $\mathcal J_m$, once $m$ is sufficiently large.

We next determine the integers at which $E_m(d)$ is largest.
Expanding gives
\[
 E_m(d)=\frac{-d^3+(m-5)d^2+5md-2m^2}{2},
\]
and, writing $\Delta_m(d)=E_m(d+1)-E_m(d)$,
\[
 \Delta_m(d)=-\frac32d^2+\left(m-\frac{13}2\right)d+3m-3,
 \quad \Delta_m(d+1)-\Delta_m(d)=m-3d-8.
\]
The differences strictly decrease for $d\geq3m/5$.
The table gives the set $\mathcal T_m$ of dominant generator numbers, the maximum $e_m$ on $\mathcal J_m$, and the outer gaps $\ell_m=\Delta_m(d_--1)$, $r_m=-\Delta_m(d_+)$, where $d_-\coloneqq\min\mathcal T_m$ and $d_+\coloneqq\max\mathcal T_m$:
\begin{equation}\label{eq:sharp-saddle-table}
 \begin{array}{c|c|c|c|c}
 m&\mathcal T_m&e_m-\lambda(m)&\ell_m&r_m\\ \hline
 3a&\{2a-1\}&m/3-2&2a+4&a-2\\
 3a+1&\{2a\}&m/3-26/27&a+4&2a\\
 3a+2&\{2a,2a+1\}&2m/9-7/27&3a+6&3a+3
 \end{array}
\end{equation}
For $m\geq60$, comparison with $3m/5$ and $3m/4$ places these layers and their immediate neighbors in $\mathcal J_m$.
The unique maxima are $2a-1$, $2a$, and $2a+1$, respectively; in the last case $\Delta_m(2a)=3$ and $E_m(2a)=e_m-3$.
Set $h_m=\min\{\ell_m,r_m\}$, equal respectively to $a-2,a+4,m+1$ (using $2a\geq a+4$ in the middle case).

Let
\[
 S_m=\sum_{d\in\mathcal T_m}p^{E_m(d)},\quad
 R_m=\sum_{d\in\mathcal J_m\setminus\mathcal T_m}p^{E_m(d)}.
\]
Then $S_m=p^{e_m}$ in the first two cases and $S_m=(1+p^{-3})p^{e_m}$ in the third, so $M_p(m)=g_p^{-2}S_m$.
Monotonicity of the differences gives, for $j\geq1$ whenever the indicated point lies in $\mathcal J_m$,
\[
 \begin{aligned}
 E_m(d_--j)&\leq E_m(d_-)-j\ell_m\leq e_m-jh_m,\\
 E_m(d_++j)&\leq e_m-jr_m\leq e_m-jh_m.
 \end{aligned}
\]
Summing the two geometric tails, and using $h_m\geq m/3-2\geq m/4$ for $m\geq60$, gives
\begin{equation}\label{eq:sharp-geometric-tail}
 \frac{R_m}{S_m}\leq\frac{2p^{-h_m}}{1-p^{-h_m}}
 \leq4p^2p^{-\frac{m}{3}}.
\end{equation}
The normalized weights on $\mathcal T_m$ are exactly those in \eqref{eq:generator-law}; in particular, the two weights in the third case have ratio $p^3:1$.

Apply Lemma~\ref{lem:root-generator-tail}, including its boundary cases $d=0$, $s=0$, and $s>D$, with $\rho=1/15$.
Since the table gives $M_p(m)\geq p^{\lambda(m)}$ for $m\geq60$, the tail outside $\mathcal J_m$, divided by $M_p(m)$, is at most
\[
 p^{-\frac{m^3}{2700}+\frac{4}{9}m^2}\leq p^{-\frac{m^3}{5400}}\leq p^{-\frac{m}{3}}
\]
for $m\geq2400$, subject also to the lemma's sufficiently-large condition.
Hence
\begin{equation}\label{eq:sharp-distant-tail}
 \sum_{d\notin\mathcal J_m}B_p(d,m-d)\leq M_p(m)p^{-\frac{m}{3}}.
\end{equation}

On the principal layers in \eqref{eq:sharp-saddle-table}, let $s=m-d$.
Then $d,s\geq m/3$, and, for $m\geq60$, $d^2\geq m^2/4$, $ds\geq m^2/6$, $D-s\geq m^2/16$.
Thus \eqref{eq:sharp-layer-error}, used again at these layers, gives
\[
 \left|\frac{B_p(d,m-d)}{g_p^{-2}p^{E_m(d)}}-1\right|
 \leq p^{-\frac{cm^2}{4}}+(p-2)p^{-\frac{m^2}{6}}
       +\frac{p^{-\frac{m^2}{16}}+2p^{-\frac{m}{3}}}{p-1}
 =O(p^{-\frac{m}{3}}).
\]
Define $b_{m,d}=B_p(d,m-d)$ for $0\leq d\leq m$ and zero otherwise, and
\[
 w_{m,d}=g_p^{-2}p^{E_m(d)}\mathbf1_{\{d\in\mathcal T_m\}}.
\]
The principal-layer estimate, \eqref{eq:sharp-central-upper}, \eqref{eq:sharp-geometric-tail}, and \eqref{eq:sharp-distant-tail} together show that
\begin{equation}\label{eq:sharp-l1}
 L_m\coloneqq\sum_{d\geq0}|b_{m,d}-w_{m,d}|
 \leq K(p)M_p(m)p^{-\frac{m}{3}}.
\end{equation}
In particular, the central nonprincipal layers contribute at most $8p^2M_p(m)p^{-m/3}$; their contribution is therefore within the required error.

Since $\sum b_{m,d}=R_p(m)$, $\sum w_{m,d}=M_p(m)$, and $w_{m,d}/M_p(m)=\pi_{p,m}(d)$, the single estimate \eqref{eq:sharp-l1} proves $R_p(m)=M_p(m)(1+O(p^{-m/3}))$ and the generator-distribution estimate for roots.
Indeed,
\[
 |R_p(m)-M_p(m)|\leq L_m,
\]
and, writing $A=R_p(m)$ and $M=M_p(m)$,
\[
 \sum_d\left|\frac{b_{m,d}}A-\frac{w_{m,d}}M\right|
 \leq\left|1-\frac AM\right|+\frac{L_m}M\leq\frac{2L_m}M.
\]
Thus the root total variation error is at most $L_m/M$.

Finally Proposition~\ref{prop:nonroot-count} gives, for each fixed $p$,
\[
P_p(m)-R_p(m)\leq p^{\lambda(m)-\frac{m^2}{300}}.
\]
Every root has $G'\leq\Phi(G)\leq Z(G)$, so $R_p\leq Q_p\leq P_p$.
For $Y\in\{P_p,Q_p\}$,
\[
 0\leq\frac{Y(m)-R_p(m)}{M_p(m)}\leq p^{-\frac{m^2}{300}},
\]
which proves $Y(m)=M_p(m)(1+O(p^{-m/3}))$.
The uniform measure on the isomorphism classes counted by $Y(m)$, conditional on the chosen group being a root, is uniform on the root isomorphism classes of order $p^m$.
Consequently
\[
 d_{\mathrm{TV}}(\mu_{Y,p,m},\mu_{R,p,m})
 \leq\frac{Y(m)-R_p(m)}{Y(m)}
 \leq2p^{-\frac{m^2}{300}},
\]
using $R_p(m)\geq M_p(m)/2$ for large $m$.
The triangle inequality, combined with the estimate for $\mu_{R,p,m}$, proves the generator-distribution estimate for $Y\in\{P_p,Q_p\}$.
\end{proof}

\subsection{A logarithmic consequence}\label{subsec:logarithmic-count}

\begin{corollary}\label{cor:logarithmic-count}
For every fixed prime $p$ and $X\in\{P_p,Q_p,R_p\}$,
\[
\log_p X(m)=\lambda(m)+O(m).
\]
Moreover,
\[
\limsup_{m\to\infty}\frac{\log_pX(m)-\lambda(m)}m=\frac13,
\quad
\liminf_{m\to\infty}\frac{\log_pX(m)-\lambda(m)}m=\frac29,
\]
and
\[
\log_p R_p(m)=\log_p Q_p(m)+O(p^{-\frac{m^2}{300}})
            =\log_p P_p(m)+O(p^{-\frac{m^2}{300}}).
\]
\end{corollary}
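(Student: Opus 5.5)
The plan is to read everything off Theorem~\ref{thm:sharp} and the explicit main term \eqref{eq:sharp-main-term}, together with Proposition~\ref{prop:nonroot-count} and the inequalities $R_p\le Q_p\le P_p$. The proof of Theorems~\ref{thm:sharp} and~\ref{thm:generator} established $X(m)=M_p(m)(1+O(p^{-m/3}))$ for every $X\in\{P_p,Q_p,R_p\}$. Taking base-$p$ logarithms gives
\[
\log_pX(m)=\log_pM_p(m)+O(p^{-m/3}).
\]
From \eqref{eq:sharp-main-term}, $\log_pM_p(m)-\lambda(m)$ equals $c_1m+c_0-2\log_pg_p$. The pair $(c_1,c_0)$ depends only on $m\bmod 3$: it is $(1/3,-2)$, $(1/3,-26/27)$, or $(2/9,-7/27+\log_p(1+p^{-3}))$. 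Since $g_p\in(1/4,1)$ by \eqref{eq:product-bounds}, the constant part is bounded. This gives $\log_pX(m)=\lambda(m)+O(m)$.

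For the limsup and liminf, I divide $\log_pX(m)-\lambda(m)=c_1m+O(1)$ by $m$. Along the residue classes $m\equiv0,1\pmod 3$ the ratio tends to $1/3$. Along $m\equiv2\pmod3$ it tends to $2/9$. Every subsequence eventually lies in these classes, so the limit points are exactly $\{2/9,1/3\}$. This gives $\limsup=1/3$ and $\liminf=2/9$.

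For the last assertion, Proposition~\ref{prop:nonroot-count} and Lemma~\ref{lem:root-lower-bound} give
\[
0\le \frac{P_p(m)}{R_p(m)}-1\le p^{\lambda(m)-m^2/300}/p^{\lambda(m)}=p^{-m^2/300}
\]
for large $m$. This is exactly the final inequality of Theorem~\ref{thm:structure}. Since $R_p\le Q_p\le P_p$, the ratio $Q_p/R_p$ also lies in $[1,1+p^{-m^2/300}]$. Using $0\le\log_p(1+x)\le x/\ln p$ then gives $\log_pY(m)-\log_pR_p(m)=O(p^{-m^2/300})$ for $Y\in\{P_p,Q_p\}$.

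No step is a real obstacle. The only care needed is to confirm that the constant $-2\log_pg_p$ and the residue-dependent constants are uniformly bounded, which holds because $g_p>1/4$ and there are only three residue cases.
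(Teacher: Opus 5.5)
Your proposal is correct and follows essentially the same route as the paper. It reads off $\log_pX(m)-\lambda(m)$ residue class by residue class from the main term $M_p(m)$ in \eqref{eq:sharp-main-term}. It then gets the final comparison from the ratio bound of Theorem~\ref{thm:structure}, using $\log_p(1+x)\leq x/\ln p$.
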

\begin{proof}
By Theorems~\ref{thm:sharp} and \ref{thm:structure}, the formula \eqref{eq:sharp-main-term} gives
\[
\frac{\log_p X(m)-\lambda(m)}m=
\begin{cases}
1/3+O(m^{-1})&\text{if }m\equiv0,1\pmod3\\
2/9+O(m^{-1})&\text{if }m\equiv2\pmod3.
\end{cases}
\]
The residue-class expansion proves the logarithmic asymptotic and the stated limsup and liminf.
For $R_p(m)\leq x\leq y\leq P_p(m)$, Theorem~\ref{thm:structure} gives
\[
0\leq\log_p y-\log_p x
\leq\log_p(1+p^{-\frac{m^2}{300}})
\leq\frac{p^{-\frac{m^2}{300}}}{\ln p}.
\]
This proves the final assertion.
\end{proof}

\section{Automorphism moments and finite-group enumeration}\label{sec:cumulative-enumeration}

We now prove Theorem~\ref{thm:main}.
The first step is to estimate fixed moments of automorphism-group orders for $2$-groups.
The automorphism-moment estimates lead to bounds for the number of groups of order $2^a q$ with a fixed odd cofactor $q>1$.
We combine these fixed-cofactor bounds with estimates uniform over all group orders to sum over a dyadic interval.

All logarithms in this section have base two unless another base is specified.
The moment order $k$ and the odd cofactor $q$ are fixed before the order exponent tends to infinity.
Let $\Omega(q)$ denote the number of prime factors of $q$, counted with multiplicity.

Specialize the complete parameter space of Subsection~\ref{subsec:parameters} to $p=2$, writing $Y(d,s)=\mathscr P_{2,d,s}$ and $Y(d,s)_{\mathrm{adm}}=\mathscr P_{2,d,s}^*$.
For a root $Q\in\mathcal R_2(d,s)$, let $V=Q/\Phi(Q)$, $W=\Phi(Q)$, and $S_Q=\operatorname{Stab}_{\GL(V)\times\GL(W)}(\beta,\pi)$.
Lemma~\ref{lem:parameter-counts} gives $|Y(d,s)|=2^{sD_d}$, and Proposition~\ref{prop:root-parameters} identifies the orbits in $Y(d,s)_{\mathrm{adm}}$ with the isomorphism classes in $\mathcal R_2(d,s)$.
The exact sequence in Lemma~\ref{lem:root-automorphisms} gives
\begin{equation}\label{eq:root-automorphism-order}
 |\Aut(Q)|=2^{ds}|S_Q|.
\end{equation}
The projection $S_Q\to\GL(V)$ is injective: an element fixing $V$ fixes every commutator and power value, and these jointly span $W$.
When $s=0$, these statements mean that $Y(d,0)$ is a singleton, $Q\cong(\F_2^d,+)$, and $S_Q=\Aut(Q)=\GL(d,2)$; for $s>D_d$ there are no admissible parameters.
A uniform complete parameter has uniform commutator marginal, since each quadratic-refinement fiber has $2^{ds}$ elements.
We may therefore bound a sum of nonnegative quantities over admissible parameters by the corresponding sum over the complete space.

\subsection{Moments of root stabilizers}\label{subsec:stabilizer-moments}

\begin{lemma}\label{lem:root-stabilizer-moments}
 Fix an integer $h\geq1$ and $0<\varepsilon\leq b$.
 For $X=\Hom(\Lambda^2\F_2^d,\F_2^s)$ and $\Gamma=\GL(d,2)\times\GL(s,2)$, uniformly for $\varepsilon d\leq s\leq bd$,
 \[
\mathbf E_{x\in X}|\Gamma_x|^h=1+O(2^{-c_hd^2})
\]
 for some $c_h>0$.
 The same assertion holds for the complete parameter space $Y(d,s)$.
 The constants and dimension threshold may depend on $h,\varepsilon,b$.
\end{lemma}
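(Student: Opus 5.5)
The plan is to apply Lemma~\ref{lem:fixed-space-moments} with $q=2$, the linear action of $\Gamma$ on $X\cong\Lambda^2(\F_2^d)^*\otimes\F_2^s$, and the partition into near-scalar and far pairs supplied by Lemma~\ref{lem:tensor-rank}. As in Corollary~\ref{cor:parameter-errors}, the isomorphism $P\mapsto P^{-T}$ identifies the action on $\Hom(\Lambda^2V,W)$ with the tensor action and preserves stabilizer orders. At $q=2$ the kernel is $Z=\{(I,I)\}$, so $|Z|^h=1$, which gives the lower bound. For the upper bound we apply Lemma~\ref{lem:tensor-rank} with $a=\varepsilon$, the given $b$, and $T=h(1+b^2)+1$. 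This yields $K$ and a threshold $n_0$ with the following properties: every pair outside $B_K$ has codimension of fixed space $\eta\geq Tn^2$, every pair in $B_K\setminus Z$ has $\eta\geq\kappa_0 d^2$, and $|B_K|\leq 2^{2K(d+s)}$. Lemma~\ref{lem:fixed-space-moments} then gives
\[
\mathbf E|\Gamma_x|^h\leq 1+|\Gamma|^h2^{-Td^2}+|B_K|^h2^{-\kappa_0d^2}.
\]
Since $|\Gamma|\leq 2^{(1+b^2)d^2}$, the middle term is at most $2^{-d^2}$. The last term is at most $2^{2hK(1+b)d-\kappa_0d^2}$, which is $\leq 2^{-\kappa_0d^2/2}$ for large $d$. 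So $c_h=\kappa_0/4$ works, and the constants depend only on $h,\varepsilon,b$.

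For the complete parameter space $Y(d,s)$, the acting group is the same $\Gamma$, and the action on pairs $(\beta,\pi)$ is affine rather than linear when $p=2$. I therefore do not apply Lemma~\ref{lem:fixed-space-moments} directly. Instead I expand the moment:
\[
\mathbf E_{y}|\Gamma_y|^h=\sum_{g_1,\dots,g_h}\Pr_y(g_iy=y\ \forall i).
\]
The event $g_iy=y$ for all $i$ forces $g_i\beta=\beta$ for all $i$. The commutator marginal is uniform because each fiber of quadratic refinements has size $2^{ds}$. Hence each summand is at most $\Pr_\beta(g_i\beta=\beta\ \forall i)$, and so $\mathbf E_Y|\Gamma_y|^h\leq\mathbf E_X|\Gamma_x|^h$. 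The lower bound $1$ again comes from the identity tuple. This marginal comparison is the same one used in Lemma~\ref{lem:parameter-counts}.

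The main point needing care is the first case. Lemma~\ref{lem:tensor-rank} is stated with $n$ the source dimension and $an\leq s\leq bn$, and we apply it with $n=d$. We must check that $\Gamma$ modulo its kernel acts on $X$ with $Z$ exactly $\{(I,I)\}$ at $q=2$; this is noted after Lemma~\ref{lem:tensor-rank}. We must also check that the bound for $B_K\setminus Z$ in part~\ref{item:near-scalar-rank} is uniform in $s$ across the whole range $[\varepsilon d,bd]$. Both facts are supplied by that lemma, so the remaining work is bookkeeping of exponents.
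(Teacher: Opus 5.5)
Your proposal is correct and follows the paper's proof essentially step for step. It uses the same choice $T=h(1+b^2)+1$ in Lemma~\ref{lem:tensor-rank}, the same three-term bound from Lemma~\ref{lem:fixed-space-moments}, the transfer via $P\mapsto P^{-T}$, and the same reduction for $Y(d,s)$ via $\Gamma_{(\beta,\pi)}\leq\Gamma_\beta$ together with the uniform commutator marginal. One small remark, which does not affect your argument: the pairs in $Y(d,s)$ form the vector space of quadratic maps $V\to W$, so the action there is actually linear rather than affine.
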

\begin{proof}
First use $X_0=\Lambda^2V\otimes W$, where $V=\F_2^d$ and $W=\F_2^s$.
Set $M=1+b^2$ and apply Lemma~\ref{lem:tensor-rank} with $a=\varepsilon$, $n=d$, $q=2$, and $T=hM+1$.
With its $K,B_K$ and $\kappa_0=\min\{\varepsilon/8,1/16\}$, Lemma~\ref{lem:fixed-space-moments} applies with kernel $Z=\{(I,I)\}$, $L=Td^2$ and $H=\kappa_0d^2$.
Since $|\Gamma|\leq2^{Md^2}$ and $|B_K|\leq2^{2K(d+s)}$, it gives the explicit bound
\[
1\leq\mathbf E_{x\in X_0}|\Gamma_x|^h
 \leq1+2^{-d^2}+2^{2hK(1+b)d-\kappa_0d^2}.
\]
For fixed $h,\varepsilon,b$, this proves the assertion, for example with $c_h=\kappa_0/2$ after increasing the common dimension threshold.

Now $X\cong\Lambda^2(V^*)\otimes W$.
Reindexing all source transformations by the automorphism $P\mapsto P^{-T}$ of $\GL(d,2)$ identifies its fixed-space sum with that for $X_0$.
Finally $\Gamma_{(\beta,\pi)}\leq\Gamma_\beta$, and the $2^{ds}$ quadratic refinements of each $\beta$ give
\[
 1\leq\mathbf E_{y\in Y(d,s)}|\Gamma_y|^h
 \leq\mathbf E_{\beta\in X}|\Gamma_\beta|^h.\qedhere
\]
\end{proof}

\begin{lemma}\label{lem:common-form-stabilizers}
 Choose $s$ forms independently and uniformly from $\Hom(\Lambda^2\F_2^d,\F_2)$, and let $Z_s$ be the order of their common stabilizer in $\GL(d,2)$.
 For fixed $h\geq1$, if $s\geq12h$ and $d$ is sufficiently large, then
 \[
\mathbf E Z_s^h\leq2.
\]
\end{lemma}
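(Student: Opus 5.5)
Here $s$ is fixed, at least $12h$, and $d\to\infty$, so Proposition~\ref{prop:tensor-stabilizers} and Lemma~\ref{lem:tensor-rank} do not apply: they need $s\geq ad$. I will instead use the moment identity of Lemma~\ref{lem:fixed-space-moments} directly, for $\Gamma=\GL(d,2)$ acting on $X=\Hom(\Lambda^2\F_2^d,\F_2)^s$. The stabilizer of an $s$-tuple of forms is the common stabilizer, so $Z_s=|\Gamma_x|$ for uniform $x\in X$. The fixed space of $g$ in $X$ is the $s$-fold power of its fixed space on alternating forms. Writing $\eta(g)$ for the codimension of the fixed space of $g$ on $\Lambda^2(\F_2^d)^*$, we get
\[
\mathbf E Z_s^h=\sum_{g_1,\dots,g_h}2^{-s\operatorname{codim}(\bigcap_i\operatorname{Fix}g_i)}\leq\sum_{g_1,\dots,g_h}2^{-s\max_i\eta(g_i)}.
\]
The tuple with all $g_i=I$ contributes $1$. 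Any other tuple has some $g_i\neq I$, so the remaining sum is at most
\[
h\,|\Gamma|^{h-1}\sum_{g\neq I}2^{-s\eta(g)}\leq h\sum_{g\ne I}|\Gamma|^{h-1}2^{-s\eta(g)}.
\]
This bound looks too lossy, because $|\Gamma|^{h-1}\approx 2^{(h-1)d^2}$. I will therefore bound the tuple sum more carefully: use $\max_i\eta(g_i)\geq\frac1h\sum_i\eta(g_i)$. The sum then factorizes:
\[
\mathbf E Z_s^h-1\leq\Bigl(\sum_{g}2^{-(s/h)\eta(g)}\Bigr)^h-1.
\]
It therefore suffices to show $\sum_{g\ne I}2^{-(s/h)\eta(g)}\to0$ when $s/h\geq12$. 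Then $(1+o(1))^h\leq2$ for large $d$.

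The key input is a lower bound of $\eta(g)$ in terms of $k=\rank(g-I)$. The number of $g$ with $\rank(g-I)=k$ is at most $2^{2dk}$, since $g-I$ factors through a $k$-space. So I need roughly $\eta(g)\geq dk/6$, which gives terms $2^{2dk-12\cdot dk/6}\cdot 2^{-\Omega(d)}$. Made precise, I will aim for $\eta(g)\geq (d-k)k/2-O(k)$ when $k\leq d/2$, and $\eta(g)\geq cd^2$ for large $k$. The proof adapts the case analysis of Lemma~\ref{lem:tensor-rank}, now with trivial target, to $\Lambda^2$ of the dual action.

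A cleaner route to this bound goes through the dual. A form $\omega$ is fixed by $g$ exactly when $\omega(gu,gv)=\omega(u,v)$. This gives $\eta(g)=\dim\im(\Lambda^2g-I)$ on $\Lambda^2V$. Choose $E=\ker(g-I)$ of dimension $d-k$ and a complement spanned by $v_1,\dots,v_k$ with $y_j=(g-I)v_j$ independent. Then $(\Lambda^2g-I)(u\wedge v_j)=u\wedge y_j$ for $u\in E$. These images span $E\wedge\langle y_1,\dots,y_k\rangle$, which has dimension at least $(d-k)k-\binom{k}{2}$. This gives $\eta(g)\geq k(d-k)-k^2/2$.

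For $k\leq d/3$, this is at least $kd/2$, so each $k$ contributes at most $2^{2dk-6dk}=2^{-4dk}$. For $k>d/3$ I apply the same argument with $k'=\lfloor d/3\rfloor$ independent vectors; this gives $\eta\geq d^2/6-O(d)$. The count of such $g$ is then at most $|\Gamma|\leq2^{d^2}$, against a weight $2^{-12(d^2/6)}=2^{-2d^2}$. The main obstacle is exactly this large-$k$ regime, together with making sure the constant $12$ suffices with room for the polynomial factors. The analysis above suggests it does, because $s\eta/h\geq 2d^2-O(d)$ beats $|\Gamma|$. Summing over $k\ge1$ then gives $o(1)$, which finishes the proof.
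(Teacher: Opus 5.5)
Your framework is the paper's. Both arguments use the fixed-space moment identity and the averaging bound $\operatorname{codim}\bigcap_i\operatorname{Fix}(g_i)\geq\frac1h\sum_i\eta(g_i)$, which factorizes the sum into $\bigl(\sum_g2^{-(s/h)\eta(g)}\bigr)^h$. Both also use the count $2^{2dk}$ for $\rank(g-I)=k$, and the bound $\eta(g)\geq\dim\bigl(\ker(g-I)\wedge\im(g-I)\bigr)$ for small $k$.

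There is one slip in that last bound. The dimension is $(d-k)k-j(j+1)/2$ with $j=\dim(\ker(g-I)\cap\im(g-I))$, so the bound is $(d-k)k-\binom{k+1}{2}$, not $(d-k)k-\binom k2$. For a transvection with $d=2$ the wedge space is zero. The extra $-k/2$ is harmless, and the argument works for all $k\leq d/2$, which is how the paper uses it.

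The genuine gap is the regime where $\ker(g-I)$ is small. You claim that for $k>d/3$ the same argument with $k'=\lfloor d/3\rfloor$ vectors gives $\eta\geq d^2/6-O(d)$. That number is $k'(d-k')-k'^2/2$, so it silently takes $\dim E=d-k'$. But $E=\ker(g-I)$ has dimension $d-k$, so the argument only gives
\[
\dim\bigl(E\wedge\langle y_1,\dots,y_{k'}\rangle\bigr)\leq(d-k)k'.
\]
Optimizing over $k'$ yields about $(d-k)^2/2$. This falls below $d^2/12$, the threshold needed to beat $|\GL(d,2)|\leq2^{d^2}$ against the weight $2^{-12\eta}$, once $k\gtrsim(1-6^{-1/2})d$. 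It is zero for any $g$ with no nonzero fixed vector, such as a Singer cycle. So you need a lower bound on $\rank(\Lambda^2g-I)$ that does not come from fixed vectors of $g$.

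The paper supplies this for $\rank(g-I)>d/2$ by working over $\overline{\F}_2$:
\begin{enumerate}
\item Every Jordan column length $c_{\alpha,t}$ is at most $d/2$. Eigenvalue $1$ has geometric multiplicity less than $d/2$, and any other eigenvalue lies in a Frobenius orbit of length at least two with identical Jordan structure.
\item Hence $\dim C_{\operatorname{End}(V)}(g)=\sum c_{\alpha,t}^2\leq d^2/2$.
\item Counting invariant pairings between Jordan blocks bounds the $g$-invariant alternating forms by $(\dim C_{\operatorname{End}(V)}(g)+d)/2\leq d^2/4+d/2$. Equivalently, $\eta(g)\geq d^2/4-d$.
\item These $g$ therefore contribute at most $2^{d^2-12(d^2/4-d)}=2^{-2d^2+12d}$.
\end{enumerate}
With this step inserted, or any bound $\eta(g)\geq(1/12+c)d^2$ for $k>d/2$, your proof goes through. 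As written, the large-$k$ case is unproved.
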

\begin{proof}
Write $\rho(g)=\rank(\Lambda^2g-I)$ and $r=\rank(g-I)$.
The fixed-space codimension on alternating forms is also $\rho(g)$.
If $1\leq r\leq d/2$, let $U=\ker(g-I)$ and $B=\im(g-I)$.
For $u\in U$,
\[
 (\Lambda^2g-I)(u\wedge v)=u\wedge(g-I)v,
\]
so its image contains $U\wedge B$.
If $j=\dim(U\cap B)$, then
\[
\rho(g)\geq\dim(U\wedge B)
 =(d-r)r-\frac{j(j+1)}2
 \geq r(d-r)-\frac{r(r+1)}2\geq r(d/4-1/2).
\]
For the dimension identity, write $J=U\cap B$, $U=J\oplus U_0$, and $B=J\oplus B_0$; the four summands $\Lambda^2J$, $J\wedge B_0$, $U_0\wedge J$, and $U_0\wedge B_0$ form a direct sum.
There are at most $2^{2dr}$ matrices of rank $r$ of the form $g-I$, since each factors as a $d\times r$ matrix times an $r\times d$ matrix.

For $r>d/2$, let $c_{\alpha,t}$ be the Jordan column lengths over an algebraic closure.
They sum to $d$ and are at most $d/2$: at $1$ the geometric multiplicity is $d-r<d/2$; other eigenvalues have Frobenius orbits of length at least two with identical blocks.
For $V_i=J_{\ell_i}(\alpha_i)$, invariant pairings $V_i\times V_j$ are homomorphisms to the dual block $J_{\ell_j}(\alpha_j^{-1})$, of dimension $h_{ij}=\mathbf1_{\alpha_i\alpha_j=1}\min(\ell_i,\ell_j)$.
The same block calculation and Cauchy--Schwarz give
\[
\begin{aligned}
 C\coloneqq\dim C_{\operatorname{End}(V)}(g)
   &=\sum_{\alpha,t}c_{\alpha,t}^2\leq d^2/2,\\
 H\coloneqq\sum_{i,j}h_{ij}
   &=\sum_{\alpha,t}c_{\alpha,t}c_{\alpha^{-1},t}\leq C.
 \end{aligned}
\]
Here $H$ is the full invariant bilinear-form dimension.
Alternating forms choose each off-diagonal pair only once, with diagonal freedom at most $h_{ii}\leq\ell_i$, whence
\[
 \dim\operatorname{Fix}_{\mathrm{Alt}}(g)
 \leq\sum_{i<j}h_{ij}+\sum_i h_{ii}
 =\frac{H+\sum_i h_{ii}}2\leq\frac{C+d}{2}.
\]
Scalar extension preserves ranks, so
\[
\rho(g)\geq\binom d2-(d^2/4+d/2)=d^2/4-d.
\]
Combining the contribution from $g=I$ with the bounds for the two ranges $1\leq r\leq d/2$ and $r>d/2$ gives, for sufficiently large $d$,
\begin{equation}\label{eq:exterior-rank-sum}
 \sum_{g\in\GL(d,2)}2^{-12\rho(g)}
 \leq1+\sum_{1\leq r\leq d/2}2^{(-d+6)r}+2^{-2d^2+12d}
 \leq1+2^{-\frac{d}{2}}.
\end{equation}
The common fixed-space codimension is at least the average of the individual codimensions.
Independence of the $s$ sampled forms, followed by \eqref{eq:exterior-rank-sum}, therefore yields
\[
 \mathbf E Z_s^h
 =\sum_{g_1,\ldots,g_h}
 2^{-s\operatorname{codim}(\bigcap_i\operatorname{Fix}(g_i))}
 \leq\prod_{i=1}^h\left(\sum_g2^{-\frac{s}{h}\rho(g)}\right)
 \leq(1+2^{-\frac{d}{2}})^h\leq2.\qedhere
\]
\end{proof}

\begin{lemma}\label{lem:stabilizer-weight-sum}
 Let $B(d,s)=sD_d-d^2-s^2$ and $U_l(d,s)=\sum_{Q\in\mathcal R_2(d,s)}|S_Q|^l$.
 For each fixed integer $l\geq1$, there are integers $S_l,d_l$ such that
 \[
U_l(d,s)\leq2^{B(d,s)+d^2}
 \quad(d\geq d_l,\ S_l\leq s\leq0.51d).
\]
 One may take $S_l=12(l+1)$.
\end{lemma}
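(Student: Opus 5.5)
Convert the sum over isomorphism classes into a sum over complete parameters, and then bound stabilizer moments by separating the $\GL(V)$ and $\GL(W)$ parts.

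\textbf{Step 1: orbit--stabilizer.} Each class $Q\in\mathcal R_2(d,s)$ is a $\Gamma$-orbit $O_Q$ in $Y(d,s)_{\mathrm{adm}}$, and every $y\in O_Q$ has stabilizer of order $|S_Q|$. Hence
\[
U_l(d,s)=\frac1{|\Gamma|}\sum_{y\in Y(d,s)_{\mathrm{adm}}}|\Gamma_y|^{l+1}
\leq\frac{2^{sD_d}}{|\Gamma|}\,\mathbf E_{y\in Y(d,s)}|\Gamma_y|^{l+1}.
\]
Since $|\Gamma|\geq2^{d^2+s^2}/16$, we get $U_l\leq 16\cdot2^{B(d,s)}\,\mathbf E|\Gamma_y|^{l+1}$. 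The lemma therefore follows once the $(l+1)$-st moment is at most $2^{d^2-4}$.

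\textbf{Step 2: reduction to common stabilizers of forms.} Because $S_Q\to\GL(V)$ is injective on admissible parameters, I will avoid the $\GL(W)$ factor. I will bound $|\Gamma_y|$ by the order of the image of $\Gamma_y$ in $\GL(V)$. That image lies in the subgroup of $P$ for which some $Q$ satisfies $Q\beta=\beta\Lambda^2P$.

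The direct route of fixing all $s$ coordinate forms fails, because $Q$ may permute them. I handle this with a union bound over $Q\in\GL(W)$. For each $(P,Q)$, the probability that $\beta$ is fixed is $2^{-\operatorname{codim}\operatorname{Fix}(P,Q)}$. Lemma~\ref{lem:fixed-space-moments} with $h=l+1$ then gives
\[
\mathbf E|\Gamma_\beta|^{l+1}
=\sum_{(g_1,\dots,g_{l+1})}2^{-\operatorname{codim}\bigcap\operatorname{Fix}(g_i)}.
\]
The total sum over $Q$-tuples contributes at most $|\GL(s,2)|^{l+1}\leq2^{(l+1)s^2}$, which is useless once $s$ is linear in $d$.

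\textbf{Step 3: the crude bound that suffices.} Since only $2^{d^2}$ slack is needed, a cheap estimate is enough. I use the trivial bound $|\Gamma_y|\leq|\GL(V)|\cdot|\{Q\}|$, where the second factor counts compatible $Q$ for a given $P$. Restricting $\beta$ to a subspace of $W$ spanned by $S_l=12(l+1)$ coordinate forms fixes those forms' span only up to $\GL(W)$-action, so I instead use the injectivity argument directly. Fix the first $S_l$ output coordinates of $\beta$; these are independent uniform forms. An element $(P,Q)$ fixing $\beta$ sends their span into $\im\beta^*$, which spoils independence.

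My cleaner plan is therefore to split on $s$. For $s$ linear in $d$, say $s\geq\varepsilon d$, Lemma~\ref{lem:root-stabilizer-moments} with $h=l+1$ and $b=0.51$ gives moment $1+o(1)$, and we are done. For $S_l\leq s<\varepsilon d$, I bound $|\Gamma_y|\leq|\GL(s,2)|\cdot Z$. Here $Z$ is the stabilizer in $\GL(d,2)$ of the tuple of coordinate forms after the $Q$-twist, which is a common stabilizer of $s$ forms up to $\GL(s)$-relabeling. Summing over $Q$ costs $2^{(l+1)s^2}\leq2^{(l+1)\varepsilon^2d^2}$. Lemma~\ref{lem:common-form-stabilizers} gives $\mathbf E Z^{l+1}\leq2$ when $s\geq12(l+1)$, since for fixed $Q$ the twisted forms are again independent uniform. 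Choosing $\varepsilon$ with $(l+1)\varepsilon^2<1/2$ leaves the total below $2^{d^2-4}$.

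The main obstacle is making the $Q$-twist rigorous in Step 3. Concretely, $\Pr(Q\beta=\beta\Lambda^2P)$ must equal the probability that $s$ independent forms are each fixed by $P$ after a linear recombination. This is true because $\beta\mapsto Q\beta$ is a bijection, so the event has probability $2^{-s\rho(P)}$ in the rank sense of Lemma~\ref{lem:common-form-stabilizers}. The higher moment follows through the same tuple expansion.
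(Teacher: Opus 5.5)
Your Step~3 plan has the same architecture as the paper's proof. You pass by orbit--stabilizer to $|\Gamma|^{-1}\sum_{y\in Y(d,s)}|\Gamma_y|^{l+1}$, use Lemma~\ref{lem:root-stabilizer-moments} for $\varepsilon d\le s\le 0.51d$, and for $S_l\le s<\varepsilon d$ use a bound $|\Gamma_y|\le|\GL(s,2)|\,Z$ together with Lemma~\ref{lem:common-form-stabilizers}, with $(l+1)\varepsilon^2<1/2$ (the paper takes $\varepsilon=1/(2(l+1))$).

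The gap is in how you justify the small-$s$ bound. Your ``$Q$-twist'' argument claims that, for each fixed $Q$, the event $Q\beta=\beta\Lambda^2P$ has probability $2^{-s\rho(P)}$ because $\beta\mapsto Q\beta$ is a bijection. This is false: that substitution does not turn the equation into $\beta=\beta\Lambda^2P$. For $P=I\ne Q$ the probability is $2^{-\binom d2\rank(Q-I)}$ rather than $1$. It can also be larger than you claim. Suppose $3$ divides $d$ and $s$, and let $P$ be multiplication by a generator $\alpha$ of $\F_8^\times$ on $\F_8^{d/3}$. Then $\Lambda^2P$ has no eigenvalue $1$, so $s\rho(P)=s\binom d2$. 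Yet for $Q$ equal to multiplication by $\alpha^3$ on $\F_8^{s/3}$, the solutions of $Q\beta=\beta\Lambda^2P$ form a space of dimension $sd^2/9$. For $Q\ne I$ the set $\{P:Q\beta=\beta\Lambda^2P\}$ is not a stabilizer at all. So Lemma~\ref{lem:common-form-stabilizers} cannot be applied to ``twisted forms'' as you propose, and the termwise probability comparison fails.

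The missing observation is the one you discarded in Step~2. Fixing all $s$ coordinate forms does work, because you only need it for the kernel of the projection $\Gamma_\beta\to\GL(W)$. That kernel consists of the pairs $(P,I)$ with $P$ fixing every coordinate form of $\beta$. Its order is therefore $Z_s(\beta)$, the common stabilizer of the untwisted forms, exactly as in Lemma~\ref{lem:common-form-stabilizers}. The image of the projection has order at most $|\GL(s,2)|$. Equivalently, each fiber $\{P:(P,Q)\in\Gamma_\beta\}$ is empty or a coset of that kernel.

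This gives, pointwise, $|\Gamma_y|\le|\Gamma_\beta|\le 2^{s^2}Z_s(\beta)$. The coordinate forms of a uniform $y\in Y(d,s)$ are independent and uniform, so $\mathbf E|\Gamma_y|^{l+1}\le 2^{(l+1)s^2}\mathbf E Z_s^{l+1}\le 2^{(l+1)s^2+1}$, with no sum over $Q$ and no twisted probabilities. With this replacement the rest of your arithmetic goes through and coincides with the paper's proof.
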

\begin{proof}
Set $h=l+1$.
Orbit--stabilizer on the admissible space gives
\begin{equation}\label{eq:weighted-orbit-identity}
 U_l(d,s)=\frac1{|\Gamma|}
 \sum_{y\in Y(d,s)_{\mathrm{adm}}}|S_y|^{l+1}
 \leq\frac1{|\Gamma|}\sum_{y\in Y(d,s)}|S_y|^{l+1}.
\end{equation}
Project $S_y$ to $\GL(s,2)$.
Its kernel is contained in the common source stabilizer of the $s$ coordinate alternating forms of $\beta$; it must also preserve $\pi$.
Thus $|S_y|\leq2^{s^2}Z_s$.
Under uniform sampling from $Y(d,s)$ these coordinate forms are independent and uniform; independence here is probabilistic and does not assert linear independence.
By Lemma~\ref{lem:common-form-stabilizers} and $|\GL(n,2)|\geq2^{n^2}/4$,
\begin{equation}\label{eq:form-moment-bound}
 U_l(d,s)\leq2^{B(d,s)+(l+1)s^2+5}
 \quad(s\geq12(l+1))
\end{equation}
for a dimension threshold depending only on $l$.
The constant $5$ accounts for the two general linear group denominators and the factor $2$ in the form moment.

Take $\varepsilon_l=1/(2(l+1))$.
If $s<\varepsilon_l d$, then $(l+1)s^2\leq d^2/2$, so \eqref{eq:form-moment-bound} proves the result for $d\geq4$.
If $\varepsilon_l d\leq s\leq0.51d$, Lemma~\ref{lem:root-stabilizer-moments} makes the complete-parameter $h$th moment at most $2$, uniformly after increasing the threshold.
Equation \eqref{eq:weighted-orbit-identity} then gives $U_l(d,s)\leq2^{B(d,s)+5}$.
Taking $d_l$ to be the maximum of $4$ and these two thresholds proves the assertion uniformly over the entire stated range.
\end{proof}
\subsection{Weighted counts of descendants}\label{subsec:weighted-descendants}

Let $\nu(Q)$ be the nucleus dimension and let $R=R_\beta$ be the maximum rank in the Jacobi dual, as defined in Subsection~\ref{subsec:nucleus}.
We use the nucleus bounds from Subsection~\ref{subsec:nucleus} together with the root stratum estimates \eqref{eq:root-rank-bounds} and the individual weighted margins in Proposition~\ref{prop:weighted-roots}, before summing over $R$.
The parameter range, $z=s+t$, and $\mathcal F=zD_d-d^2-z^2$ are as in \eqref{eq:root-weight-strip}.

\begin{lemma}\label{lem:weighted-roots-with-stabilizers}
 Fix $k\geq1$.
 There is $S(k)$ such that, in \eqref{eq:root-weight-strip}, for $s\geq S(k)$ and sufficiently large $d$,
 \[
\sum_{\substack{Q\in\mathcal R_2(d,s)\\\nu(Q)>0}}
 |S_Q|^k2^{t(D_d-s)+t\nu(Q)}
 \leq2^{zD_d-d^2-z^2-\frac{d^2}{25}}.
\]
\end{lemma}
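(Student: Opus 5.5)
The plan is to rerun the proof of Proposition~\ref{prop:weighted-roots} at $p=2$, with the extra weight $|S_Q|^k$ carried along in every rank stratum. Hölder's inequality will separate this weight from the rank-stratum counts. Stratify the roots with $\nu(Q)>0$ by their maximum Jacobi rank $R=R_\beta$, with $1\leq R\leq s$. In each stratum the weight $2^{t(D_d-s)+t\nu(Q)}$ is bounded by $2^{t(D_d-s)+t\nu_R}$, where $\nu_R$ is the bound from Proposition~\ref{prop:nucleus-bound}: $(s+1)R$, or $s(s+3)/2$, or $2$ at $R=1$. The stratum sum is then at most $2^{t(D_d-s)+t\nu_R}\sum_{Q:R_\beta=R}|S_Q|^k$. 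For that sum apply Cauchy--Schwarz:
\[
\sum_{Q:R_\beta=R}|S_Q|^k\leq n_R^{1/2}\,U_{2k}(d,s)^{1/2}.
\]
Lemma~\ref{lem:stabilizer-weight-sum} with $l=2k$ gives $U_{2k}(d,s)\leq 2^{B(d,s)+d^2}$ once $s\geq S_{2k}=12(2k+1)$; the strip gives $s\leq 0.51d$. The counts $n_R$ are bounded by~\eqref{eq:root-rank-bounds}. So $S(k)$ should be at least $24k+12$.

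Each stratum then has base-two logarithm at most
\[
\tfrac12\bigl(\log n_R\bigr)+\tfrac12\bigl(sD_d-s^2\bigr)+t(D_d-s)+t\nu_R .
\]
The term $\tfrac12(\log n_R)$ is at most $\tfrac12(sD_d-\Delta_R-\text{extra})$, where the extra saving is $0$, $\binom{d-R}{2}$, or $(d-R)^2-3(d-R)$ according to the case of~\eqref{eq:root-rank-bounds}. Compared with the unweighted case, the Cauchy--Schwarz step halves the savings $\Delta_R$ and the extras, which is the cost of the weight. On the other hand, $B(d,s)+d^2=sD_d-s^2$ absorbs the $-d^2$ from the $\GL$ normalization. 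Recall that $\mathcal F=sD_d+t(D_d-s)-d^2-z^2+\ldots$. So one must check that $\tfrac12\Delta_R-t\nu_R$ plus half the extra exceeds the deficit $d^2+z^2-s^2-st$, roughly, by an additional $d^2/25$.

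The main obstacle is this margin. In Proposition~\ref{prop:weighted-roots} the margin came from $\Delta_R-t(s+1)R\geq 27Rd^2/100$, which comfortably beats the deficit of about $1.26d^2$ only for $R\geq5$. After halving, the saving is only about $0.135Rd^2$ minus $t\nu_R/2$, and that does not work, because $t\nu_R$ is not halved. The Hölder split therefore has to be arranged more carefully. I would first try taking H\"older with exponents $(1+\eta,\,(1+\eta)/\eta)$ for small $\eta$, putting nearly all of the mass on $n_R$. This requires the moment $U_{l}$ with $l=k(1+\eta)/\eta$, which is still a fixed integer for fixed $\eta$; that is why the statement needs $s\geq S(k)$ large. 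Lemma~\ref{lem:stabilizer-weight-sum} gives the moment bound with only the $+d^2$ loss, scaled by the factor $\eta/(1+\eta)$. The margins in Proposition~\ref{prop:weighted-roots} are then multiplied by $1/(1+\eta)$, and the additive losses are $O(\eta d^2)$.

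In the unweighted proof the total saving was at least $d^2/16$ per stratum. Choose $\eta$ so that these losses cost less than $d^2/16-d^2/25$, uniformly over the cases $R\geq5$, $R\geq2$ with $s-R\geq2$, $R=1$, and the residual small-$R$ cases. The residual cases $R\leq4$, $s\leq5$ disappear once $s\geq S(k)>5$. Finally, sum over at most $d$ strata, losing $\log d\leq d^2/400$, to obtain the saving $d^2/25$.
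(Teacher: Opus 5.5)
Your final route is the paper's proof: H\"older with a large fixed moment order, Lemma~\ref{lem:stabilizer-weight-sum} to bound $U_l$, and the stratum margins of Proposition~\ref{prop:weighted-roots}. The paper takes $h=1000$ (your $\eta=1/999$), moment order $l=1000k$, and $S(k)=S_{1000k}$. You are also right that Cauchy--Schwarz loses too much.

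However, your last step fails as written, because the H\"older loss is not $O(\eta d^2)$ uniformly in $R$.

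\begin{itemize}
\item Let $\log_2 n_R\le A_R$ be the bound from \eqref{eq:root-rank-bounds} used on stratum $R$. The stratum exponent then exceeds the unweighted one by $\frac{\eta}{1+\eta}\bigl(B(d,s)+d^2-A_R\bigr)$.
\item For $R\ge5$ the paper uses the first bound in \eqref{eq:root-rank-bounds}. There this loss equals $\frac{\eta}{1+\eta}(\Delta_R-s^2)$, which is of order $\eta Rd^2$, and of order $\eta d^3$ when $R$ is comparable to $d$.
\item In your ``margins times $1/(1+\eta)$'' formulation, the same growth appears through the residual term $\frac{\eta}{1+\eta}\,t\nu_R$ with $\nu_R=(s+1)R$.
\end{itemize}
Consequently no fixed $\eta$ keeps the loss below the flat saving $d^2/16-d^2/25$ for all $R\ge5$.

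The repair, which is what the paper does, has two parts. First, bound the loss by
\[
\frac{\eta}{1+\eta}(\Delta_R+d^2)\le\frac{\eta}{1+\eta}\,(R/2+1)\,d^2 .
\]
Second, for $R\ge5$, absorb it into the $R$-linear margin $(1.2601-0.27R)d^2$ of \eqref{eq:root-rank-margin}, not into $d^2/16$. With $\eta=1/999$ this gives $(1.2611-0.2695R)d^2\le-0.0864\,d^2$ for $R\ge5$. For $R\le4$ the loss is at most $3d^2/1000$, set against the saving $d^2/16$. Summing over the at most $d$ strata still leaves a saving larger than $d^2/25$.
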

\begin{proof}
Write $n_R\leq2^{A_R}$ for the appropriate bound in \eqref{eq:root-rank-bounds}, and $\nu(Q)\leq u_R$ for the nucleus bound on the rank-$R$ stratum.
Take $h=1000$.
Empty strata contribute zero.
Since $s\leq z\leq0.501d<0.51d$, Lemma~\ref{lem:stabilizer-weight-sum}, with $l=kh$, applies for $s\geq S_{kh}$ and sufficiently large $d$.
H\"older's inequality gives
\[
\sum_{Q\text{ in stratum }R}|S_Q|^k
 \leq n_R^{1-\frac{1}{h}}U_{kh}(d,s)^{\frac{1}{h}}
 \leq2^{A_R+\frac{B(d,s)+d^2-A_R}{h}}.
\]
For each of the three bounds in \eqref{eq:root-rank-bounds}, the additional exponent is controlled by
\begin{equation}\label{eq:holder-cost}
\begin{aligned}
B(d,s)-A_R&\leq\Delta_R\leq Rd^2/2,\\
\frac{B(d,s)+d^2-A_R}{h}
&\leq\frac{\Delta_R+d^2}{1000}\leq\frac{R/2+1}{1000}d^2.
\end{aligned}
\end{equation}
Combining \eqref{eq:holder-cost} with the $d^2/16$ saving for $R\leq4$ in Proposition~\ref{prop:weighted-roots} and \eqref{eq:root-rank-margin} otherwise, the weighted stratum's logarithm minus $\mathcal F$ is at most
\[
 \begin{cases}
 -(1/16-3/1000)d^2=-0.0595d^2&\text{if }R\leq4\\
 (1.2611-0.2695R)d^2\leq-0.0864d^2&\text{if }R\geq5.
 \end{cases}
\]
There are at most $d$ strata.
Since $\log_2d\leq d^2/80$, summing the weighted contributions over the rank strata still leaves a saving greater than $d^2/25$.
Take $S(k)=\max\{3,S_{1000k}\}=S_{1000k}$.
\end{proof}

\subsection{Automorphism moments for all \texorpdfstring{$2$}{2}-groups}\label{subsec:automorphism-moments}

\begin{proposition}\label{prop:automorphism-moments}
 For each fixed nonnegative integer $k$, let $M_k(m)=\sum_{|P|=2^m}|\Aut(P)|^k$, where the sum is over abstract isomorphism classes.
 Then
 \begin{equation}\label{eq:automorphism-moments}
 \log_2M_k(m)=\lambda(m)+\frac{2k}{9}m^2+O(m).
 \end{equation}
\end{proposition}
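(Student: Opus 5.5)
The lower bound comes from roots, whose automorphism groups are large. The upper bound splits all groups of order $2^m$ into three families, as in the proof of Proposition~\ref{prop:nonroot-count}: bounded Frattini data, generator number away from $2m/3$, and the central strip. In each family I weight the counts already proved by a bound for $|\Aut(P)|^k$. For $k=0$ the statement is Corollary~\ref{cor:logarithmic-count}, so assume $k\geq1$.

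\emph{Lower bound.} Take a principal layer $d\in\mathcal T_m$ from \eqref{eq:sharp-saddle-table}, with $s=m-d$. By \eqref{eq:root-automorphism-order} every $Q\in\mathcal R_2(d,s)$ has $|\Aut(Q)|\geq2^{ds}$. Since $d=2m/3+O(1)$, we have $ds=2m^2/9+O(m)$. The proof of Theorem~\ref{thm:sharp} gives $B_2(d,s)\geq2^{\lambda(m)}$ on this layer. Together these give $M_k(m)\geq2^{\lambda(m)+2km^2/9-O(m)}$.

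\emph{A general automorphism bound.} For any $P$ of order $2^m$, write $d=d(P)$, $Q=P/\gamma_3^{(2)}(P)$, and $S_Q$ for the stabilizer attached to $Q$.
\begin{enumerate}[label=\textup{(\roman*)},leftmargin=*]
\item An automorphism acting trivially on $V=P/\Phi(P)$ moves each of $d$ generators within its $\Phi(P)$-coset. So the kernel of $\Aut(P)\to\GL(V)$ has order at most $|\Phi(P)|^d=2^{d(m-d)}$.
\item The image of $\Aut(P)$ in $\GL(V)$ factors through $\Aut(Q)$, so it lies in the projection of $S_Q$. That projection is injective, as noted before Lemma~\ref{lem:root-stabilizer-moments}.
\end{enumerate}
Hence
\[
|\Aut(P)|\leq|S_Q|\,2^{d(m-d)},\qquad\text{and trivially}\qquad|\Aut(P)|\leq2^{md}\leq2^{m^2}.
\]

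\emph{Upper bound, family by family.}
\begin{enumerate}[label=\textup{(\alph*)},leftmargin=*]
\item \emph{Groups with $|d/m-2/3|\geq\varepsilon$.} Use Proposition~\ref{prop:generator-localization} with the trivial bound $2^{km^2}$. The cubic saving $c_\varepsilon m^3$ swamps $km^2$.
\item \emph{Groups with $\dim\Phi(Q)\leq S$, for a fixed $S$.} Take $S=S(k)$ from Lemma~\ref{lem:weighted-roots-with-stabilizers}, and also large enough for Lemma~\ref{lem:stabilizer-weight-sum}. Proposition~\ref{prop:bounded-frattini} gives a saving $c_Sm^2\log m$, which beats the trivial factor $2^{km^2}$ for large $m$.
\item \emph{Roots in the central strip with $s>S$.} By Hölder, reduce to $\sum_Q|S_Q|^k\,2^{kds}$. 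If $s\geq\varepsilon d$, use \eqref{eq:weighted-orbit-identity} together with Lemma~\ref{lem:root-stabilizer-moments} at moment $h=k+1$. This gives
\[
\sum_Q|S_Q|^k\leq2^{B(d,s)+O(1)},
\]
and so a contribution of $2^{E_m(d)+kd(m-d)+O(1)}$. If $s<\varepsilon d$, the weights are not polynomially concentrated, but there $sD_d$ is far below $\lambda(m)$: such $s$ lie outside the strip $|d/m-2/3|<\varepsilon$ once $\varepsilon$ is small. So these cases are already covered by (a).
\item \emph{Nonroots in the central strip with $s>S$.} Combine the automorphism bound with Proposition~\ref{prop:descendant-count}. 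Summing over roots with $\nu(Q)>0$ via Lemma~\ref{lem:weighted-roots-with-stabilizers} gives, for each triple $(d,s,t)$, at most
\[
2^{\mathcal F(d,z)-d^2/25+E_2(d,s,t)+kd(m-d)}.
\]
Lemma~\ref{lem:weighted-curvature} bounds this by $2^{\lambda(m)+2km^2/9-d^2/25+O(d\log d)}$, which is negligible. There are polynomially many triples.
\end{enumerate}

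\emph{Main step: maximizing over $d$ for roots.} The remaining task is to bound $\max_d\bigl(E_m(d)+kd(m-d)\bigr)$ by $\lambda(m)+2km^2/9+O(m)$. This is exactly the $\delta$-identity used in the proof of Lemma~\ref{lem:weighted-curvature}, with $m=3d/2+\delta$. There, $\mathcal F-\lambda$ contributes $5d\delta/6-(d/3)\delta^2$ and the $k$-term contributes $d\delta/3-2\delta^2/9$. The negative term $-d\delta^2/3$ confines the maximum to $\delta=O(1)$, where the total excess is $O(d)=O(m)$. Summing over $d$ costs only a factor $m$. I expect this last step to be the main obstacle: the $O(m)$ must be kept honest, so the root part needs the full concentration of stabilizer moments from Lemma~\ref{lem:root-stabilizer-moments}. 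The cruder Lemma~\ref{lem:stabilizer-weight-sum} loses $d^2$ and would be too weak here.
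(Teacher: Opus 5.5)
Your proposal is correct and follows essentially the same route as the paper. The upper bound uses the same four-way split. Off-centre generator numbers are handled by Proposition~\ref{prop:generator-localization} with the trivial weight $2^{km^2}$, and bounded Frattini data by Proposition~\ref{prop:bounded-frattini}. Central-strip roots are handled via \eqref{eq:weighted-orbit-identity}, Lemma~\ref{lem:root-stabilizer-moments} and the exact $O(m)$ curvature computation. Central-strip nonroots are handled via $|\Aut(P)|^k\leq 2^{kd(m-d)}|S_Q|^k$ together with Proposition~\ref{prop:descendant-count}, Lemma~\ref{lem:weighted-roots-with-stabilizers} and Lemma~\ref{lem:weighted-curvature}. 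The lower bound comes, as in the paper, from roots near $d=2m/3$ using $|\Aut(Q)|\geq2^{ds}$.

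One small correction: no H\"older step is needed for the roots, since \eqref{eq:root-automorphism-order} gives $|\Aut(Q)|^k=2^{kds}|S_Q|^k$ exactly.
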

\begin{proof}
For $k=0$ this is Corollary~\ref{cor:logarithmic-count}; fix $k\geq1$.
Use the central generator interval
\begin{equation}\label{eq:central-generator-range}
 |d/m-2/3|<1/10000.
\end{equation}
Here $d>3m/5$, $m<2d$, and $|m-3d/2|<d/1000$.
Proposition~\ref{prop:generator-localization} bounds the number of groups of order $2^m$ with generator number outside \eqref{eq:central-generator-range} by
\begin{equation}\label{eq:all-group-generator-tail}
 2^{(\frac{2}{27}-c_\varepsilon)m^3+O(m^{\frac{8}{3}})},\quad
 c_\varepsilon=\min\{7/3375,7\varepsilon^2/15\}>0,
 \quad\varepsilon=1/10000.
\end{equation}
The trivial bound $|\Aut(P)|^k\leq2^{km^2}$ leaves that contribution negligible.

For roots in \eqref{eq:central-generator-range}, Lemma~\ref{lem:root-stabilizer-moments} and \eqref{eq:weighted-orbit-identity}, followed by \eqref{eq:root-automorphism-order}, give
\[
 \sum_{Q\in\mathcal R_2(d,s)}|\Aut(Q)|^k
 \leq2^{E_m(d)+kds+O(1)},\quad s=m-d.
\]
Writing $d=2m/3+x$, the exact exponent difference is
\[
\begin{aligned}
 E_m(d)+kd(m-d)-\lambda(m)-\frac{2k}{9}m^2
 ={}&-\frac m2x^2-\left(\frac56+\frac k3\right)mx\\
    &-\frac12x^3-\left(\frac52+k\right)x^2.
 \end{aligned}
\]
For $|x|\leq m/10000$, its right side is at most
\[
-\frac m4x^2+\left(\frac56+\frac k3\right)m|x|
 \leq\left(\frac56+\frac k3\right)^2m.
\]
The at most $m+1$ layers cost $O(\log m)$, so the root contribution has the required upper bound.
Conversely, with $d=\lfloor2m/3\rfloor$, $s=m-d$, Lemma~\ref{lem:root-lower-bound} gives
\[
|\mathcal R_2(d,s)|\geq2^{s(D_d-s)-d^2}\geq2^{\lambda(m)}.
\]
Each root has at least $2^{ds}$ automorphisms and $ds=2m^2/9+O(m)$, proving
\[
M_k(m)\geq2^{\lambda(m)+\frac{2k}{9}m^2-O(m)}.
\]

We now bound the weighted descendants at every depth.
For arbitrary $P$ let $Q=P/\gamma_3^{(2)}(P)$ and $d=d(P)$.
Once the action on $P/\Phi(P)$ is fixed, each of $d$ generators has at most $2^{m-d}$ possible images.
Hence the kernel of the action of $\Aut(P)$ on $P/\Phi(P)$ has size at most $2^{d(m-d)}$.
The image of this action embeds in $S_Q$, since the action on $\Phi(Q)$ is determined by the generating commutator and power values.
Therefore
\begin{equation}\label{eq:descendant-automorphisms}
 |\Aut(P)|^k\leq2^{kd(m-d)}|S_Q|^k.
\end{equation}

For nonroots in the central range with $s\geq S(k)$ and $t\geq1$, the hypotheses of Proposition~\ref{prop:descendant-count} and Lemma~\ref{lem:weighted-curvature} hold once $d$ is sufficiently large.
Use their notation $E_2(d,s,t)$ and $\mathcal F$; roots with $\nu(Q)=0$ have no positive-depth descendants.
The arbitrary-depth fiber bound, Lemma~\ref{lem:weighted-roots-with-stabilizers}, and \eqref{eq:descendant-automorphisms} bound the contribution for fixed $(d,s,t)$ by
\begin{equation}\label{eq:weighted-descendant-count}
 2^{\mathcal F-\frac{d^2}{25}+E_2(d,s,t)+kd(m-d)}.
\end{equation}
Lemma~\ref{lem:weighted-curvature} absorbs the fiber error and weight into $O(d\log d)$, so for fixed $k$ and sufficiently large $d$,
\begin{equation}\label{eq:weighted-descendant-curvature}
 \mathcal F+E_2(d,s,t)+kd(m-d)
 \leq\lambda(m)+\frac{2k}{9}m^2+\frac{d^2}{100}.
\end{equation}
Equations \eqref{eq:weighted-descendant-count}--\eqref{eq:weighted-descendant-curvature} therefore bound this nonroot contribution by $2^{\lambda(m)+(2k/9)m^2-3d^2/100}$.
For $s<S(k)$, Proposition~\ref{prop:bounded-frattini} with $p=2$ and $S=S(k)$ applies at every depth: multiplication by $2^{km^2}$ is absorbed by its $c_Sm^2\log_2m$ saving, since $k$ is fixed.
Outside \eqref{eq:central-generator-range}, the same weight leaves the cubic saving in \eqref{eq:all-group-generator-tail}.
Both contributions therefore have a fixed quadratic saving.
In the central range, $3d^2/100>27m^2/2500$, while summing over at most $(m+1)^2$ triples costs $O(\log m)$.
After increasing the threshold,
\[
\sum_{\substack{|P|=2^m\\\gamma_3^{(2)}(P)\ne1}}|\Aut(P)|^k
 \leq2^{\lambda(m)+\frac{2k}{9}m^2-\frac{m^2}{1000}}.
\]
Choose a common threshold for the preceding estimates, with $k$ fixed.
Together with the root bounds, this proves \eqref{eq:automorphism-moments}, with a threshold and an $O(m)$ constant depending only on $k$.
\end{proof}

\subsection{Odd-order outer actions}\label{subsec:odd-actions}

We use the completely reducible linear-group bound \cite[Theorem~1.2]{Maroti2004}: if $B\leq\GL(d,Q)$ is completely reducible and $Q$ is a prime power, then $k(B)\leq Q^{5d}$.
The weaker $Q^{10d}$ bound suffices below.
This is a direct source for the needed estimate, an earlier form of which is given in \cite[Corollary~5]{LP1997}.

\begin{lemma}\label{lem:odd-outer-classes}
 If $P$ is a finite $d$-generated $2$-group, then
 \[
k_{2'}(\Out(P))\leq2^{10d},
\]
 where $k_{2'}$ counts conjugacy classes of odd-order elements, including the identity.
\end{lemma}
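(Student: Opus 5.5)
The plan is to push the odd-order classes of $\Out(P)$ down to a completely reducible linear group over $\F_2$ of degree $d$, along homomorphisms whose kernels are $2$-groups, and then to apply the Maróti bound \cite[Theorem~1.2]{Maroti2004}.

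\textbf{Step 1: injection of odd classes.} I will use the following observation repeatedly. Let $G$ be a finite group, let $K\trianglelefteq G$ be a $2$-group, and let $x,y\in G$ have odd order with $xK$ conjugate to $yK$ in $G/K$. Then $x$ and $y$ are conjugate in $G$.

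To prove it, first conjugate so that $xK=yK$. Then $\langle x\rangle$ and $\langle y\rangle$ are complements to $K$ in $K\langle x\rangle=K\langle y\rangle$. By Schur--Zassenhaus (coprime orders, and $K$ solvable), some $k\in K$ satisfies $\langle x\rangle^k=\langle y\rangle$. Hence $x^k=y^i$ for some $i$. Since $x^kK=xK=yK$, we get $y^{i-1}\in\langle y\rangle\cap K=1$, so $x^k=y$.

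Consequently, for any epimorphism $G\to\bar G$ with $2$-group kernel, odd-order classes of $G$ map injectively to odd-order classes of $\bar G$. This gives $k_{2'}(G)\leq k_{2'}(\bar G)\leq k(\bar G)$.

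\textbf{Step 2: reduction to $\GL(d,2)$.} Let $V=P/\Phi(P)\cong\F_2^d$ and consider the natural map $\Aut(P)\to\GL(V)$. By Burnside's basis theorem its kernel is a $2$-group, since an automorphism of a $p$-group acting trivially on the Frattini quotient has $p$-power order. Inner automorphisms act trivially on $V$, so the map factors through $\Out(P)$. The kernel of $\Out(P)\to\GL(V)$ is the image of a $2$-group, hence a $2$-group. Let $B\leq\GL(d,2)$ be the image. Step 1 gives $k_{2'}(\Out(P))\leq k_{2'}(B)$.

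\textbf{Step 3: complete reducibility.} Choose a $B$-composition series $0=V_0<\cdots<V_r=V$ and let $C$ be the image of $B$ in $\prod_i\GL(V_i/V_{i-1})\leq\GL(d,2)$. By construction $C$ acts completely reducibly on $\bigoplus_i V_i/V_{i-1}$, a $d$-dimensional space. The kernel of $B\to C$ consists of elements acting trivially on every factor, i.e.\ unitriangular elements with respect to the flag. It is therefore a unipotent group, hence a $2$-group. Applying Step 1 again gives $k_{2'}(B)\leq k(C)$. The cited theorem with $Q=2$ then gives $k(C)\leq2^{5d}\leq2^{10d}$.

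\textbf{Expected obstacle.} The only delicate points are the two uses of the kernel being a $2$-group. For $\Aut(P)\to\GL(V)$ this is standard: the kernel stabilizes the series $P\geq\Phi(P)\geq\cdots$ along the lower exponent-$2$ central filtration. For the passage to the semisimplification it is the unipotence of the flag stabilizer acting trivially on the factors. The case $d=0$, where $P$ is trivial, is immediate.
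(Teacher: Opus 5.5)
Your proof is correct and follows the paper's argument essentially step for step: pass to the image $B$ of $\Out(P)\to\GL(d,2)$, whose kernel is a $2$-group (the paper cites this as the Frattini automorphism theorem rather than Burnside's basis theorem). Then pass to the semisimplification along a composition series, whose kernel is unitriangular; use Schur--Zassenhaus to control odd-order classes across $2$-group kernels; and apply Mar\'oti's bound. The only difference is that you prove injectivity on odd-order classes where the paper proves a bijection, and injectivity is all the inequality needs.
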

\begin{proof}
For $P=1$ both sides are one.
Otherwise the Frattini automorphism theorem \cite[Lemma~3.13]{BNV2007} says that the kernel of $\Aut(P)\to\GL(d,2)$ is a $2$-group.
The kernel contains $\Inn(P)$, so $\Out(P)$ has a normal $2$-subgroup with quotient isomorphic to a subgroup $B\leq\GL(d,2)$.

If $K\trianglelefteq A$ is a $2$-group, the quotient map induces a bijection
\[
\{\text{odd-order conjugacy classes in }A\}
 \longleftrightarrow
 \{\text{odd-order conjugacy classes in }A/K\}.
\]
Indeed, the inverse image $E$ of a cyclic subgroup $\langle\bar x\rangle$ of odd order has normal Hall subgroup $K$.
Schur--Zassenhaus \cite[Corollary~7.17 and the following discussion]{BNV2007} supplies a complement mapping isomorphically onto $\langle\bar x\rangle$, hence an odd-order lift of $\bar x$.
Two such lifts $x,y$ have the same order as $\bar x$, since their cyclic subgroups meet $K$ trivially.
These cyclic subgroups are complements in $E$ and are conjugate under $K$.
Since both elements map to $\bar x$, this conjugation also takes $x$ to $y$.
If the quotient elements are merely conjugate, first lift a conjugating element.
This proves the bijection.

Choose a $B$-invariant composition series $0=V_0<V_1<\cdots<V_t=V$.
The kernel $K_1$ of the action of $B$ on $\bigoplus_i V_i/V_{i-1}$ is a normal $2$-subgroup: in an adapted basis the matrices in $K_1$ are block upper triangular with identity diagonal blocks.
The quotient $\bar B=B/K_1$ acts faithfully and completely reducibly on that direct sum, of total dimension $d$.
All hypotheses of \cite[Theorem~1.2]{Maroti2004} hold.
Therefore
\[
 k_{2'}(\Out(P))=k_{2'}(B)=k_{2'}(\bar B)
 \leq k(\bar B)\leq2^{5d}\leq2^{10d}.\qedhere
\]
\end{proof}

\begin{lemma}\label{lem:outer-actions}
 Let $P$ be a finite $2$-group, and let $H$ be a fixed finite group of order $2^cq$, where $q>1$ is odd.
 Then
 \[
\#\bigl(\Hom(H,\Out(P))/\Out(P)\bigr)
 \leq2^{10d(P)}|\Aut(P)|^{c+\Omega(q)-1}.
\]
\end{lemma}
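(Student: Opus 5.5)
Our plan is to count conjugacy classes of homomorphisms $H\to\Out(P)$ by fixing generators of $H$ along a composition series and bounding images step by step. Write $A=\Out(P)$, so $|A|\leq|\Aut(P)|$. Choose a subnormal series $1=H_0\trianglelefteq H_1\trianglelefteq\cdots\trianglelefteq H_n=H$ with cyclic factors of prime order. There are exactly $c+\Omega(q)$ factors, $c$ of order $2$ and $\Omega(q)$ of odd prime order. Pick $h_i\in H_i$ mapping to a generator of $H_i/H_{i-1}$. Then a homomorphism $\phi:H\to A$ is determined by the tuple $(\phi(h_1),\ldots,\phi(h_n))$, since these elements generate $H$.

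The main saving comes from one odd step, which we place first. Since $q>1$, we can choose the series so that $H_1$ has odd prime order $\ell$. For example, take a Sylow $\ell$-subgroup $L$ of $H$ and a cyclic subgroup $C$ of order $\ell$ in it. We do not need $H_1$ to be normal in $H$; we only need every element of $H$ to be a word in the chosen generators. So we abandon the subnormal series and use any generating tuple $(h_1,\ldots,h_n)$ with $n=c+\Omega(q)$ and $h_1$ of odd order. Such a tuple exists because every finite group of order $N$ is generated by at most $\Omega(N)$ elements. Indeed, a minimal generating set $x_1,\ldots,x_r$ gives a strictly increasing chain $\langle x_1\rangle<\langle x_1,x_2\rangle<\cdots$, so $r\leq\Omega(|H|)$. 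To arrange the odd first generator, start the chain with an element $x_1$ of odd prime order, which exists since $q>1. Then extend greedily: add elements not in the current subgroup until it equals $H$. Each inclusion is strict, so the number of steps is at most $\Omega(|H|)=c+\Omega(q)$.

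For the counting, we count $A$-conjugacy classes of $\phi$. First conjugate to fix the class of $\phi(h_1)$. This image has odd order dividing $\ell$, and by Lemma~\ref{lem:odd-outer-classes} there are at most $k_{2'}(A)\leq2^{10d(P)}$ choices up to conjugacy. Fixing a class representative $a_1$ leaves every class of $\phi$ represented by some $\phi$ with $\phi(h_1)=a_1$. The remaining $n-1$ images are then arbitrary elements of $A$, giving at most $|A|^{n-1}\leq|\Aut(P)|^{c+\Omega(q)-1}$ choices. This yields the stated bound.

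The main obstacle is the first step: ensuring a generating tuple of length at most $c+\Omega(q)$ whose first member has odd order. The greedy chain argument handles it. The point to check carefully is that the $\Omega$ bound counts strict inclusions. Lagrange gives at least one prime factor per step, so the greedy process cannot exceed $\Omega(|H|)$ generators.
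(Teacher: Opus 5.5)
Your final argument is correct and is essentially the paper's proof. A strict chain of subgroups starting from a subgroup of odd prime order $\ell$ gives at most $c+\Omega(q)$ generators; the image of the first generator is conjugated into one of at most $2^{10d(P)}$ representatives from Lemma~\ref{lem:odd-outer-classes}; and the remaining images are bounded using $|\Out(P)|\leq|\Aut(P)|$. Your opening claim of a subnormal series with prime-order cyclic factors holds only for solvable $H$, so you were right to discard it in favor of the greedy chain, which needs no solvability.
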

\begin{proof}
Choose $x\in H$ of odd prime order $\ell$.
Starting with $H_1=\langle x\rangle$, adjoin one generator at a time along a strict chain $H_1<\cdots<H_t=H$.
Since
\[
 |H|/\ell=\prod_{i=1}^{t-1}[H_{i+1}:H_i],
\]
each index has at least one prime factor, and $t-1\leq c+\Omega(q)-1$.
Under a homomorphism, $x$ has an odd-order image.
First conjugate that image into one of the at most $2^{10d(P)}$ representatives supplied by Lemma~\ref{lem:odd-outer-classes}, then choose the other generator images in $\Out(P)$.
Relations and residual centralizer conjugacy can only reduce this upper bound, and $|\Out(P)|\leq|\Aut(P)|$.
\end{proof}

\subsection{Extensions with prescribed outer action}\label{subsec:outer-extensions}

\begin{lemma}\label{lem:outer-extensions}
 For fixed finite groups $N,H$ and a prescribed outer action $\theta:H\to\Out(N)$, the number of isomorphism classes of extensions $1\to N\to G\to H\to1$ inducing $\theta$ is at most
 \[
|Z(N)|^{|H|^2}.
\]
 For $|N|=2^b$ and fixed $H$, this is $2^{O(b)}$.
\end{lemma}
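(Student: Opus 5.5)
Fix an extension $1\to N\to G\to H\to1$ inducing $\theta$. We follow the classical nonabelian-extension parametrization. Choose a set-theoretic section $h\mapsto u_h\in G$ with $u_1=1$. Then the extension is determined, up to equivalence, by two pieces of data. The first is the maps $\phi_h=\mathrm{conj}(u_h)|_N\in\Aut(N)$, which lift $\theta(h)$. The second is the factor set $f(h,k)=u_hu_ku_{hk}^{-1}\in N$.

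We fix once and for all, for each $h\in H$, a lift $\psi_h\in\Aut(N)$ of $\theta(h)$. Replacing $u_h$ by $n_hu_h$ for suitable $n_h\in N$ changes $\phi_h$ by the inner automorphism $\mathrm{conj}(n_h)$. Since $\phi_h$ and $\psi_h$ differ by an inner automorphism, we may therefore normalize the section so that $\phi_h=\psi_h$ exactly for every $h$. After this normalization, the only remaining freedom is to multiply each $u_h$ by an element of $Z(N)$.

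Now compare two extensions with the same normalized data $(\psi_h)$ and factor sets $f,f'$. The identity $\psi_h\psi_k=\mathrm{conj}(f(h,k))\psi_{hk}$ holds for both. Hence $f'(h,k)f(h,k)^{-1}$ centralizes $N$ modulo the relevant identification, so $f'=zf$ with $z\colon H\times H\to Z(N)$. The standard computation shows that the multiplication on $N\times H$ defined by $(n,h)(n',h')=(n\,\psi_h(n')\,f(h,h'),hh')$ recovers $G$ up to isomorphism, with $(\psi,f)$ as input. Therefore, once a single reference factor set $f_0$ compatible with $(\psi_h)$ is fixed, every extension inducing $\theta$ is isomorphic to one obtained from a factor set of the form $zf_0$ with $z\in Z(N)^{H\times H}$. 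If no compatible factor set exists, the count is zero.

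This gives at most $|Z(N)|^{|H|^2}$ isomorphism classes; we do not need the cocycle condition on $z$ or the quotient by coboundaries, since we only seek an upper bound. For $|N|=2^b$ we have $|Z(N)|\le 2^b$, so with $H$ fixed the count is at most $2^{b|H|^2}=2^{O(b)}$.

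The main point requiring care is the normalization step. One must check that the maps $\phi_h$ can be simultaneously fixed to prescribed lifts $\psi_h$, and that the factor sets of two extensions sharing these lifts differ by central elements. The second fact follows because $\mathrm{conj}(f(h,k))=\psi_h\psi_k\psi_{hk}^{-1}$ is determined by the data, and an element of $N$ inducing a given inner automorphism is unique modulo $Z(N)$.
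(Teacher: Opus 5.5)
Your argument is correct and is essentially the paper's proof: normalize the section so that conjugation by $u_h$ is exactly a fixed lift $\psi_h$ of $\theta(h)$ (take $\psi_1=1$ so this is compatible with $u_1=1$), note that each $f(h,k)$ is then confined to the single $Z(N)$-coset determined by $\psi_h\psi_k\psi_{hk}^{-1}$, and count the $|Z(N)|^{|H|^2}$ possible choices, using the normal form $n u_h$ to see that identical data give isomorphic groups. The final step, $|Z(N)|\le 2^b$ giving $2^{b|H|^2}=2^{O(b)}$, is exactly what the statement requires.
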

\begin{proof}
Choose representatives $\sigma_h\in\Aut(N)$ of $\theta(h)$, with $\sigma_1=1$, and use $\Inn(u)(n)=unu^{-1}$.
In any realizing extension choose a section $s:H\to G$.
If conjugation by $s(h)$ is $\Inn(n_h)\sigma_h$, replace $s(h)$ by $n_h^{-1}s(h)$; it then acts by exactly $\sigma_h$.
Make these choices independently, with $s(1)=1$.
The resulting factor set satisfies
\[
 \Inn(u(h,k))=\sigma_h\sigma_k\sigma_{hk}^{-1}.
\]
For each pair $(h,k)$, the possible values of $u(h,k)$ lie in a single coset of $Z(N)$, of size at most $|Z(N)|$.
The multiplication is
\[
(n,h)(n',k)=(n\sigma_h(n')u(h,k),hk).
\]
There are $|H|^2$ pairs.
Counting all choices of the values $u(h,k)$ gives an upper bound, since some choices may fail associativity or normalization.
Two extensions with identical data are isomorphic through their normal forms $ns(h)$.
This proves the bound.
\end{proof}

\subsection{Fixed odd cofactors}\label{subsec:odd-cofactors}

\begin{proposition}\label{prop:fixed-odd-cofactor}
 For every fixed odd integer $q>1$,
 \begin{equation}\label{eq:fixed-cofactor-bound}
 \log_2f(2^aq)\leq\lambda(a)
       +\frac{2(\Omega(q)-1)}9a^2+O(a).
 \end{equation}
\end{proposition}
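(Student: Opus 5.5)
Every group $G$ of order $2^aq$ is solvable by Burnside's $p^aq^b$ theorem when $q$ is a prime power, and by Feit--Thompson in general, since $q$ is odd. It therefore has a normal subgroup of $2$-power order, but that subgroup need not be a Sylow $2$-subgroup. The plan is to organise $G$ around its $2$-core $N=O_2(G)$. Write $|N|=2^b$ and $H=G/N$, so $|H|=2^{a-b}q$ and $H$ ranges over the boundedly many groups of order $2^cq$ with $c=a-b$. Since $O_2(H)=1$ and $H$ is solvable, $H$ has a nontrivial odd-order normal subgroup; with $q$ fixed, $|H|_2$ is bounded by the size of a $2$-subgroup of $\GL$ acting faithfully on $F(H)$, a group of order dividing $q$. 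Hence $c\leq C(q)$ is bounded, and only finitely many $H$ occur. The count is then
\[
f(2^aq)\leq\sum_{c\leq C(q)}\sum_{H}\sum_{|N|=2^{a-c}}\#\bigl(\Hom(H,\Out(N))/\Out(N)\bigr)\cdot\max_\theta\#\{\text{extensions inducing }\theta\}.
\]
Conjugate outer actions give isomorphic collections of extensions, so counting actions up to $\Out(N)$-conjugacy is legitimate.

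Next insert the lemmas. Lemma~\ref{lem:outer-extensions} bounds the last factor by $2^{O(a)}$ because $H$ is fixed. Lemma~\ref{lem:outer-actions} bounds the action count by $2^{10d(N)}|\Aut(N)|^{c+\Omega(q)-1}$, and $2^{10d(N)}\leq2^{10a}$. The sum over $N$ is thus $M_k(b)$ with $k=c+\Omega(q)-1$ and $b=a-c$. Proposition~\ref{prop:automorphism-moments} gives
\[
\log_2M_k(b)=\lambda(b)+\frac{2k}{9}b^2+O(b).
\]

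The main obstacle is that $k$ grows with $c$. Each extra factor of $2$ in $H$ costs a full $\frac29a^2$ from the $|\Aut(N)|$ weight, while lowering $b$ by one only saves $\lambda(a)-\lambda(a-1)\approx\frac29a^2$. These cancel at leading order, so the comparison has to be done more carefully. For $c\geq1$, compare $\lambda(a-c)+\frac{2(c+\Omega(q)-1)}9(a-c)^2$ with the target. The term $\lambda(a-c)$ loses $\frac{2}{9}ca^2-O(ca)$, and the weight gains $\frac{2c}{9}a^2-O(ca)$, so the total is $\lambda(a)+\frac{2(\Omega(q)-1)}{9}a^2+O(a)$ with a constant depending on $q$ through $C(q)$. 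That is exactly the claimed bound, uniform over the boundedly many $c$.

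If the $O(a)$ terms fail to cancel, I would refine Lemma~\ref{lem:outer-actions}: choose the generator chain so that the Sylow $2$-part of $H$ maps into the normal $2$-subgroup structure more cheaply. Alternatively, take $N$ to be a Sylow-containing normal $2$-subgroup only when $c=0$, and treat $c\geq1$ through a finer moment with exponent $\Omega(q)-1$. Because $c$ and $H$ are bounded, summing over them costs only $O(1)$, and summing over $N$ is already included in $M_k$, which completes \eqref{eq:fixed-cofactor-bound}.
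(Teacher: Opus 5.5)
\textbf{Gap: your bound on $c$.} The claim that every group of order $2^aq$ with $q$ odd is solvable ``by Feit--Thompson'' is false. Feit--Thompson concerns groups of odd order, and $A_5$ has order $2^2\cdot 15$. Take $q=15$ and $G=A_5\times C_2^{a-2}$. Then $O_2(G)=C_2^{a-2}$ and $H=G/O_2(G)\cong A_5$. This $H$ is nonsolvable, with $O_2(H)=1$ and $F(H)=1$. So your assertion that $H$ has a nontrivial odd-order normal subgroup on which a Sylow $2$-subgroup acts faithfully breaks down, and your bound on $c$ is unjustified whenever $q$ is not a prime power. Even in the solvable case the step needs $C_H(F(H))\le F(H)$ and $\Aut(F(H))$ in place of a $\GL$, since $F(H)$ need not be abelian.

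The conclusion is nevertheless true, and the paper uses an elementary fix requiring no solvability. $O_2(G)$ is the core of a Sylow $2$-subgroup $P$, that is, the kernel of the action of $G$ on the $q$ cosets of $P$. Hence $H$ embeds in the symmetric group of degree $q$, and $0\le c\le v_2(q!)$.

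\textbf{The rest matches the paper.} With that replacement your argument is the paper's proof. You take $N=O_2(G)$ and fix one of the finitely many $H$ of order $2^cq$. Lemma~\ref{lem:outer-actions} bounds the conjugacy classes of outer actions with $k=c+\Omega(q)-1$. Lemma~\ref{lem:outer-extensions} bounds the extensions per action by $2^{O(a)}$. The sum over $N$ then becomes $M_k(a-c)$, and Proposition~\ref{prop:automorphism-moments} gives the bound, with the $\frac{2c}{9}a^2$ loss in $\lambda(a-c)$ cancelling the $\frac{2c}{9}a^2$ gain from the weight $\frac{2k}{9}(a-c)^2$.

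Your final paragraph of contingency plans is unnecessary. Nothing has to cancel at order $a$: every lower-order term is $O(a)$ with a constant depending only on $c$, and $c$ ranges over a finite set determined by $q$.
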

\begin{proof}
For $|G|=2^aq$, choose a Sylow $2$-subgroup $P$ and let $N=\operatorname{core}_G(P)$, $[P:N]=2^c$, and $H=G/N$.
The coset action embeds $H$ in the symmetric group on $q$ letters, so
\[
0\leq c\leq v_2(q!),\quad |N|=2^{a-c},\quad |H|=2^cq.
\]
In fact $N=O_2(G)$: the core is a normal $2$-subgroup, and every normal $2$-subgroup lies in every Sylow $2$-subgroup.
For fixed $q$ only finitely many $c$ and abstract types of $H$ occur.
We may sum over all groups of order $2^cq$, without counting their embeddings in the symmetric group.

Fix $c,H$ and let $b=a-c$, $k=c+\Omega(q)-1$.
Lemma~\ref{lem:outer-actions} bounds outer actions up to conjugacy by $2^{10d(N)}|\Aut(N)|^k$.
Conjugacy in $\Out(N)$ is legitimate: each conjugating element lifts to $\Aut(N)$, and relabeling the kernel preserves the abstract isomorphism class of the middle group.
For each action, Lemma~\ref{lem:outer-extensions} gives at most $|Z(N)|^{|H|^2}$ extensions.
Since $d(N)\leq a$ and $|Z(N)|\leq2^a$, the contribution of all such kernels is at most
\[
2^{O(a)}\sum_{|N|=2^{a-c}}|\Aut(N)|^k
 =2^{O(a)}M_k(a-c).
\]
More explicitly,
\begin{equation}\label{eq:cofactor-sum}
 f(2^aq)\leq\sum_{c=0}^{\min\{a,v_2(q!)\}}
 f(2^cq)\,2^{(10+(2^cq)^2)a}
 M_{c+\Omega(q)-1}(a-c).
\end{equation}
Proposition~\ref{prop:automorphism-moments} gives
\[
 \log_2M_k(a-c)\leq\lambda(a-c)+\frac{2k}9(a-c)^2+O(a).
\]
For fixed $c$, $\lambda(a-c)=\lambda(a)-(2c/9)a^2+O(a)$.
With $k=c+\Omega(q)-1$, the quadratic terms in $c$ cancel, leaving \eqref{eq:fixed-cofactor-bound}.
All moment orders, quotient counts, and linear coefficients in \eqref{eq:cofactor-sum} range over a fixed finite set; taking the largest threshold and summing proves the assertion.
\end{proof}

\subsection{Summation over a dyadic interval}\label{subsec:dyadic-sum}

We use two published estimates with constants uniform over all orders.
If $\mu(n)=\max_{p^a\parallel n}a$, then, after fixing all abstract Sylow types, \cite[Theorem~16.20]{BNV2007} bounds the number of groups of order $n$ by
\begin{equation}\label{eq:fixed-sylow-bound}
 n^{\kappa\mu(n)+C_B},\quad \kappa=97/4,\quad C_B=278852.
\end{equation}
Also \cite[Corollary~16.21]{BNV2007} supplies an absolute constant $C$ for which, for every $n\geq2$,
\begin{equation}\label{eq:uniform-group-bound}
 \log_2f(n)\leq(\log_2n)
 \left(\frac2{27}\mu(n)^2+C\mu(n)^{\frac{3}{2}}\right).
\end{equation}
This estimate is uniform over the prime divisors of $n$; we do not apply the fixed-prime estimate of Corollary~\ref{cor:logarithmic-count} uniformly over those primes.
The alternative bound $n^{75\mu(n)+16}$ from \cite[p.~204]{Pyber1993} is also sufficient, with the cutoff specified below.

\begin{proof}[\rm\textbf{Proof of Theorem~\ref{thm:main}}]
Every non-power of two $n<2^{m+1}$ has the form
\[
 n=2^{m-r}q,\quad1\leq r\leq m,\quad
 q>1\text{ odd},\quad q<2^{r+1}.
\]
Summing \eqref{eq:fixed-sylow-bound} over Sylow types gives
\begin{equation}\label{eq:sylow-factor-count}
 f(n)\leq n^{\kappa\mu(n)+C_B}P_2(m-r)f(q).
\end{equation}
The product of the numbers of odd Sylow types is the number of nilpotent groups of order $q$, and is at most $f(q)$.
By Corollary~\ref{cor:logarithmic-count} and a finite adjustment at small orders, there are absolute $C_2,C_0\geq0$ such that
\[
 \log_2P_2(a)\leq\lambda(a)+C_2a+C_0\quad(a\geq0).
\]
Set $t=\log_2q$.
Since $q$ is odd, $\mu(q)\leq t/\log_2 3<2t/3$, so \eqref{eq:uniform-group-bound} gives uniformly
\[
\log_2f(q)\leq\frac8{243}t^3+O(t^{\frac{5}{2}})
 \leq\frac8{243}r^3+O(r^2+r^{\frac{5}{2}}),
\]
using $t<r+1$.
With $u=r/m\in[0,1]$, the exact identity
\[
\begin{aligned}
 \lambda(m)-\lambda(m-r)-\frac8{243}r^3
 &=\frac{rm^2}{243}\bigl(10+(1-u)(44-10u)\bigr)
       -\frac89rm+\frac49r^2\\
 &\geq\frac{10}{243}rm^2-\frac89rm
 \end{aligned}
\]
follows by expansion and $(1-u)(44-10u)\geq0$.
Also $\mu(n)\leq\log_2n<m+1$, whence
\[
 (\kappa\mu(n)+C_B)\log_2n\leq\kappa m^2+O(m).
\]
Substitution in \eqref{eq:sylow-factor-count} gives
\[
 \log_2f(n)-\lambda(m)
 \leq\kappa m^2-\frac{10}{243}rm^2
      +O(rm+r^2+r^{\frac{5}{2}})+O(m).
\]
Since $r^2\leq rm$ and $r^{5/2}\leq rm^{3/2}$, the first error divided by $rm^2$ tends to zero uniformly for $1\leq r\leq m$.
As $1/25<10/243$, it follows that, uniformly for large $m$,
\[
 \log_2f(n)-\lambda(m)\leq(\kappa-r/25)m^2+O(m).
\]
Fix $R=1000$.
If $R\leq r\leq m$, then $\kappa-r/25\leq-63/4$.
There are fewer than $2^{m+1}$ orders to sum over, so the sum still has a fixed quadratic saving.
Using the alternative bound of \cite{Pyber1993}, take $R=2000$ instead; then $75-r/25\leq-5$.

If $1\leq r<R$, the possible pairs $(r,q)$ with $q<2^{r+1}$ form a fixed finite set.
Only for these cofactors do we invoke Proposition~\ref{prop:fixed-odd-cofactor}, obtaining
\[
 \log_2f(2^{m-r}q)
 \leq\lambda(m)-\frac29(r-\Omega(q)+1)m^2+O(m).
\]
The inequalities $3^{\Omega(q)}\leq q<2^{r+1}$ imply $\Omega(q)\leq r$, so the saving is at least $2/9$.
Finiteness allows one common threshold and error constant.
Combining the two ranges gives, for some $c_1>0$,
\begin{equation}\label{eq:nonbinary-order-sum}
 \sum_{\substack{n<2^{m+1}\\n\text{ not a power of }2}}f(n)
 \leq2^{\lambda(m)-c_1m^2}.
\end{equation}

For smaller powers of two, including $2^0=1$, eventual monotonicity of $\lambda$ and the all-range bound for $P_2$ give
\[
 \sum_{0\leq i<m}P_2(i)
 \leq O(1)+m\,2^{\lambda(m-1)+C_2m+C_0}.
\]
Since $\lambda(m)-\lambda(m-1)=2m^2/9+O(m)$, there is $c_2>0$ such that
\begin{equation}\label{eq:smaller-binary-order-sum}
 \sum_{0\leq i<m}P_2(i)\leq2^{\lambda(m)-c_2m^2}.
\end{equation}
For every $x\in[2^m,2^{m+1})$, the count $F(x)$ includes all groups of order $2^m$; the contribution from groups of all other orders is controlled by \eqref{eq:nonbinary-order-sum}--\eqref{eq:smaller-binary-order-sum}.
Since $P_2(m)\geq2^{\lambda(m)}$,
\[
 0\leq\frac{F(x)}{P_2(m)}-1
 \leq2^{-c_1m^2}+2^{-c_2m^2}.
\]
Decreasing the exponent constant proves \eqref{eq:dyadic-asymptotic}.
Finally $m\leq\log_2x<m+1$ and $m^2\geq(\log_2x)^2/4$ for $m\geq1$, proving \eqref{eq:cumulative-asymptotic}.

It remains to prove the assertion about nilpotency class.
Let $b(m)$ be the number of abelian groups of order $2^m$.
The classification of finite abelian groups gives $b(m)\leq2^{m-1}$ for $m\geq1$.
Since $P_2(m)\geq2^{\lambda(m)}$, we have
\[
\frac{b(m)}{P_2(m)}\leq2^{m-1-\lambda(m)}.
\]
The number of groups of order $2^m$ and class exactly two is $Q_2(m)-b(m)$.
Hence
\[
\begin{aligned}
0\leq1-\frac{Q_2(m)-b(m)}{F(x)}
&\leq\frac{F(x)-P_2(m)}{P_2(m)}
   +\frac{P_2(m)-Q_2(m)}{P_2(m)}
   +\frac{b(m)}{P_2(m)}\\
&\leq O(2^{-cm^2})+2^{-\frac{m^2}{300}}+2^{m-1-\lambda(m)},
\end{aligned}
\]
where $(P_2(m)-Q_2(m))/P_2(m)\leq2^{-m^2/300}$ follows from Theorem~\ref{thm:structure}.
Decreasing $c$ if necessary completes the proof.
\end{proof}

\begin{corollary}\label{cor:cumulative-domination}
There are absolute constants $K,c>0$ and an integer $M\geq1$ such that, for every integer $m\geq M$,
\begin{equation}\label{eq:cumulative-domination}
\sum_{1\leq n<2^m}f(n)\leq K2^{-cm^2}f(2^m)<f(2^m).
\end{equation}
Consequently, $f(n)<f(2^m)$ for $1\leq n<2^m$, and $f$ has its unique maximum on $\{1,\ldots,2^m\}$ at $2^m$.
\end{corollary}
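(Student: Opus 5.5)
The plan is to read the corollary off Theorem~\ref{thm:main} applied at $x=2^m-1$, combined with the ratio estimate $f(2^{m-1})/f(2^m)=2^{-\Theta(m^2)}$ obtained from Theorem~\ref{thm:sharp} at $p=2$ (or from the bounds inside the proof of Theorem~\ref{thm:main}).

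First, for $x=2^m-1$ we have $2^{m-1}\leq x<2^m$, so Theorem~\ref{thm:main} with $m-1$ in place of $m$ gives
\[
F(2^m-1)=f(2^{m-1})\bigl(1+O(2^{-c(m-1)^2})\bigr)\leq 2f(2^{m-1})
\]
for large $m$. Second, we compare $f(2^{m-1})$ with $f(2^m)$. From Corollary~\ref{cor:logarithmic-count} we get $\log_2 f(2^a)=\lambda(a)+O(a)$, and moreover $\lambda(m)-\lambda(m-1)=\tfrac{2}{9}m^2+O(m)$. Hence
\[
\frac{f(2^{m-1})}{f(2^m)}\leq 2^{-\frac{2}{9}m^2+O(m)}\leq 2^{-m^2/5}
\]
eventually. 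Multiplying the two estimates gives $F(2^m-1)\leq K2^{-cm^2}f(2^m)$, with any $c<1/5$ (or $c$ equal to the minimum of this and the constant of Theorem~\ref{thm:main}) and some absolute $K$. Alternatively, one can cite directly the bounds \eqref{eq:nonbinary-order-sum} and \eqref{eq:smaller-binary-order-sum} from the proof of Theorem~\ref{thm:main}, taken at level $m-1$ together with $P_2(m-1)\leq 2^{\lambda(m-1)+O(m)}$, and then use $f(2^m)\geq 2^{\lambda(m)}$ from Lemma~\ref{lem:root-lower-bound}. Either route needs only the uniform-in-$m$ statements already proved. Choosing $M$ so that $K2^{-cM^2}<1$ gives the strict inequality in \eqref{eq:cumulative-domination}.

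The consequences are then immediate. Each $f(n)$ with $n<2^m$ is a single summand of the nonnegative sum $\sum_{1\leq n<2^m}f(n)$, so $f(n)<f(2^m)$. This is exactly the statement that $2^m$ is the unique maximizer of $f$ on $\{1,\ldots,2^m\}$.

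The only point needing care is that all constants are absolute and the threshold is uniform. This holds because $p=2$ is fixed and the implied constants in Theorem~\ref{thm:main} and Corollary~\ref{cor:logarithmic-count} depend on nothing else. The one thing to check is the calculus identity for $\lambda(m)-\lambda(m-1)$; I expect no real obstacle.
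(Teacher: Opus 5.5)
Your argument is correct, but it takes a more roundabout route than the paper. The paper applies Theorem~\ref{thm:main} at $x=2^m$ itself, at level $m$ rather than $m-1$. Since $F(2^m)-f(2^m)=\sum_{1\leq n<2^m}f(n)$, the estimate $F(2^m)=f(2^m)\bigl(1+O(2^{-cm^2})\bigr)$ is literally the first inequality of \eqref{eq:cumulative-domination}, and no comparison between $f(2^{m-1})$ and $f(2^m)$ is needed.

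You instead apply Theorem~\ref{thm:main} one dyadic level lower, only to get $F(2^m-1)\leq 2f(2^{m-1})$. You then import the ratio $f(2^{m-1})/f(2^m)=2^{-\frac{2}{9}m^2+O(m)}$ from Corollary~\ref{cor:logarithmic-count}. This uses an extra ingredient, though not a circular one, since the proof of Theorem~\ref{thm:main} already relies on Corollary~\ref{cor:logarithmic-count}.

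What it buys is an explicit exponent: any $c<2/9$ works, independently of the unspecified constant in Theorem~\ref{thm:main}. This is essentially sharp, because $F(2^m-1)\geq f(2^{m-1})=2^{-\frac{2}{9}m^2+O(m)}f(2^m)$. For the same reason, your parenthetical taking the minimum with the constant of Theorem~\ref{thm:main} is unnecessary. That constant only enters through the factor $1+O(2^{-c(m-1)^2})\leq 2$.

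Your \emph{alternative} route via \eqref{eq:nonbinary-order-sum} and \eqref{eq:smaller-binary-order-sum} also works. Taken at level $m$ instead of $m-1$, together with $f(2^m)\geq 2^{\lambda(m)}$, it is exactly the paper's argument unpacked. The deduction of $f(n)<f(2^m)$ and of the unique maximum is the same as in the paper.
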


\begin{proof}[\rm\textbf{Proof of Corollary~\ref{cor:cumulative-domination}}]
Theorem~\ref{thm:main} at $x=2^m$ gives, beyond a fixed threshold,
\[
 0\leq\sum_{n<2^m}f(n)=F(2^m)-f(2^m)
 \leq K2^{-cm^2}f(2^m).
\]
Increase the threshold to $M$ so that $K2^{-c M^2}<1$.
This proves \eqref{eq:cumulative-domination}.
Since each $f(n)$ is nonnegative, we have $f(n)<f(2^m)$ for $1\leq n<2^m$, as required.
\end{proof}

\end{document}